\documentclass[11pt]{amsart}
\usepackage{amsmath,amsthm,amssymb,url}
\frenchspacing
\addtolength{\textwidth}{2cm}
\addtolength{\hoffset}{-1cm}
\addtolength{\textheight}{2cm}
\addtolength{\voffset}{-1cm}

\usepackage[all]{xy}
\usepackage[pagebackref]{hyperref}

\newtheorem*{theorems}{Theorem}
\newtheorem{theorem}{Theorem}[section]
\newtheorem{lemma}[theorem]{Lemma}
\newtheorem{proposition}[theorem]{Proposition}
\newtheorem{corollary}[theorem]{Corollary}

\theoremstyle{definition}
\newtheorem{definition}[theorem]{Definition}
\newtheorem{example}[theorem]{Example}

\theoremstyle{remark}
\newtheorem{remark}[theorem]{Remark}

\newcommand{\de}{\partial}

\newcommand{\bi}{\boldsymbol{i}}
\newcommand{\bl}{\boldsymbol{l}}

\renewcommand{\bar}{\overline}

\newcommand{\debar}{{\overline{\partial}}}

\newcommand{\sA}{\mathcal{A}}
\newcommand{\Oh}{\mathcal{O}}
\newcommand{\sF}{\mathcal{F}}

\newcommand{\sL}{\mathcal{L}}

\newcommand{\g}{\mathfrak{g}}

\newcommand{\Z}{\mathbb{Z}}
\newcommand{\C}{\mathbb{C}}
\newcommand{\K}{\mathbb{K}}

\newcommand{\Id}{\operatorname{Id}}

\newcommand{\Spec}{\operatorname{Spec}}

\newcommand{\MC}{\operatorname{MC}}
\newcommand{\Def}{\operatorname{Def}}
\newcommand{\Hom}{\operatorname{Hom}}

\newcommand{\Der}{\operatorname{Der}}

\newcommand{\coker}{\operatorname{Coker}}
\newcommand{\tot}{\operatorname{Tot}}

\newcommand{\Art}{\mathbf{Art}}
\newcommand{\Set}{\mathbf{Set}}

 \newcommand{\contr}{{\mspace{1mu}\lrcorner\mspace{1.5mu}}}

\newenvironment{acknowledgement}{\par\addvspace{17pt}\small\rm
\trivlist\item[\hskip\labelsep{\it Acknowledgement.}]}
{\endtrivlist\addvspace{6pt}}

\begin{document}

\title{Deformations and obstructions of  pairs $(X,D)$}

\author{Donatella Iacono}

\address{\newline
Department of Mathematics,\hfill\newline
Imperial College London}
\email{d.iacono@imperial.ac.uk}
\address{\newline Dipartimento di Matematica
\newline  Universit\`a degli Studi di Bari}
\email{donatella.iacono@uniba.it}

\begin{abstract}


We study infinitesimal deformations of pairs  $(X,D)$ with $X$ smooth projective variety and $D$ a smooth or a normal crossing divisor, defined over an  algebraically closed field of  characteristic $0$.
Using the differential graded Lie algebras theory and the Cartan homotopy construction, we are able to prove in a completely algebraic way  the unobstructedness of the deformations of the pair  $(X,D)$  in many cases, e.g., 
whenever $(X,D)$ is a log Calabi-Yau  pair,  in the case of a smooth divisor $D$ in a  Calabi Yau variety $X$ and when $D$ is  a smooth divisor in $|-m K_X|$, for some positive integer $m$.

\end{abstract}

\maketitle

\section*{Introduction}

Let $X$ be a smooth projective variety over an algebraically
closed field $\K$ of characteristic 0. If $X$ has trivial canonical bundle (torsion is enough), then the deformations of  $X$ are unobstructed: this is
the well known Bogomolov-Tian-Todorov theorem. The first proofs of this theorem, due to  Bogolomov in \cite{bogomolov}, Tian \cite{Tian} and Todorov \cite{Todorov}, are trascendental and rely on the underlying differentiable structure of the variety $X$. More algebraic proof,  based on  the $T^1$-lifting theorem and the degeneration of the Hodge-to-de Rham spectral sequence, are due to Ran \cite{zivran}, Kawamata \cite{Kaw1} and Fantechi-Manetti \cite{FM2}.
The Bogomolov-Tian-Todorov  theorem is also a consequence  of the stronger fact that the differential graded Lie algebra associated with the infinitesimal deformations of $X$ is homotopy abelian,  i.e.,
quasi-isomorphic to an abelian differential graded Lie algebra.
For $\K=\mathbb{C}$,  this was first proved in  \cite{GoMil2}, see also \cite{manRENDICONTi}.
For any algebraically closed field $\K$ of characteristic 0, this was proved in a completely algebraic way in \cite{algebraicBTT}, using the degeneration of the Hodge-to-de Rham spectral sequence and the notion of Cartan homotopy.

The aim of this paper is then to extend these techniques to analyse the infinitesimal deformations of pairs; indeed, we prove that the DGLA associated with these deformations   is homotopy abelian in many cases, and hence the deformations are unobstructed.
This extension can be viewed as an application of the Iitaka's philosophy: \lq\lq whenever we have a theorem about non singular complete varieties whose statement is dictated by the behaviour of the regular differentials forms (the canonical bundles), there should exist a corresponding theorem about logarithmic paris (pairs consisting of nonsingular complete varieties and boundary divisors with only normal crossings) whose statement is dictated by the behaviour of the logarithmic forms (the logarithmic canonical bundles) and vice versa\rq\rq \cite[Principle 2-1-4]{matsuki}.

More precisely, let   $X$ be a smooth projective variety, $D$ a smooth divisor and consider the deformations of the pair $(X,D)$, i.e., the deformations of the closed embedding $ j: D \hookrightarrow X$.  As first step, we give an explicit description of a differential graded Lie algebra controlling the deformations of $j$. Namely, let  ${\Theta}_X(-\log D)$ be the sheaf of germs of the tangent vectors to $X$ which are tangent to $D$. Once we fix an open affine cover $\mathcal{U}$ of $X$, the Thom-Whitney construction applied to ${\Theta}_X(-\log D)$ provides a differential graded Lie algebra  $TW( {\Theta}_X(-\log D)(\mathcal{U}))$ controlling  the deformations of $j$ (Theorem \ref{teo. DGLA controlling def of j}).
If the ground field is $\C$, then we can simply take  the DGLA associated with the Dolbeault resolution of ${\Theta}_X(-\log D)$, i.e.,  $A^{0,*}_X({\Theta}_X(-\log D))= \oplus_i \Gamma(X, \sA^{0,i}_X({\Theta}_X(-\log D)))$ (Example \ref{exa DGLA (X,D) on C}).

Then, we provide  a condition that ensures that the DGLA $TW( {\Theta}_X(-\log D)(\mathcal{U}))$ is homotopy abelian.

\begin{theorems}
Let  $X$ be a smooth projective variety  of dimension $n$, defined
over an algebraically closed field of characteristic 0 and  $D\subset X$  a smooth divisor D. If the contraction map
\[
H^*(X,\Theta_X (-\log D))\xrightarrow{\bi}\Hom^*(H^*(X,\Omega^n_X (\log D)),H^*(X,\Omega^{n-1}_X  (\log D)))\]
is injective, then the DGLA $TW( {\Theta}_X(-\log D)(\mathcal{U}))$ is homotopy abelian, for every affine open cover $\mathcal{U}$ of $X$.
\end{theorems}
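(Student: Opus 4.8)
The plan is to transpose the algebraic proof of the Bogomolov--Tian--Todorov theorem from \cite{algebraicBTT} to the logarithmic setting, the only genuinely new ingredient being that the vector fields are required to be tangent to $D$. The geometric heart of the matter is that a section $\xi$ of $\Theta_X(-\log D)$, precisely because it is tangent to $D$, contracts logarithmic forms to logarithmic forms, $\xi\contr\colon \Omega^p_X(\log D)\to\Omega^{p-1}_X(\log D)$, and its Lie derivative $L_\xi=[d,\xi\contr]$ again preserves the logarithmic de Rham complex $\Omega^\bullet_X(\log D)$. So the first step is to record that, at the level of sheaves, contraction defines a Cartan homotopy
\[
\bi\colon \Theta_X(-\log D)\longrightarrow \mathcal{E}nd^{*}_{\K}\big(\Omega^\bullet_X(\log D)\big),
\]
i.e. the operators $\bi_\xi=\xi\contr$ satisfy the Cartan identities $[\bi_a,\bi_b]=0$ and $\bi_{[a,b]}=[\bi_a,\bl_b]$, where $\bl=[d,\bi]$ is the Lie derivative and $d$ the logarithmic de Rham differential. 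The tangency condition is exactly what makes this map well defined, and it is the logarithmic analogue of the ordinary contraction used in \cite{algebraicBTT}.

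Second, I would globalise. Applying the Thom--Whitney functor over the affine cover $\mathcal{U}$ turns the sheaf-level Cartan homotopy into a Cartan homotopy of differential graded Lie algebras
\[
\bi\colon L:=TW(\Theta_X(-\log D)(\mathcal{U}))\longrightarrow M:=\End^{*}\big(TW(\Omega^\bullet_X(\log D)(\mathcal{U}))\big),
\]
with Lie derivative $\bl\colon L\to M$ the induced morphism of DGLAs, the target $TW(\Omega^\bullet_X(\log D)(\mathcal{U}))$ computing the logarithmic de Rham hypercohomology $\mathbb{H}^*(X,\Omega^\bullet_X(\log D))$. The problem thus reduces to proving that this specific $L$ is homotopy abelian.

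Third comes the homological input. I would equip the logarithmic de Rham complex with its Hodge (stupid) filtration $F^p=\Omega^{\ge p}_X(\log D)$; then $d$ preserves each $F^p$ while both $\bi$ and $\bl$ lower it by exactly one, and on the associated graded the de Rham differential vanishes, so that $\bl$ induces the zero map whereas $\bi$ induces the cohomological contraction $H^*(X,\Theta_X(-\log D))\to\Hom^*(H^*(X,\Omega^p_X(\log D)),H^*(X,\Omega^{p-1}_X(\log D)))$. The crucial fact is the $E_1$--degeneration of the logarithmic Hodge--to--de Rham spectral sequence (Deligne, with its purely algebraic Deligne--Illusie proof in characteristic $0$), which forces the Hodge filtration to be strict on hypercohomology and identifies $\mathrm{gr}_F\,\mathbb{H}^*(X,\Omega^\bullet_X(\log D))$ with $\bigoplus_{p}H^*(X,\Omega^p_X(\log D))$. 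Feeding this degeneration into the abstract Cartan--homotopy criterion of \cite{algebraicBTT} shows that $L$ is homotopy abelian as soon as the total contraction
\[
H^*(X,\Theta_X(-\log D))\xrightarrow{\ \bi\ }\bigoplus_{p}\Hom^{*}\big(H^*(X,\Omega^p_X(\log D)),H^*(X,\Omega^{p-1}_X(\log D))\big)
\]
is injective. Since a map into a direct sum is injective as soon as one of its components is, the hypothesis that the single component $p=n$ is injective immediately yields injectivity of the total contraction, completing the argument.

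The main obstacle is the third step, specifically the abstract criterion that upgrades injectivity of the cohomological contraction to homotopy abelianity: it is not enough that the bracket on $H^*(L)$ vanishes (which is automatic from $[\bi_a,\bi_b]=0$), one must kill all higher Massey products of $L$. This is exactly where the $E_1$--degeneration is indispensable, as it guarantees that the Hodge filtration splits the relevant complexes compatibly with $\bi$, so that the Cartan homotopy can be promoted to an $L_\infty$--comparison of $L$ with the abelian object detected by the graded contraction, along the lines of \cite{algebraicBTT}. A secondary technical point to verify carefully is the strict compatibility of $\bi$ and $\bl$ with the logarithmic Hodge filtration --- that tangency to $D$ makes contraction drop the filtration by precisely one --- since it is this strictness, combined with degeneration, that allows injectivity on the single top graded piece to propagate to the full statement.
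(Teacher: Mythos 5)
Your proposal is correct and follows essentially the same route as the paper: the logarithmic contraction as a Cartan homotopy, its Thom--Whitney globalisation over the affine cover, Deligne's $E_1$-degeneration of the logarithmic Hodge-to-de Rham spectral sequence, and the homotopy-fibre criterion of \cite{algebraicBTT} (an $L_\infty$-morphism, injective in cohomology, into the homotopy abelian homotopy fibre of the inclusion of the sub-DGLA of endomorphisms preserving $TW(\Omega^n_X(\log D)(\mathcal{U}))$). The only cosmetic difference is that you phrase the criterion via the full Hodge filtration and then project to the $p=n$ component, whereas the paper works directly with the single subcomplex $\Omega^n_X(\log D)$ and its quotient, which receives $\Omega^{n-1}_X(\log D)$ injectively in cohomology by the same degeneration.
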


As in \cite{algebraicBTT}, we recover this result  using  the power of the  Cartan homotopy construction and the degeneration of the Hodge-to-de Rham spectral sequence associated in this case  with the complex  of  logarithmic differentials $\Omega^\ast_X (\log D)$.

As corollary, we obtain an alternative (algebraic) proof,   that, in the case of a log Calabi-Yau pair (Definition \ref{definiiton log calabi yau}),  the DGLA controlling the infinitesimal deformations of the pair $(X,D)$ is homotopy abelian (Corollary \ref{corollari log calabi yau formal smooth}). In particular, we are able to prove the following result  about smoothness of deformations (Corollary \ref {cor.log calabi yau no obstruction}).

\begin{theorems}
Let  $X$  be a smooth  projective $n$-dimensional variety defined over an algebraically closed field of characteristic 0 and  $D \subset X$ a smooth divisor. If $(X,D)$ is a log Calabi-Yau pair, i.e., the logarithmic canonical bundle $\Omega^n_X (\log D)\cong \Oh(K_X+D)$ is trivial,  then the pair  $(X,D)$ has unobstructed deformations.
\end{theorems}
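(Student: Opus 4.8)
The plan is to derive this from the first stated theorem by verifying its hypothesis in the log Calabi–Yau case, and then translating homotopy abelianness of the controlling DGLA into unobstructedness of the deformation functor. So the final theorem is really a corollary of the machinery already set up: the Thom–Whitney DGLA $TW(\Theta_X(-\log D)(\mathcal{U}))$ controls the deformations of the pair $(X,D)$ (Theorem on the DGLA controlling deformations of $j$), and once this DGLA is homotopy abelian, its deformation functor is smooth, hence the deformations are unobstructed. The crux is thus to check the injectivity of the contraction map
\[
\bi\colon H^*(X,\Theta_X(-\log D))\longrightarrow \Hom^*\bigl(H^*(X,\Omega^n_X(\log D)),H^*(X,\Omega^{n-1}_X(\log D))\bigr)
\]
under the sole assumption that $\Omega^n_X(\log D)\cong\Oh(K_X+D)$ is trivial.

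First I would record the standard identities. Contraction against the top logarithmic form gives a sheaf isomorphism $\Theta_X(-\log D)\otimes\Omega^n_X(\log D)\xrightarrow{\sim}\Omega^{n-1}_X(\log D)$, the logarithmic analogue of the classical isomorphism $\Theta_X\otimes\omega_X\cong\Omega^{n-1}_X$. This is the point where I expect the main obstacle to lie: one must confirm that the contraction map appearing in the theorem is, up to the identification furnished by a trivializing section of $\Omega^n_X(\log D)$, nothing but the map on cohomology induced by this sheaf isomorphism, and that it therefore inherits bijectivity. Concretely, when $\Omega^n_X(\log D)$ is trivial one fixes a nowhere-vanishing section $\omega$, so that contraction $\xi\mapsto\xi\contr\omega$ realizes the isomorphism $\Theta_X(-\log D)\cong\Omega^{n-1}_X(\log D)$ of sheaves.

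Next I would pass to cohomology. Since $\omega$ is a global trivializing section, cup product with its class identifies $H^*(X,\Theta_X(-\log D))$ with a subspace of $\Hom^*$ by sending a class $\xi$ to the homomorphism $\eta\mapsto \pm\,\xi\contr\eta$, where $\eta$ ranges over $H^*(X,\Omega^n_X(\log D))$; evaluating on the generator $[\omega]$ recovers $\xi\contr\omega\in H^*(X,\Omega^{n-1}_X(\log D))$, which already determines $\xi$ because contraction with $\omega$ is the sheaf isomorphism above. Hence the composite that first applies $\bi$ and then evaluates at $[\omega]$ coincides with the isomorphism $H^*(X,\Theta_X(-\log D))\xrightarrow{\sim}H^*(X,\Omega^{n-1}_X(\log D))$, which forces $\bi$ itself to be injective.

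Finally, with the injectivity of $\bi$ established, the first theorem of the excerpt applies verbatim and yields that $TW(\Theta_X(-\log D)(\mathcal{U}))$ is homotopy abelian for every affine open cover $\mathcal{U}$. Invoking the theorem that this DGLA controls the deformations of the embedding $j\colon D\hookrightarrow X$, together with the standard fact that a homotopy abelian DGLA has unobstructed (formally smooth) deformation functor, I conclude that the deformations of the pair $(X,D)$ are unobstructed, which is exactly the assertion. The only genuinely delicate verification is the compatibility of the contraction pairing with the sheaf-level isomorphism under the trivialization, since everything else is a direct application of the already-available results.
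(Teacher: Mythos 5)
Your proposal is correct and follows essentially the same route as the paper: verify the injectivity hypothesis of the main contraction-map theorem by noting that contraction with a trivializing section of $\Omega^n_X(\log D)$ induces the isomorphisms $H^i(X,\Theta_X(-\log D))\cong H^i(X,\Omega^{n-1}_X(\log D))$, then combine homotopy abelianness of $TW(\Theta_X(-\log D)(\mathcal{U}))$ with the fact that this DGLA controls $\Def_{(X,D)}$. The only difference is that you spell out the evaluation-at-$[\omega]$ step in more detail than the paper, which simply cites the cup-product isomorphism.
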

The unobstructedness of the deformations of a log Calabi-Yau pair  $(X,D)$ is also interesting from the point of view of mirror symmetry.
The deformations of the log Calabi Yau pair $(X,D)$ should be mirror to the deformations of the (complexified) symplectic form on the mirror Landau-Ginzburg model. Therefore, these deformations are  also smooth \cite{Auroux, Auroux2,KKP}.

\smallskip

Then, we focus our attention on the deformations of pairs  $(X,D)$, with $D$ is a smooth divisor in a smooth projective Calabi Yau variety $X$. Also in this case, we provide  an alternative (algebraic) proof   that   the DGLA controlling these infinitesimal deformations   is homotopy abelian (Theorem \ref{theorem smoothness of  D inside CY})). We also show the following statement about smoothness of deformations (Corollary \ref{cor. D in calabi yau no obstruction}).

\begin{theorems}
Let  $X$  be a smooth  projective  Calabi Yau variety  defined over an algebraically closed field of characteristic 0 and  $D \subset X$ a smooth divisor. Then, the pair  $(X,D)$ has unobstructed deformations.
  \end{theorems}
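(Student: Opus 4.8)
The plan is to deduce unobstructedness from homotopy abelianity of the controlling DGLA, and to establish the latter through the injectivity criterion above. By Theorem~\ref{teo. DGLA controlling def of j} the deformations of the pair $(X,D)$ are governed by $L=TW(\Theta_X(-\log D)(\mathcal{U}))$, and a homotopy abelian DGLA has smooth, hence unobstructed, deformation functor; so it suffices to prove that $L$ is homotopy abelian. By the injectivity criterion above (the first Theorem) this reduces, in turn, to checking that the contraction map
\[
\bi\colon H^*(X,\Theta_X(-\log D))\longrightarrow \Hom^*\bigl(H^*(X,\Omega^n_X(\log D)),H^*(X,\Omega^{n-1}_X(\log D))\bigr)
\]
is injective. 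This is the whole content of the statement; everything else is formal.

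To exploit the Calabi--Yau hypothesis I would fix a nowhere-vanishing global section $\omega\in H^0(X,\Omega^n_X)$ trivializing $K_X$. Since $\Theta_X(-\log D)=\bigl(\Omega^1_X(\log D)\bigr)^\vee$ and $\Omega^{n-1}_X(\log D)\cong \Theta_X(-\log D)\otimes\Omega^n_X(\log D)$, contraction with $\omega$ (viewed inside $\Omega^n_X(\log D)\cong\Oh(K_X+D)\cong\Oh(D)$ through the inclusion $\Omega^n_X\hookrightarrow\Omega^n_X(\log D)$) identifies $\Theta_X(-\log D)$ with the subsheaf $\Omega^{n-1}_X(\log D)(-D)\subset\Omega^{n-1}_X$. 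Thus the values $\bi(a)(\omega)=a\contr\omega$ are exactly the image of $a$ under the map $H^*(\Omega^{n-1}_X(\log D)(-D))\to H^*(\Omega^{n-1}_X(\log D))$ induced by the canonical section of $\Oh(D)$. In contrast to the log Calabi--Yau case, here $\omega$ vanishes along $D$, so contraction with the single class $\omega$ is \emph{not} injective: its kernel consists precisely of the classes that move $D$ inside the fixed $X$.

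First I would dispose of the classes surviving in $H^*(X,\Theta_X)$. Because $\omega$ is nowhere vanishing, contraction with it is a sheaf isomorphism $\Theta_X\xrightarrow{\sim}\Omega^{n-1}_X$, so the absolute contraction map on $X$ is injective --- this is precisely the Calabi--Yau input of \cite{algebraicBTT}. The degeneration of the Hodge-to-de Rham spectral sequence of $\Omega^\ast_X(\log D)$, the same input used in the first Theorem, makes the residue sequences
\[
0\to H^*(X,\Omega^p_X)\to H^*(X,\Omega^p_X(\log D))\xrightarrow{\mathrm{res}} H^*(D,\Omega^{p-1}_D)\to 0
\]
short exact for all $p$. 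Comparing, via contraction with $\omega$, the absolute and the logarithmic contraction maps along these short exact sequences and the tangent sequence $0\to\Theta_X(-\log D)\to\Theta_X\to N_{D/X}\to 0$, a diagram chase should show that every class with nonzero image in $H^*(X,\Theta_X)$ is already detected by $\bi$.

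The hard part is the remaining boundary classes, namely those in the image of the connecting map $H^{*-1}(X,N_{D/X})\to H^*(X,\Theta_X(-\log D))$, which are invisible to contraction with $\omega$ and must instead be detected by classes $b\in H^{>0}(X,\Omega^n_X(\log D))$ with nontrivial residue along $D$. Here I would use the adjunction isomorphism $N_{D/X}\cong K_D$, a consequence of the triviality of $K_X$, to transport the problem to $D$: the residue of $\bi(a)(b)$ is computed by a contraction pairing on the smooth projective $(n-1)$-fold $D$, whose non-degeneracy in the relevant slot comes from Serre duality on $D$, the dualizing sheaf being exactly $K_D\cong\Omega^{n-1}_D$. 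Reconciling the two residue sequences (for $\Omega^n$ and for $\Omega^{n-1}$) with the tangent sequence, so that these two detection mechanisms assemble into genuine injectivity of the total map $\bi$, is the main technical obstacle; the degeneration of the logarithmic Hodge-to-de Rham spectral sequence is what guarantees that all the connecting maps vanish and the chase closes. Once $\bi$ is injective, the first Theorem yields that $L$ is homotopy abelian, and the unobstructedness of $(X,D)$ follows.
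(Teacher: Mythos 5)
There is a genuine gap. You correctly reduce to showing that the controlling DGLA $TW(\Theta_X(-\log D)(\mathcal{U}))$ is homotopy abelian, but you then try to force the problem through Theorem \ref{thm.maintheorem}, i.e.\ through injectivity of the contraction into $\Hom^*(H^*(X,\Omega^n_X(\log D)),H^*(X,\Omega^{n-1}_X(\log D)))$. You honestly observe that contraction with the single class $\omega$ fails to be injective here (since $\omega$ vanishes along $D$ once viewed in $\Omega^n_X(\log D)\cong\Oh_X(D)$), and you leave the recovery of the missing classes to a diagram chase that you yourself flag as ``the main technical obstacle.'' That chase is never carried out, and a key ingredient you assume is false: the residue sequences do \emph{not} split into short exact sequences $0\to H^*(X,\Omega^p_X)\to H^*(X,\Omega^p_X(\log D))\to H^*(D,\Omega^{p-1}_D)\to 0$ in general. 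For example, for $p=1$ the map $H^1(X,\Omega^1_X)\to H^1(X,\Omega^1_X(\log D))$ kills the class of $D$ (this is the Gysin contribution from $H^0(D)$), so it is not injective whenever that class is nonzero. The degeneration of the logarithmic Hodge-to-de Rham spectral sequence controls the Hodge filtration on $\mathbb{H}^*(X,\Omega^\ast_X(\log D))$; it does not make the connecting maps of the residue sequence vanish. So the argument as written does not close.

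The paper takes a different and cleaner route (Theorem \ref{theorem smoothness of D inside CY}): it does not invoke Theorem \ref{thm.maintheorem} at all, but reruns the Cartan-homotopy construction with the \emph{twisted} logarithmic complex $\Omega^\ast_X(\log D)\otimes\Oh_X(-D)$. Its Hodge-to-de Rham spectral sequence degenerates at $E_1$ by Fujino's theorem (Theorem \ref{teo degen tensor}), and its top graded piece is $\Omega^n_X(\log D)\otimes\Oh_X(-D)\cong\Omega^n_X$, which is trivial by the Calabi--Yau hypothesis. Contraction with the nowhere-vanishing $\omega\in H^0(X,\Omega^n_X)$ then gives honest isomorphisms $H^i(X,\Theta_X(-\log D))\cong H^i(X,\Omega^{n-1}_X(\log D)\otimes\Oh_X(-D))$, and the homotopy-fibre argument goes through verbatim. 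If you want to salvage your approach, the fix is exactly this change of coefficient complex: twist by $\Oh_X(-D)$ so that $\omega$ becomes a trivializing section of the top piece, rather than trying to compensate for its vanishing along $D$ with residue classes.
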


The previous results are  also sketched in \cite{KKP},  see also \cite{zivran,kinosaki}, where the authors work over the field of complex number and make a deep use of transcendental methods.
More precisely, using Dolbeault type complexes, one can construct  a differential Batalin-Vilkovisky algebra such that the associated  DGLA controls the deformation problem (Definition \ref{def dbv}). If the differential Batalin-Vilkovisky algebras has a degeneration property  then the associated DGLA is  homotopy abelian \cite {terilla, KKP, BraunLaza}.
Using our approach and the  powerful notion of  the Cartan homotopy,  we are able to give an alternative    proof  of this result (Theorem  \ref{theorem dbv degener implies homotopy abelian}).

\medskip

In a very recent preprint \cite{Sano},  the $T^1$-lifting theorem is applied  in order to prove the unobstructedness  of the deformations $(X,D)$, for $X$ smooth projective variety and  $D$ a smooth divisor  in $|-m K_X|$, for some positive integers m, under the assumption $H^1(X,\Oh_X)=0$ \cite[Theorem 2.1]{Sano}.
Inspired by this paper,  we also study the infinitesimal deformations of  these pairs $(X,D)$.
Using the cyclic covers of $X$ ramified over $D$, we relate the deformations of the pair $(X,D)$ with the deformations of the pair (ramification divisor, cover) and 
we show that the DGLA associated with the deformations  of the pair $(X,D)$ is homotopy abelian. In particular,
  we  can prove  the following result about smoothness  of deformations (Proposition \ref{proposition D in -mKx smooth pair}).

\begin{theorems}
Let $X$ be a smooth projective variety and $D$ a smooth divisor such that $D \in | -mK_X|$, for some positive integer $m$. Then, the   pair  $(X,D)$ has unobstructed deformations.
\end{theorems}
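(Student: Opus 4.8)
The plan is to realize the deformations of $(X,D)$ as the $\mu_m$-invariant part of the deformations of a smooth cyclic cover, and then to run the Cartan-homotopy argument of the homotopy-abelian criterion stated above in a $\mu_m$-equivariant fashion. Since $D\in|-mK_X|$, the line bundle $L:=\Oh_X(-K_X)$ satisfies $L^{\otimes m}\cong\Oh_X(D)$, and a defining section $s\in H^0(X,L^{\otimes m})$ of $D$ produces the degree-$m$ cyclic cover $\pi\colon Y\to X$, with $Y=\Spec_X\big(\bigoplus_{i=0}^{m-1}L^{-i}\big)$ and algebra structure induced by $s$. Because $D$ is smooth, $Y$ is a smooth projective variety, the reduced ramification divisor $R\subset Y$ is smooth and $\pi$ restricts to an isomorphism $R\xrightarrow{\sim}D$; moreover $\pi^*D=mR$, $\Oh_Y(R)\cong\pi^*L$, and $\mu_m$ acts on $Y$ over $X$ with $Y/\mu_m=X$. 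The essential log-compatibility $\pi^*\Omega^1_X(\log D)\cong\Omega^1_Y(\log R)$ (the cover is log-\'etale) gives dually $\pi^*\Theta_X(-\log D)\cong\Theta_Y(-\log R)$, whence the eigensheaf decompositions $\pi_*\Theta_Y(-\log R)=\bigoplus_{i=0}^{m-1}\Theta_X(-\log D)\otimes L^{-i}$ and $\pi_*\Omega^p_Y(\log R)=\bigoplus_{i=0}^{m-1}\Omega^p_X(\log D)\otimes L^{-i}$, the $\mu_m$-invariants being the $i=0$ summand.

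Next I would fix an affine cover $\mathcal U=\{U_a\}$ of $X$ and its (affine, since $\pi$ is finite) preimage $\mathcal V=\{\pi^{-1}U_a\}$. Using the identifications above together with the exactness of the Thom--Whitney functor and its compatibility with the averaging operator of the finite group $\mu_m$, one obtains $TW(\Theta_X(-\log D)(\mathcal U))\cong TW(\Theta_Y(-\log R)(\mathcal V))^{\mu_m}$. By Theorem~\ref{teo. DGLA controlling def of j} the left-hand side controls the deformations of $(X,D)$, while the right-hand side exhibits this DGLA as the invariant part of the DGLA controlling the deformations of the pair $(Y,R)$; this is the promised relation between the deformations of $(X,D)$ and of the pair (ramification divisor, cover). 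It therefore suffices to prove that this invariant DGLA is homotopy abelian.

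I would then run the proof of the homotopy-abelian criterion $\mu_m$-equivariantly, using for the Cartan homotopy the logarithmic de Rham complex $(\Omega^\ast_Y(\log R),d)$ of the cover, whose Hodge-to-de Rham spectral sequence degenerates at $E_1$ (Deligne: $Y$ smooth projective, $R$ smooth) and does so $\mu_m$-equivariantly. The conclusion of the criterion then holds on the invariant part as soon as the contraction map, restricted to invariant tangent classes,
\[
H^*(X,\Theta_X(-\log D))=H^*(Y,\Theta_Y(-\log R))^{\mu_m}\xrightarrow{\;\bi\;}\Hom^*\big(H^*(Y,\Omega^n_Y(\log R)),\,H^*(Y,\Omega^{n-1}_Y(\log R))\big)
\]
is injective. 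This is the \emph{crux}: although $\Omega^n_X(\log D)\cong\Oh(K_X+D)\cong K_X^{1-m}$ carries no useful section on $X$ itself, the cover supplies the canonical class $\omega_0\in H^0(Y,\Omega^n_Y(\log R))$ sitting in the $\zeta^{m-1}$-eigenspace and corresponding to $1\in H^0(X,\Oh_X)$ under $\Omega^n_X(\log D)\otimes K_X^{m-1}\cong\Oh_X$. Since an invariant vector lies in the $i=0$ eigenspace and $0+(m-1)<m$, contracting with $\omega_0$ produces no wrap-around twist by $s$ in the product $\pi_*\mathcal O_Y$-algebra: it is exactly the pure contraction isomorphism of sheaves $\Theta_X(-\log D)\xrightarrow{\sim}\Omega^{n-1}_X(\log D)\otimes K_X^{m-1}$, hence an isomorphism on cohomology. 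Thus $\bi(\xi)(\omega_0)\neq0$ for every $0\neq\xi$, the required injectivity holds on the invariant part, and the invariant DGLA is homotopy abelian.

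Finally, a homotopy abelian DGLA has formally smooth (in particular unobstructed) deformation functor, so the pair $(X,D)$ has unobstructed deformations. I expect the main obstacle to be Step three: packaging the Cartan-homotopy and $E_1$-degeneration inputs $\mu_m$-equivariantly, and above all recognizing that the trivializing logarithmic volume form exists only upstairs on $Y$ and in the \emph{non-invariant} eigenspace $\zeta^{m-1}$ --- it is precisely the invariance of the tangent classes that keeps the total eigen-degree below $m$ and removes the twist by $s$ that would otherwise destroy injectivity (and indeed makes the full pair $(Y,R)$ fail to be homotopy abelian for $m\ge2$). The secondary technical point is the clean identification of Step two, namely the exactness of $TW$ under taking $\mu_m$-invariants together with the two logarithmic-sheaf isomorphisms.
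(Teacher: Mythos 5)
Your proposal follows essentially the same route as the paper's proof of Proposition~\ref{proposition D in -mKx smooth pair}: the degree-$m$ cyclic cover branched over $D$ built from $L=K_X^{-1}$, the eigensheaf decomposition of $\pi_*$, the logarithmic volume form on $Y$ arising from the trivialization $\Omega^n_X(\log D)\otimes L^{-m+1}\cong\Oh_X$ (hence sitting in the $\zeta^{m-1}$-eigenspace and vanishing to order $m-1$ along the ramification divisor), and the Cartan-homotopy/$E_1$-degeneration criterion run upstairs on $(Y,\Delta)$ with injectivity of the contraction checked only on the image of $H^*(X,\Theta_X(-\log D))$. The only differences are presentational --- the paper composes the injective-in-cohomology DGLA morphism $TW(\Theta_X(-\log D)(\mathcal U))\to TW(\Theta_Y(-\log\Delta)(\mathcal V))$ with the $L_\infty$ map into the homotopy fibre rather than literally identifying the source with the $\mu_m$-invariant sub-DGLA --- though your parenthetical claim that the full pair $(Y,R)$ fails to be homotopy abelian for every $m\ge 2$ is incorrect at least for $m=2$, where $K_Y$ is trivial and the pair $(Y,\Delta)$ falls under the Calabi--Yau case; this aside plays no role in your argument.
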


We refer the reader to \cite{Sano} for   examples in the Fano setting and the relation with  the unobstructedness of weak Fano manifold. 

\smallskip

Once the unobstructedness of a pair  $(X,D)$ is proved, then studying  the forgetting morphism of functors  $\phi: \Def_{(X,D)} \to \Def_X$, one can prove the unobstructedness of $\Def_X$, for instance when $D$ is stable in $X$, i.e.,  $\phi$ is smooth \cite[Definition 3.4.22]{Sernesi}.

\medskip

The paper goes as follows.  With the aim of providing a full introduction to the subject, we include 
Section \ref{section log diff} on the notion of  the logarithmic differentials  and Section \ref{section back ground DGLA} on the DGLAs, Cartan homotopy and cosimplicial constructions, such as the Thom-Whitney DGLA.
In Section \ref{section deformation}, we review the definition of the infinitesimal deformations of the pair $(X,Z)$, for any  closed subscheme  $Z \subset X$  of a smooth variety $X$, describing the DGLA controlling these deformations.
 Section \ref{section obstruction computations} is devoted to the study of obstruction and it contains the proof of the  first three theorems. 
 In  Section \ref{section cyclic cover},  we study   cyclic covers  of a smooth projective variety $X$ ramified on a smooth  divisor $D$  and  we   prove the  last theorem stated above.
 In the last section, we apply the notion of Cartan homotopy construction to the 
 the differential graded Batalin Vilkovisky algebra setting, providing a new proof of the fact that if the differential Batalin-Vilkovisky algebras has a degeneration property  then the associated DGLA is  homotopy abelian (Theorem  \ref{theorem dbv degener implies homotopy abelian}).
 

%

\medskip

\noindent{\bf{Notation.}}
Unless otherwise specified, we work over an algebraically closed field $\K$ of characteristic 0.
Throughout the paper, we also assume that
$X$ is always a  smooth projective variety over $\K$. Actually, the main ingredient of the proofs  is the degeneration at the $E_1$-level of some  Hodge-to-de Rham spectral sequences and it holds whenever X  is smooth proper over a field of characteristic 0 \cite{DI}.

 
By abuse of notation, we denote by $K_X$ both the canonical divisor and the canonical bundle of $X$.
$\Set$  denotes the category of sets (in a fixed universe)
and   $\Art $   the category of local Artinian
$\K$-algebras with residue field $\K$. Unless otherwise specified,
for every  objects  $A\in \mathbf{Art}$, we denote by
$\mathfrak{m}_A$ its maximal ideal.

\medskip


\begin{acknowledgement}
The author wish to thank Richard Thomas for  useful discussions and comments  and for pointing out the papers \cite{kinosaki} and \cite{fujino1}, and Marco Manetti  for drawing  my attention to the paper \cite{Sano} and for useful suggestions and advices, especially on Section 5. In particular, M.M. shared with the author Theorem   \ref{theorem dbv degener implies homotopy abelian}.  I also thank Taro Sano  for comments and for pointing out a mistake in a previous version.
The author is supported by the Marie Curie Intra-European Fellowship  FP7-PEOPLE-2010-IEF Proposal $N^\circ$: 273285.
\end{acknowledgement}

\section{Review of logarithmic differentials}\label{section log diff}

Let $X$ be a  smooth projective variety of dimension $n$ and $j:Z \hookrightarrow X$   a   closed embedding of a closed subscheme $Z$. We denote by ${\Theta}_X(-\log Z)$ the sheaf of germs of the tangent vectors to $X$ which are tangent to $Z$ \cite[Section 3.4.4]{Sernesi}.
Note that, denoting by $\mathcal{I}\subset \mathcal{O}_X$   the ideal sheaf of $Z$ in $X$, then $\Theta_X(-\log  Z)$ is the subsheaf of the derivations of the sheaf $\mathcal{O}_X$ preserving the ideal sheaf $\mathcal{I}$ of $Z$, i.e.,
\[
\Theta_X(-\log  Z)=\{f\in \Der(\mathcal{O}_X,\mathcal{O}_X)\mid f(\mathcal{I}) \subset\mathcal{I}\}.
\] 
\begin{remark}
If  $Z$ is smooth in $X$, then we have the short exact sequence
\[
0 \to {\Theta}_X(-\log Z) \to {\Theta}_X \to N_{Z / X} \to 0.
\]
Note also that  if the codimension of $Z$ is at least 2, then the sheaf $ {\Theta}_X(-\log Z)$ is not locally free, see also Remark \ref{remark T log dual sheaf locally free}.
\end{remark}
Next, assume to be in the divisor setting, i.e., let  $D \subset X$ be a  globally normal crossing divisor in   $X$. With the divisor assumption, we can define the sheaves of logarithimc differentials, see for instance \cite[p. 72]{deligne}, \cite{Kawamata}, \cite[Chapter 2]{librENSview} or \cite[Chapter 8]{voisin}. For any $k \leq n$, we denote by $\Omega^k_X(\log D)$   the locally free sheaf of  differential $k$-forms with logarithmic poles along $D$.
More explicitly, let $\tau: V= X-D \to X$ and $\Omega^k_X(\ast D)= \lim_{\stackrel{\to}{ \nu}}
\Omega^k_X(\nu \cdot  D)=\tau_* \Omega^k_V$. Then, $(\Omega^\ast_X(\ast D),d)$ is a complex and $(\Omega^\ast_X(\log D),d)$ is the subcomplex such that,  for evey
open $U$ in $X$, we have
\[
\Gamma(U,\Omega^k_X(\log D))=\{\alpha \in \Gamma(U,\Omega^k_X(\ast D))\ | \, \alpha \mbox{ and } d \alpha \mbox{ have simple poles along D} \}.
\]

\begin{remark}\label{remark exac sequence Omega(logD)(-D)}
For every $p$,  the following short exact sequence of sheaves  
\[
0\to \Omega^p_X (\log D) \otimes \Oh_X(-D)   \to  
  \Omega^p_X \to   \Omega^p_D \to 0
\]
is exact  \cite[2.3]{librENSview} or \cite[Lemma 4.2.4]{lazar}.
\end{remark}

\begin{example}\cite[Chapter 8]{voisin}
In the holomorphic setting,
$\Omega^k_X(\log D)$ is   the sheaf of   meromorphic differential forms $\omega$ that admit  a pole of order at most 1 along (each component) of $D$, and the same holds for $d\omega$.
Let $z_1,z_2, \ldots, z_n$ be holomorphic coordinates on an open set $U$ of $X$, in which $D \cap U$ is defined by the equation   $ z_1z_2\cdots z_r=0$. Then, $\Omega^k_X(\log D)_{\mid U}$ is a sheaf of free $\Oh_U$-modules, for which $\displaystyle \frac{dz_{i_1}}{z_{i_1}}\wedge \cdots \frac{dz_{i_l}}{z_{i_l}}\wedge dz_{j_1} \wedge \cdots \wedge dz_{j_m} $
with $i_s \leq r$, $j_s >r$ and $l+m=k$ form a basis.
\end{example}

\begin{remark}\label{remark T log dual sheaf locally free}
The sheaves of logarithmic  $k$-forms  $\Omega^k_X(\log D)= \wedge^k \Omega^1_X(\log D)$ are locally free and the sheaf  $\Theta_X (-\log D)$  is dual to the sheaf $\Omega^1_X(\log D)$, so it is in particular locally free for $D$ global normal crossing divisor.
The sheaf of logarithmic $n$-forms  $\Omega^n_X(\log D)\cong \Oh_X(K_X+D)$ is a line bundle called the \emph{logarithmic canonical bundle} for the pair $(X,D)$.

 
\end{remark}

\begin{definition}\label{definiiton log calabi yau}
A \emph{log Calabi-Yau pair} $(X,D)$ is a pair where $D$ is a smooth divisor in a smooth projective variety  $X$ 
of dimension $n$,  and the  logarithmic canonical bundle   $\Omega^{n}_X(\log D)$ is trivial.
\end{definition}

\begin{example}
Let $X$ be a smooth projective variety and $D$ an effective smooth divisor such that $D \in | -K_X|$.
Then, the sheaf $\Omega^n_X(\log D)\cong \Oh_X(K_X+D)$ is trivial, i.e., the pair $(X,D)$ is a log Calabi Yau pair.
\end{example}

The complex  $(\Omega^{\ast}_X(\log D),d)$ is equipped with the Hodge filtration, which induces a filtration on the hypercohomology $\mathbb{H}^{\ast}(X, \Omega^{\ast}_X(\log D)) $. As for the algebraic de Rham complex, the spectral sequence associated with the Hodge filtration on $\Omega^{\ast}_X(\log D)$ has its first term equal to $E_1^{p,q}=H^q(X,\Omega^{p}_X(\log D))$.
The following   degeneration properties hold.

\begin{theorem}\label{teo degen deligne} 
(Deligne) Let $X$ be a smooth proper variety and $D \subset X$ be a  globally normal crossing divisor. Then, the spectral sequence associated with the Hodge filtration
\[
E_{1}^{p,q}=H^q(X,\Omega^{p}_X(\log D))  \Longrightarrow \mathbb{H}^{p+q}( X,\Omega^{\ast}_X(\log D))
\]
degenerates at the $E_1$-level.

\end{theorem}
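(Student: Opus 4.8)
The plan is to establish the $E_1$-degeneration by the purely algebraic method of reduction to positive characteristic, following Deligne--Illusie \cite{DI} in its logarithmic variant, since this is the approach that works over an arbitrary algebraically closed field $\K$ of characteristic $0$ (the transcendental proof via mixed Hodge structures being available only for $\K=\C$). The numerical reformulation is the key: $E_1$-degeneration is equivalent to the equality
\[
\dim_\K\mathbb{H}^k(X,\Omega^{\ast}_X(\log D))=\sum_{p+q=k}\dim_\K H^q(X,\Omega^p_X(\log D))
\]
for every $k$, since the spectral sequence always satisfies $\geq$ and degeneration is exactly the case of equality.

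First I would reduce to characteristic $p$. By a standard spreading-out argument, the pair $(X,D)$ is defined over a finitely generated $\Z$-subalgebra $R\subset\K$, and after inverting finitely many elements of $R$ we may assume that $X_R\to\Spec R$ is smooth proper with $D_R\subset X_R$ a relative normal crossing divisor. The integers $\dim H^q(X,\Omega^p_X(\log D))$ and $\dim\mathbb{H}^k(X,\Omega^{\ast}_X(\log D))$ are computed by coherent cohomology and are constant on a dense open of $\Spec R$ by flatness and semicontinuity; hence it suffices to verify the displayed equality after base change to the residue field $k$ of a closed point whose residue characteristic $p$ exceeds $n=\dim X$. By shrinking $\Spec R$ once more we may also arrange that the fibre $(X_k,D_k)$ lifts to the ring $W_2(k)$ of Witt vectors of length two.

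The heart of the argument is then the decomposition of the logarithmic de Rham complex in characteristic $p$. Writing $F\colon X_k\to X_k'$ for the relative Frobenius and $D'$ for the pullback of the divisor, the logarithmic Cartier isomorphism identifies the cohomology sheaves of the direct image, $C^{-1}\colon\Omega^i_{X_k'}(\log D')\xrightarrow{\ \sim\ }\sH^i(F_*\Omega^{\ast}_{X_k}(\log D))$. Using the chosen $W_2(k)$-lifting of $(X_k,D_k)$, I would construct lifts of the logarithmic Frobenius on the members of an affine cover; the differences of any two such lifts, divided by $p$, produce logarithmic $1$-forms that assemble via a \v{C}ech-type comparison into an isomorphism
\[
\bigoplus_{i<p}\Omega^i_{X_k'}(\log D')[-i]\ \xrightarrow{\ \sim\ }\ F_*\Omega^{\ast}_{X_k}(\log D)
\]
in the derived category of $X_k'$. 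Taking hypercohomology, and using that $n<p$ so that no truncation occurs, splits $\mathbb{H}^k$ as the direct sum $\bigoplus_{p+q=k}H^q(X_k,\Omega^p_{X_k}(\log D))$, which is exactly the desired equality and hence the $E_1$-degeneration.

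I expect the main obstacle to be the gluing step producing the global splitting: the local Frobenius lifts do not patch on the nose, and one must check that the logarithmic structure along $D$ is respected both by the Cartier operator and by the lifting cocycle. This is precisely where the normal-crossing hypothesis and the simultaneous liftability of $D$ with $X$ to $W_2(k)$ are essential, and it is the point at which the argument genuinely diverges from the smooth (non-logarithmic) case treated in \cite{DI}. As a transcendental alternative valid only for $\K=\C$, one would instead invoke Grothendieck's algebraic de Rham theorem to identify $\mathbb{H}^{\ast}(X,\Omega^{\ast}_X(\log D))$ with the singular cohomology $H^{\ast}(X\setminus D,\C)$ and deduce the degeneration from the strictness of the Hodge filtration in Deligne's mixed Hodge structure; there the analogous difficulty is the construction of that mixed Hodge structure, that is, the strictness statement itself.
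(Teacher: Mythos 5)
Your proposal is correct and follows the same route the paper takes: the paper's proof of Theorem \ref{teo degen deligne} consists precisely of a reference to the algebraic argument of Deligne--Illusie in its logarithmic form (see \cite{DI} and \cite[Corollary 10.23]{librENSview}), which is exactly the reduction mod $p$, $W_2$-lifting and logarithmic Cartier/decomposition argument you outline, with the mixed-Hodge-theoretic proof over $\C$ as the transcendental alternative. Your sketch accurately identifies all the essential ingredients (spreading out, $p>\dim X$, lifting of the pair to $W_2(k)$, the \v{C}ech gluing of local Frobenius lifts), so there is nothing to correct.
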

\begin{proof}
This is the analogous of the degeneration of the Hodge-to-de Rham spectral sequence. As in this case, there is a complete algebraic way to prove it, avoiding analytic technique, see
\cite[Section 3]{deligneII},  \cite{DI},  \cite[Corollary 10.23]{librENSview} or  \cite[Theorem 8.35]{voisin}).

\end{proof}

\begin{theorem}\label{teo degen tensor}
 Let $X$ be a smooth proper variety and $D \subset X$ be a  globally normal crossing divisor. Then, the spectral sequence associated with the Hodge filtration
\[
E_{1}^{p,q}=H^q(X,\Omega^{p}_X(\log D) \otimes \Oh_X(-D))  \Longrightarrow 
\mathbb{H}^{p+q}( X,\Omega^{\ast}_X(\log D)  \otimes \Oh_X(-D))
\]
degenerates at the $E_1$-level.

\end{theorem}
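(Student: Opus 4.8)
The plan is to deduce this twisted degeneration from the untwisted one already recorded in Theorem~\ref{teo degen deligne}, using Serre--Grothendieck duality to relate the complex $\Omega^\ast_X(\log D)\otimes\Oh_X(-D)$ to $\Omega^\ast_X(\log D)$. The starting observation is the wedge pairing: since the logarithmic forms are locally free (Remark~\ref{remark T log dual sheaf locally free}), the pairing $\Omega^p_X(\log D)\otimes\Omega^{n-p}_X(\log D)\to\Omega^n_X(\log D)$ is perfect, and twisting by $\Oh_X(-D)$ together with $\Omega^n_X(\log D)\otimes\Oh_X(-D)\cong\Oh_X(K_X)=\omega_X$ yields a perfect pairing
\[
\Omega^p_X(\log D)\otimes\big(\Omega^{n-p}_X(\log D)\otimes\Oh_X(-D)\big)\longrightarrow\omega_X.
\]
Hence $\Omega^{n-p}_X(\log D)\otimes\Oh_X(-D)\cong\mathcal{H}om(\Omega^p_X(\log D),\omega_X)$, so the two families of sheaves occurring in the two $E_1$-pages are Serre dual to one another.

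Next I would pass to cohomology. Serre duality on the smooth proper variety $X$ then gives, for all $p,q$,
\[
H^q(X,\Omega^p_X(\log D)\otimes\Oh_X(-D))\cong H^{n-q}(X,\Omega^{n-p}_X(\log D))^\vee,
\]
so the total dimension of the twisted $E_1$-page equals that of the untwisted one,
\[
\sum_{p,q}\dim H^q(X,\Omega^p_X(\log D)\otimes\Oh_X(-D))=\sum_{p,q}\dim H^q(X,\Omega^p_X(\log D)).
\]
The crucial point is that this duality is compatible with the differential $d$: the wedge pairing upgrades the identification above to an isomorphism of $\Omega^\ast_X(\log D)\otimes\Oh_X(-D)$, up to shift, with the Grothendieck dual complex $R\mathcal{H}om(\Omega^\ast_X(\log D),\omega_X[n])$. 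Granting this, duality for hypercohomology yields $\mathbb{H}^k(X,\Omega^\ast_X(\log D)\otimes\Oh_X(-D))\cong\mathbb{H}^{2n-k}(X,\Omega^\ast_X(\log D))^\vee$, and hence
\[
\sum_k\dim\mathbb{H}^k(X,\Omega^\ast_X(\log D)\otimes\Oh_X(-D))=\sum_k\dim\mathbb{H}^k(X,\Omega^\ast_X(\log D)).
\]

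Finally I would combine everything numerically. For any filtered complex one has $\dim\mathbb{H}^k\le\sum_{p+q=k}\dim E_1^{p,q}$, with equality for all $k$ if and only if the spectral sequence degenerates at $E_1$. For the untwisted complex Theorem~\ref{teo degen deligne} gives this equality, so $\sum_k\dim\mathbb{H}^k=\sum_{p,q}\dim E_1^{p,q}$ there. Chaining the two displayed identities with this equality forces
\[
\sum_k\dim\mathbb{H}^k(X,\Omega^\ast_X(\log D)\otimes\Oh_X(-D))=\sum_{p,q}\dim H^q(X,\Omega^p_X(\log D)\otimes\Oh_X(-D)),
\]
which together with the general inequality is exactly $E_1$-degeneration for the twisted complex. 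The main obstacle is the compatibility step: verifying that the perfect wedge pairing is a genuine duality of complexes, i.e.\ that it intertwines the de Rham differentials correctly (up to the usual sign and shift), rather than being merely a termwise perfect pairing of the graded pieces. Once this is in place the numerical argument is forced. As an alternative that sidesteps the duality bookkeeping, one could instead run the Deligne--Illusie lifting-to-$W_2$ argument directly for the logarithmic complex twisted by $\Oh_X(-D)$, using the twisted logarithmic Cartier isomorphism, as in \cite{DI}.
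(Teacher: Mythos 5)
The paper does not actually prove this statement: it simply cites Fujino (\cite[Section 2.29]{fujino1}, \cite[Section 5.2]{fujino2}), where the degeneration is obtained from the theory of mixed Hodge structures on cohomology with compact support (equivalently, a Deligne--Illusie-type argument for the pair). So your proposal is necessarily a different route, namely a reduction of the twisted degeneration to the untwisted one (Theorem~\ref{teo degen deligne}) by duality. The skeleton of that reduction is sound and is indeed a standard way to prove this: the wedge pairing $\Omega^p_X(\log D)\otimes\Omega^{n-p}_X(\log D)\to\Omega^n_X(\log D)$ is perfect (check it on the local basis $\tfrac{dz_I}{z_I}\wedge dz_J$), twisting by $\Oh_X(-D)$ and using $\Omega^n_X(\log D)\otimes\Oh_X(-D)\cong\Oh_X(K_X)$ gives the Serre-dual identification of the $E_1$-terms, and your dimension-counting criterion for $E_1$-degeneration is correct.

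The gap is exactly where you flag it, and it is a genuine one: the phrase ``isomorphism with the Grothendieck dual complex $R\mathcal{H}om(\Omega^\ast_X(\log D),\omega_X[n])$'' does not parse as stated, because the de Rham differential is not $\Oh_X$-linear, so the logarithmic de Rham complex is not an object of the derived category of $\Oh_X$-modules and coherent Grothendieck--Serre duality cannot be applied to it termwise-and-then-globally without further justification. Over $\C$ one can close the gap transcendentally ($\mathbb{H}^k$ of the twisted complex computes $H^k_c(X\setminus D)$, and topological Poincar\'e duality gives the required equality of total dimensions, after which the Lefschetz principle handles general $\K$), but a cleaner, purely algebraic repair avoids the hypercohomology duality altogether: since $\Omega^a_X(\log D)\otimes\Oh_X(-D)\wedge\Omega^b_X(\log D)\subset\Omega^{a+b}_X$ (Remark~\ref{remark exac sequence Omega(logD)(-D)}), the wedge product is a pairing of filtered complexes into $\Omega^\ast_X$ and hence a pairing of Hodge spectral sequences; on $E_1$ it restricts to the perfect Serre pairing $H^q(\Omega^p_X(\log D)\otimes\Oh_X(-D))\otimes H^{n-q}(\Omega^{n-p}_X(\log D))\to H^n(X,\omega_X)\cong\K$, all higher differentials are adjoint up to sign by the Leibniz rule, and the vanishing of $d_r$ on the untwisted side (Theorem~\ref{teo degen deligne}) and on $\Omega^\ast_X$ then forces $d_r=0$ on the twisted side by perfectness, inductively in $r$. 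With that substitution your argument is complete; your suggested fallback (running Deligne--Illusie for the twisted logarithmic complex) is essentially what the cited references do.
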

\begin{proof}
See \cite[Section 2.29]{fujino1} or \cite[Section 5.2]{fujino2}.
\end{proof}

\section{Background on DGLAs and Cartan Homotopies}\label{section back ground DGLA}

\subsection{DGLA}

A \emph{differential graded Lie algebra}  is
the data of a differential graded vector space  $(L,d)$  together
with a  bilinear map $ [- , - ] \colon L \times L \to L$  (called bracket)
of degree 0, such that the following conditions are satisfied:
\begin{enumerate}

\item (graded  skewsymmetry) $[a,b]=-(-1)^{\bar{a} \; \bar{b}} [b,a]$.

\item (graded Jacobi identity) 
$ [a,[b,c]] = [[a,b],c] + (-1)^{\bar{a}\;\bar{b}} [b, [a,c]]$.

\item (graded Leibniz rule) 
$ d[a,b] =[ da,b]+ (-1)^{\bar{a}}[a, db]$.

\end{enumerate}

In particular, the Leibniz rule implies that the bracket of a DGLA induces
 a structure of graded Lie algebra on its cohomology. 
 Moreover, a DGLA is \emph{abelian} if its bracket is trivial.

A \emph{morphism} of differential graded Lie algebras $\chi \colon L \to M$ is  a linear map
that commutes with brackets and  differentials and preserves degrees.

A \emph{quasi-isomorphism} of DGLAs is a morphism
that induces an isomorphism in cohomology.  Two DGLAs $L$ and $M$ are said to be
\emph{quasi-isomorphic}, or \emph{homotopy equivalent}, if they are equivalent under the
equivalence relation generated by:  $L\sim M$ if there exists
a quasi-isomorphism $\chi\colon L\to M$. 
A DGLA is  \emph{homotopy abelian} if it   is quasi-isomorphic to an abelian  DGLA.

\begin{remark}
The   category ${\bf{DGLA}}$ of DGLAs is too strict for our purpose and we require to enhance
this category allowing $L_{\infty}$ morphisms of DGLAs. Therefore, we work in the category whose objects are DGLAs and whose morphisms are  $L_{\infty}$ morphisms of DGLAs.
 This category is equivalent to the homotopy category of  ${\bf{DGLA}}$, obtained inverting all quasi-isomorphisms.
Using this fact, we do not give the explicit definition of an $L_{\infty}$ morphism of DGLAs: by an $L_{\infty}$ morphism we mean a morphism in this homotopy category (a zig-zag morphism) and we denote it with a dash-arrow. We only  emphasize that an  $L_{\infty}$ morphism of DGLAs has a linear part that is a morphism of complexes and therefore it induces a  morphism in cohomology.  
For   the detailed  descriptions of such structures we refer to
\cite{LadaStas,LadaMarkl,EDF,fukaya,K,getzler,manRENDICONTi,cone,IaconoIMNR}.

\end{remark}

\begin{lemma}\label{lem.criterioquasiabelianita}
Let $f_{\infty}\colon M_1 \dashrightarrow M_2$ be  a $L_{\infty}$ morphism of
DGLAs
with $M_2$ homotopy abelian.  If $f_{\infty}$ induces  an injective morphism in cohomology, then $M_1$ is also homotopy abelian.
\end{lemma}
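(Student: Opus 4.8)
The plan is to reduce everything to the minimal $L_\infty$ models of $M_1$ and $M_2$ and then run an induction on the brackets. First I would invoke the homotopy transfer theorem (available in characteristic $0$, and implicit in the $L_\infty$-formalism recalled in the Remark above) to replace each $M_i$ by an $L_\infty$-structure on its cohomology $H^*(M_i)$, with zero differential, that is $L_\infty$-quasi-isomorphic to $M_i$. Write $\{m_k\}_{k\ge 2}$ for the transferred brackets on $H^*(M_1)$ and $\{n_k\}_{k\ge 2}$ for those on $H^*(M_2)$. I would use the standard characterization that a DGLA is homotopy abelian if and only if all the brackets of its minimal model vanish; thus the hypothesis that $M_2$ is homotopy abelian becomes $n_k=0$ for all $k\ge 2$, and the statement to be proved becomes $m_k=0$ for all $k\ge 2$.

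Next I would transport $f_\infty$ to the minimal models. Since the transfer maps are $L_\infty$-quasi-isomorphisms, hence invertible up to homotopy, composing them with $f_\infty$ produces an $L_\infty$ morphism $g=\{g_k\}\colon (H^*(M_1),\{m_k\})\dashrightarrow (H^*(M_2),\{n_k\})$ whose linear part $g_1$ is identified, through the cohomology isomorphisms induced by the transfer maps, with the map $H^*(f_\infty)$ induced in cohomology by $f_\infty$. In particular $g_1$ is injective by hypothesis.

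Then the heart of the argument is an induction on $k\ge 2$ proving $m_k=0$. The $L_\infty$-morphism equation at order $k$, both algebras carrying zero differential, has the shape
\[
g_1\circ m_k \;+\; \Phi_k\big(\{m_q\}_{1\le q<k},\{g_p\}_{p<k}\big)\;=\;\Psi_k\big(\{n_j\}_{j\ge 1},\{g_p\}_{p\le k}\big),
\]
where every summand of $\Phi_k$ carries a factor $m_q$ with $1\le q<k$, and every summand of $\Psi_k$ carries a factor $n_j$ with $j\ge 1$. On the right-hand side the term with $j=1$ is the (zero) target differential and the terms with $j\ge 2$ vanish because $H^*(M_2)$ is abelian, so $\Psi_k=0$; in $\Phi_k$ the term with $q=1$ is the (zero) source differential and the terms with $2\le q<k$ vanish by the inductive hypothesis, so $\Phi_k=0$. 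Hence $g_1\circ m_k=0$, and injectivity of $g_1$ forces $m_k=0$. The base case $k=2$ is the same computation with $\Phi_2$ reducing to the differential term alone. This finishes the induction: the minimal model of $M_1$ has trivial brackets, so $M_1$ is homotopy abelian.

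The step I expect to be the main obstacle is the careful combinatorial and sign bookkeeping in the $L_\infty$-morphism relations needed to justify the displayed dichotomy, namely that at each order the equation genuinely isolates $g_1\circ m_k$ once the target brackets and the lower source brackets have been killed. Verifying once and for all that the vanishing of the differentials on the minimal models removes every potentially troublesome mixed term, leaving only $g_1\circ m_k$, is the sole delicate point; everything else is formal.
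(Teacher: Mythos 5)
Your argument is correct and coincides with the paper's own proof, which is simply a citation of \cite[Proposition 4.11]{KKP} and \cite[Lemma 1.10]{algebraicBTT}: both references argue exactly as you do, passing to minimal models via homotopy transfer and running the induction on the Taylor coefficients of the $L_\infty$-morphism to isolate $g_1\circ m_k$. The one point worth flagging is that the characterization ``homotopy abelian if and only if all brackets of the minimal model vanish'' relies on the identification of the homotopy category of DGLAs with its $L_\infty$-localization recalled in the paper's Remark; granting that, your induction is complete.
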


\begin{proof}
See \cite[Proposition 4.11]{KKP} or \cite[Lemma 1.10]{algebraicBTT}.
\end{proof}

The \emph{homotopy fibre} of a morphism of DGLA $\chi\colon L\to M$ is the DGLA
\[TW(\chi):=\{(l, m(t,dt)) \in L \times M[t,dt] \ \mid \  m(0,0)=0, \, m(1,0)=\chi(l) \}.
\]

\begin{remark} \label{rem.quasiisoTWcono} 

If $\chi\colon L\to M$ is an injective morphism of DGLAs, 
then its cokernel $M/\chi(L)$ is a differential graded vector space and the map
\[ 
TW(\chi)\to (M/\chi(L))[-1], \qquad (l,p(t)m_0+q(t)dt m_1)
\mapsto 
\left(\int_0^1q(t)dt\right) m_1 \pmod{\chi(L)},
\]
is a surjective quasi-isomorphism.
\end{remark}

\begin{lemma} \label{lem.criterio TW abelian}
Let $\chi \colon L\to M$ be an injective  morphism of differential graded Lie algebras such that: 
$\chi \colon H^*(L)\to H^*(M)$ is injective. Then, the homotopy fibre $TW(\chi)$
is homotopy abelian.
\end{lemma}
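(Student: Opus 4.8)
=== PROOF PROPOSAL ===

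The plan is to analyze the homotopy fibre $TW(\chi)$ via its relationship to the mapping cone, exploiting the injectivity hypothesis on cohomology to control the bracket. First I would use Remark \ref{rem.quasiisoTWcono}: since $\chi$ is injective, the cokernel $M/\chi(L)$ is a well-defined differential graded vector space, and the stated map $TW(\chi) \to (M/\chi(L))[-1]$ is a surjective quasi-isomorphism. This reduces the problem to understanding the bracket on $TW(\chi)$ through this comparison. The key observation is that an element $(l, m(t,dt)) \in TW(\chi)$ that represents a cohomology class maps to its image in $(M/\chi(L))[-1]$, and I want to show that all brackets of such cocycles become trivial up to homotopy.

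The main strategy is as follows. The injectivity of $\chi \colon H^*(L) \to H^*(M)$ forces the long exact sequence in cohomology associated with $0 \to L \to M \to M/\chi(L) \to 0$ to split into short exact sequences
\[
0 \to H^*(L) \xrightarrow{\chi} H^*(M) \to H^*(M/\chi(L)) \to 0,
\]
with no connecting homomorphism. I would then show that the cohomology of $TW(\chi)$ is naturally identified with $H^{*-1}(M/\chi(L))$, which by Remark \ref{rem.quasiisoTWcono} is the shifted cokernel. The crucial point is to verify that the Lie bracket induced on $H^*(TW(\chi))$ vanishes. To do this, I would take two cohomology classes in $TW(\chi)$, represented by pairs $(l_i, m_i(t,dt))$, compute their bracket using the componentwise DGLA structure on $L \times M[t,dt]$, and track what happens under the quasi-isomorphism to $(M/\chi(L))[-1]$. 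Since the target $(M/\chi(L))[-1]$ carries only the zero bracket (it is obtained as a complex, not a DGLA, and the quasi-isomorphism is one of complexes), the induced bracket on cohomology must factor through a trivial structure.

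The hard part will be making precise the claim that homotopy abelianity follows from the vanishing of the bracket \emph{on cohomology} together with the quasi-isomorphism to an abelian object. In general, a trivial bracket on $H^*$ does not imply homotopy abelianity; one genuinely needs an $L_\infty$-level statement. The cleanest route, which I would adopt, is to construct an explicit $L_\infty$ morphism $TW(\chi) \dashrightarrow (M/\chi(L))[-1]$ whose linear part is the surjective quasi-isomorphism of Remark \ref{rem.quasiisoTWcono}, and then invoke Lemma \ref{lem.criterioquasiabelianita} with $M_2 = (M/\chi(L))[-1]$ taken as an abelian DGLA. Since the linear part is already a quasi-isomorphism (hence injective on cohomology), and the target is abelian, the lemma immediately yields that $TW(\chi)$ is homotopy abelian. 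Thus the real content is establishing that the comparison map of Remark \ref{rem.quasiisoTWcono} upgrades to, or is compatible with, an $L_\infty$ morphism into the abelian DGLA structure on the shifted cokernel; the injectivity hypothesis $\chi \colon H^*(L) \to H^*(M)$ is precisely what guarantees this comparison is well-behaved and that no obstruction to the abelian structure survives.
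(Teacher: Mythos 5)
Your proposal has a genuine gap at its central step, and as written the argument is circular. After correctly reducing, via Remark \ref{rem.quasiisoTWcono}, to the comparison of $TW(\chi)$ with $(M/\chi(L))[-1]$, you propose to conclude by ``constructing an explicit $L_\infty$ morphism $TW(\chi)\dashrightarrow (M/\chi(L))[-1]$ whose linear part is the surjective quasi-isomorphism'' and then invoking Lemma \ref{lem.criterioquasiabelianita} with the target regarded as an abelian DGLA. But an $L_\infty$ morphism to an abelian DGLA whose linear part is a quasi-isomorphism is, essentially by definition, exactly what it means for $TW(\chi)$ to be homotopy abelian: you have restated the lemma, not proved it. No construction of the higher components of such a morphism is offered, and no argument is given that the obstructions to upgrading the chain-level map of Remark \ref{rem.quasiisoTWcono} to an $L_\infty$ morphism vanish. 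The intermediate claim that ``the induced bracket on cohomology must factor through a trivial structure'' because the target of the chain-level quasi-isomorphism carries no bracket is a fallacy: every DGLA is quasi-isomorphic as a complex to its cohomology with zero differential and no bracket, and this says nothing about the induced Lie bracket on cohomology, let alone about homotopy abelianity.

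A telling symptom is that the hypothesis that $\chi\colon H^*(L)\to H^*(M)$ is injective is never genuinely used: you note that it splits the long exact sequence of $0\to L\to M\to M/\chi(L)\to 0$, but that observation feeds into nothing downstream. Remark \ref{rem.quasiisoTWcono} needs only chain-level injectivity of $\chi$, so your argument, were it valid, would show that the homotopy fibre of an arbitrary injective DGLA morphism is homotopy abelian. This is false: homotopy fibres of inclusions of the type $\Theta_X(-\log Z)\hookrightarrow\Theta_X$ control embedded deformations (Hilbert-scheme-type functors), which are obstructed in general --- controlling exactly when they are not is the point of the semiregularity results the paper builds on. The paper itself does not reprove the lemma but defers to \cite[Proposition 3.4]{algebraicBTT} and \cite[Lemma 2.1]{semireg}; in those proofs the cohomological injectivity enters in an essential way to produce the required $L_\infty$ trivialization, and that construction is precisely the content your proposal leaves blank.
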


\begin{proof}  \cite[Proposition 3.4]{algebraicBTT} or \cite[Lemma 2.1]{semireg}.

\end{proof}

\begin{example}
\cite[Example 3.5]{algebraicBTT}
 Let $W$ be a differential graded vector space and let $U \subset W$
be a differential graded subspace. If  the induced  morphism in cohomology
 $H^*(U)\to H^*(W)$ is  injective, then the inclusion  of DGLAs
\[
\chi \colon \{f\in \Hom^*_{\K}(W,W) \mid f(U) \subset U\} \to \Hom^*_{\K}(W,W)
\]
satisfies  the hypothesis of Lemma~\ref{lem.criterio TW abelian} and so the DGLA $TW(\chi)$ is   homotopy abelian.
\end{example}

\subsection{Cartan homotopies}\label{Section cartan homoto}

Let $L$ and $M$ be two differential graded Lie algebras. A \emph{Cartan homotopy} is a linear map of degree $-1$
\[ \bi \colon L \to M  \]
such that,  for every $a,b\in L$, we have:
\[ \bi_{[a,b]}=[\bi_a,d_M\bi_b] \qquad  \text{and }   \qquad [\bi_a,\bi_{b}] =0.\]

For every Cartan homotopy $\bi$, it is defined the Lie derivative  map
\[ \bl \colon L\to M,\qquad
\bl_a=d_M\bi_a+\bi_{d_L a}.
\]
It follows from the definiton of  a Cartan homotopy $\bi$ that  $\bl$ is a morphism of DGLAs. 
Therefore, the conditions    of Cartan homotopy become
\[\bi_{[a,b]}=[\bi_a,\bl_b] \qquad \text{and } \qquad [\bi_a,\bi_{b}]=0.\]
Note that, as a morphism of complexes, $\bl$ is homotopic to 0 (with homotopy $\bi$).

\begin{example}\label{exam.cartan su ogni aperto}

Let $X$ be a smooth algebraic  variety. Denote by $\Theta_X$ the tangent
sheaf and by $(\Omega^{\ast}_X,d)$ the algebraic de Rham complex.
Then, for every open subset $U \subset X$,  the contraction of a vector space with a differential form
\[ 
\Theta_X(U) \otimes \Omega^k_X(U) \xrightarrow{\quad\contr\quad}
 \Omega^{k-1}_X(U)
\]
induces a linear  map of degree $-1$
\[
\bi \colon  \Theta_X(U) \to  \Hom^*(\Omega^{*}_X(U),
\Omega^{*}_X(U)), \qquad  \bi_{\xi} (\omega) = \xi \contr\omega
\]
that  is a Cartan homotopy. Indeed, the above conditions coincide 
with the classical Cartan's homotopy formulas. 
\end{example}

We are interested in the logarithmic generalization of the previous example.

\begin{example}\label{exam.cartan relativo su ogni aperto}

Let $X$ be a smooth algebraic  variety and $D$ a  normal crossing divisor.
Let $(\Omega^{\ast}_X(\log D),d)$  be the logarithmic differential complex and $\Theta_X(-\log  D)$   the subsheaf  of the tangent sheaf  $\Theta_X$  of the derivations  that preserve the ideal sheaf of $D$ as in the previous section.
It is easy to prove explicitly that for every open subset $U\subset X$, we have
\[(\, \Theta_X(-\log  D)(U)\ \contr\ \Omega^k_X(\log D)(U)\,) \subset
\Omega^{k-1}_X(\log D)(U).\]
Then, as above, the induced linear map of degree $-1$
\[
\bi\colon \Theta_X(-\log  D)(U)\to \Hom^*(\Omega^{*}_X(\log  D)(U),
\Omega^{*}_X(\log  D)(U)),\qquad \bi_{\xi}(\omega)=\xi\contr\omega
\]
is a Cartan homotopy.
\end{example}

\begin{lemma} \label{lem.cartan induce morfismo TW}
Let $L,M$ be DGLAs and $\bi \colon L\to M$   a Cartan homotopy. Let $N\subset M$ be a differential graded Lie subalgebra such that $\bl(L)\subset N$ and
\[ TW(\chi)= \{(x,y(t))\in N\times M[t,dt]\mid y(0)=0,\; y(1)=x\}\]
the homotopy fibre of the inclusion $\chi \colon N\hookrightarrow M$.  Then, it is well defined an $L_{\infty}$ morphism $L\stackrel{(\bl,\bi)}{\dashrightarrow }TW(\chi)$.
%

 \end{lemma}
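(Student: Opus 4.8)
The plan is to build the $L_\infty$ morphism explicitly from the data of the Cartan homotopy $\bi$ and its associated Lie derivative $\bl$, exploiting the very definition of the homotopy fibre $TW(\chi)$ as pairs $(x,y(t))$ with $y(t)\in M[t,dt]$ satisfying the boundary conditions $y(0)=0$ and $y(1)=x$. The natural candidate is to send $a\in L$ to the pair $(\bl_a, t\,\bl_a + dt\,\bi_a)$, where the second component interpolates linearly in $t$ between $0$ at $t=0$ and $\bl_a$ at $t=1$. First I would check that this is even well defined as a map into $TW(\chi)$: the first component $\bl_a$ lands in $N$ because $\bl(L)\subset N$ by hypothesis, and the path $y(t)=t\,\bl_a+dt\,\bi_a$ manifestly satisfies $y(0)=0$ and $y(1)=\bl_a$, so it lies in $M[t,dt]$ with the correct endpoints.

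Next I would address what must actually be verified. The pair $(\bl,\bi)$ gives the linear (first-order) part of the $L_\infty$ morphism, and I would need to confirm that this linear part is a morphism of complexes from $L$ to $TW(\chi)$, which reduces to the relation $\bl_a=d_M\bi_a+\bi_{d_La}$ together with the fact that $\bl$ is a chain map (itself a consequence of the Cartan conditions). The heart of the matter, however, is to produce the higher components so that the collection assembles into a genuine $L_\infty$ morphism, i.e. the sequence of Taylor coefficients must satisfy the $L_\infty$ relations. Here I would lean on the two Cartan identities $\bi_{[a,b]}=[\bi_a,\bl_b]$ and $[\bi_a,\bi_b]=0$: these are precisely the quadratic constraints that make the bracket-incompatibility of $\bi$ controllable, and they should force all higher obstructions to vanish or to be expressible through $\bi$ and $\bl$ alone. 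Concretely, the second component of the morphism (the bilinear term) should be assembled from the failure of $\bi$ to be a Lie morphism, packaged inside the polynomial ring $M[t,dt]$ by integrating against the coordinate $t$.

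The main obstacle I expect is the bookkeeping of the $L_\infty$ structure equations rather than any conceptual difficulty: one must exhibit all higher Taylor coefficients of the morphism and verify, degree by degree, that the $L_\infty$ compatibility holds, and this is where the full strength of both Cartan conditions (not merely the first) enters. A clean way to organize this is to recall that $TW(\chi)$ is, up to the quasi-isomorphism of Remark \ref{rem.quasiisoTWcono}, equivalent to the shifted cokernel $(M/\chi(N))[-1]$; transporting the construction through this quasi-isomorphism may reduce the $L_\infty$ data to something more transparent, since $\bl$ becomes homotopically trivial in the quotient (with homotopy $\bi$, as noted after the definition of the Lie derivative). I would then invoke the general machinery of Cartan homotopies inducing maps to mapping cones — this is essentially the content cited from \cite{cone, semireg, algebraicBTT} — to conclude that the assignment $a\mapsto(\bl_a,t\,\bl_a+dt\,\bi_a)$ extends canonically to the desired $L_\infty$ morphism $L\stackrel{(\bl,\bi)}{\dashrightarrow}TW(\chi)$, with the Cartan identities furnishing exactly the coherence needed at each order.
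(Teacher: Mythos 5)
Your proposal is correct and follows essentially the same route as the paper: the paper's proof likewise just records the linear part $a\mapsto(\bl_a,\,t\,\bl_a+dt\,\bi_a)$, observes that $\bl(L)\subset N$ makes this land in $TW(\chi)$, and defers the construction of the higher Taylor coefficients to the explicit description in the cited reference \cite[Corollary 7.5]{semireg}. Your additional remarks on why the two Cartan identities supply the needed coherence are consistent with that reference, so nothing further is required.
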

 
\begin{proof}
 See \cite[Corollary 7.5]{semireg} for an explicit description of this morphism. We only note that the linear part, i.e., the induced morphism of complexes, is given by $(\bl,\bi)(a):= (\bl_a,t\bl_a+dt\bi_a)$, for any $a \in L$.
\end{proof}

\subsection{Simplicial objects and Cartan homotpies}

Let $\mathbf{\Delta}_{\operatorname{mon}}$ be  the category whose objects are finite
ordinal sets and whose morphisms are order-preserving injective
maps between them.  
A \emph{semicosimplicial differential graded Lie algebra} is a
covariant functor $\mathbf{\Delta}_{\operatorname{mon}}\to
\mathbf{DGLA}$. Equivalently, a
semicosimplicial DGLA ${\mathfrak g}^\Delta$ is a diagram
 \[
\xymatrix{ {{\mathfrak g}_0}
\ar@<2pt>[r]\ar@<-2pt>[r] & { {\mathfrak g}_1}
      \ar@<4pt>[r] \ar[r] \ar@<-4pt>[r] & { {\mathfrak g}_2}
\ar@<6pt>[r] \ar@<2pt>[r] \ar@<-2pt>[r] \ar@<-6pt>[r]&
\cdots},
\]
where each  ${\mathfrak g}_i$ is a DGLA, and for each
$ i > 0 $, there are $ i + 1$ morphisms of DGLAs
\[
\partial_{k,i} \colon  {\mathfrak g}_{i-1}\to {\mathfrak
g}_{i},
\qquad k=0,\dots,i,
\]
such that $\partial_{ k+1, i+1} \partial_{l , i}= \partial_{l,i+1}\partial_{k,i}$,
for any  $k\geq l$.

In a semicosimplicial DGLA  ${\mathfrak g}^\Delta$, the maps
\[
\partial_i=\partial_{0,i}-\partial_{1,i}+\cdots+(-1)^{i}
\partial_{i,i}
\]
endow the vector space $\prod_i{\mathfrak g}_i$ with the
structure of a differential complex. Moreover, being a
DGLA, each ${\mathfrak g}_i$ is in particular a differential
complex; since the maps $\partial_{k,i}$ are morphisms of DGLAs,
the space $
{\mathfrak g}^\bullet_\bullet$
has a natural bicomplex structure. We emphasise  that the associated total complex
\[({\rm Tot}({\mathfrak g}^\Delta),d_{\tot})\quad\text{where}\quad
{\rm Tot}({\mathfrak g}^\Delta)=\prod_{i}{\mathfrak
g}_i[-i],\quad d_{\tot}=\sum_{i,j}\partial_i+(-1)^jd_j\] has no
natural DGLA structure.
However,  there is another bicomplex  naturally associated with a semicosimplicial DGLA, whose total complex is naturally a
DGLA.

For every $n\ge 0$, let
 $(A_{PL})_n$ be the differential graded commutative algebra
of polynomial differential forms on the standard $n$-simplex
$\{(t_0,\ldots,t_n) \in \K^{n+1}\mid \sum t_i=1\}$ \cite{FHT}:
\[ (A_{PL})_n = \frac{\K[t_0,\ldots,t_n,dt_0,\ldots,dt_n]}
{(1-\sum t_i, \sum dt_i)}.\]
Denote
by $\delta^{k,n} \colon (A_{PL})_n \to (A_{PL})_{n-1}$, $k = 0,\ldots,n$,
the face maps; then, there are well-defined
morphisms of   differential graded vector spaces
\[
 \delta^{k} \otimes Id \colon  (A_{PL})_{n} \otimes  \mathfrak{g}_n  \to (A_{PL})_{n-1} \otimes
 \mathfrak{g}_n,\]
\[Id \otimes \partial_{k} \colon
(A_{PL})_{n-1}   \otimes  \mathfrak{g}_{n-1}   \to (A_{PL})_{n-1} \otimes  \mathfrak{g}_{n},
\]
for every $0\le k\le n$.
The Thom-Whitney bicomplex is then defined as
\[
C^{i,j}_{TW}(\mathfrak{g}^\Delta)
=\{ (x_n)_{n\in {\mathbb
N}}\in \prod_n (A_{PL})_n^i\otimes {\mathfrak g}_n^j
\mid ( \delta^{k} \otimes Id)x_n=
(Id \otimes \partial_{k})x_{n-1},\; \forall\; 0\le k\le n\},
\]
where  $(A_{PL})_n^i$  denotes the degree $i$ component of $(A_{PL})_n$.
Its total complex is denoted by $( {TW}(\mathfrak{g}^\Delta), d_{TW})$ and it 
is a DGLA, called the  \emph{Thom-Whitney} DGLA.
Note  that the integration maps
\[ \int_{\Delta^n}\otimes \operatorname{Id}\colon (A_{PL})_{n}\otimes
 {\mathfrak g}_n\to {\mathbb K}[n]\otimes  {\mathfrak g}_n= {\mathfrak g}_n[n]\]
 give a quasi-isomorphism
of differential graded vector spaces
\[
I\colon ( TW( {\mathfrak g}^\Delta), d_{TW})\to
({\tot}( {\mathfrak g}^\Delta),d_{\tot}).
\]
For more details, we refer the reader to \cite{whitney,navarro,getzler,cone,chenggetzler}.

\begin{remark}
For any  semicosimplicial DGLA ${\mathfrak g}^\Delta$, we have just defined the Thom-Whitney DGLA. Therefore, using the Maurer-Cartan equation, 
 we can associate with any ${\mathfrak g}^\Delta$ a deformation functor,   namely 
\[
\Def_{{TW}(\g^{\Delta})}: \Art \to \Set,
\]
\[
\Def_{{TW}(\g^{\Delta})}(A)=\frac{\MC_{{TW}(\g^{\Delta})}(A)}{\text{gauge}}=\frac{\{ x \in {{TW}(\g^{\Delta})}^1\otimes \mathfrak{m}_A \ |\ dx+
\displaystyle\frac{1}{2} [x,x]=0 \}}{\exp({{TW}(\g^{\Delta})}^0\otimes \mathfrak{m}_A )  }.
\]
In particular,   the tangent space to $\Def_{{TW}(\g^{\Delta})}$ is
\[
T\Def_{{TW}(\g^{\Delta})}:=\Def_{{TW}(\g^{\Delta})}( \K[\epsilon]/  \epsilon^2 )  \cong H^1({TW}(\g^{\Delta}))\cong H^1({\tot}(\g^{\Delta}))
\]
and obstructions are contained in
\[
H^2({TW}(\g^{\Delta}))\cong H^2({\tot}(\g^{\Delta})).
\]
\end{remark}

\begin{example}\label{ex.cech semicosimplicial}
Let $\sL$ be a  sheaf of differential graded vector spaces over an algebraic variety $X$ and  $\mathcal{U}=\{U_i\}$  an  open cover of $X$; assume that the set of indices $i$ is totally ordered. We can then  define
the semicosimplicial DG vector space of  \v{C}ech  cochains of $\sL$ with respect to the cover $\mathcal{U}$:
\[ \sL(\mathcal{U}):\quad \xymatrix{ {\prod_i\mathcal{L}(U_i)}
\ar@<2pt>[r]\ar@<-2pt>[r] & { \prod_{i<j}\mathcal{L}(U_{ij})}
      \ar@<4pt>[r] \ar[r] \ar@<-4pt>[r] &
      {\prod_{i<j<k}\mathcal{L}(U_{ijk})}
\ar@<6pt>[r] \ar@<2pt>[r] \ar@<-2pt>[r] \ar@<-6pt>[r]& \cdots},\]
where the coface maps   $  \displaystyle \partial_{h}\colon
{\prod_{i_0<\cdots <i_{k-1}} \sL(U_{i_0 \cdots  i_{k-1}})}\to
{\prod_{i_0<\cdots <i_k} \sL(U_{i_0 \cdots  i_k})}$
are given by
\[\partial_{h}(x)_{i_0 \ldots i_{k}}={x_{i_0 \ldots
\widehat{i_h} \ldots i_{k}}}_{|U_{i_0 \cdots  i_k}},\qquad
\text{for }h=0,\ldots, k.\]

The total complex $\tot(\sL(\mathcal{U}))$ is the associated \v{C}ech complex $C^*(\mathcal{U},\sL)$ and we denote by $TW(\sL(\mathcal{U}))$ the associated Thom-Whitney complex. The integration map
$TW(\sL(\mathcal{U}))\to C^*(\mathcal{U},\sL)$ is a surjective quasi-isomorphism.
If $\sL$ is a quasicoherent DG-sheaf and every $U_i$ is affine, then the cohomology of  $TW(\sL(\mathcal{U}))$ is the same of the cohomology of $\sL$.

\end{example}

\begin{example}\label{example.funtore se DGLA sono LIE} \cite{FIM,FMM}
If each $\g_i$ is concentrated in degree zero, i.e.,
$\g^\Delta$ is a semicosimplicial  Lie
algebra,  then the functor $\Def_{ TW(\g^{\Delta})} $ has another explicit description; namely, it is isomorphic to the following functor:
\[
H^1_{\rm sc}(\exp \g^\Delta): \Art \to \Set
\]
\[
H ^1_{sc}(\exp { \g}^\Delta )(A)=\frac{\{ x \in {\g}_1 \otimes
\mathfrak{m}_A   \ |\  e^{\de_{0}x}e^{-\de_{1}x}e^{\de_{2}x}=1
\}}{\sim},
\]
where $x \sim y$ if and only if there exists
$a\in {\g}_0\otimes\mathfrak{m}_A$, such that
$e^{-\de_{1}a}e^{x}e^{\de_{0}a}=e^y$.

\medskip

In particular, let  $Z \subset X$ be  a closed subscheme of a smooth variety $X$, $\mathcal{U}=\{U_i\}$   an open affine cover of $X$ and  consider 
 $\g^\Delta = {TW( {\Theta}_X(-\log Z)(\mathcal{U}))}$. Then, for every  $A \in \Art$, we have
\[
\Def_{TW(\g^\Delta  )}(A)\cong\frac{\{ \{x_{ij}\} \in \prod_{i<j} {\Theta}_X(-\log Z)(U_{ij})
 \otimes
\mathfrak{m}_A   \ |\  e^{x_{jk}} e^{-x_{ik}} e^{x_{ij}}=1
\}}{\sim},
\]
where $x \sim y$ if and only if there exists
$\{a_i\}_i \in \prod_i{\Theta}_X(-\log Z)(U_{i })\otimes\mathfrak{m}_A$, such that $e^{-a_i}e^{x_{ij}}e^{a_j}=e^{y_{ij}}$ \cite[Theorem 4.1]{FMM}.

\end{example}

\bigskip

The notion of Cartan homotopy is related to the notion of calculus and it can be extended to the semicosimplicial setting.

\begin{definition}\label{def.contraction} \cite{TT05,semireg}
Let $L$ be a differential graded Lie algebra and $V$ a differential
 graded vector space. A bilinear map
\[ 
L \times V \xrightarrow{\quad\contr\quad} 
V
\]
of degree $-1$ is called a \emph{calculus} if the induced map
\[ \bi \colon L \to \Hom^*_{\K}(V,V), \qquad  \bi_l(v) = l\contr v,\]
is a Cartan homotopy.

\end{definition}

\begin{definition}\label{def.contractioncosimpl}
Let $\mathfrak{g}^\Delta$ be a semicosimplicial DGLA and
$V^\Delta$ a semicosimplicial differential graded vector space.
A \emph{semicosimplicial Lie-calculus}
\[ \mathfrak{g}^\Delta\times V^\Delta\xrightarrow{\;\contr\;} V^\Delta,\]
is a sequence of calculi $\mathfrak{g}_n\times V_n\xrightarrow{\;
 \contr\;} V_n$, $n\ge 0$,
commuting with coface maps, i.e., $\de_k(l\contr v)=\de_k(l)\contr
\de_k(v)$, for every $k$.
\end{definition}

\begin{lemma}\label{lem.TWforcontractions}
Every semicosimplicial calculus
\[ \mathfrak{g}^\Delta\times V^\Delta\xrightarrow{\;\contr\;} V^\Delta\]
extends  naturally to a calculus
\[  {TW}(\mathfrak{g}^\Delta)\times  {TW}(V^\Delta)
\xrightarrow{\;\contr\;}  {TW}(V^\Delta).\]
Therefore, the induced map
\[ \bi\colon  {TW}(\mathfrak{g}^\Delta)
 \to \Hom^*_{\K}({TW}(V^\Delta),{TW}(V^\Delta)) \]
is a Cartan homotopy.
\end{lemma}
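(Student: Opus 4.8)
My plan is to build the contraction one simplicial level at a time, using that each $(A_{PL})_n$ is a graded-commutative differential graded algebra, and then to check that the construction is compatible with the face maps so that it restricts to the subspaces of Thom-Whitney cochains. The structural fact I would lean on throughout is that $TW(\mathfrak{g}^\Delta)$ and $TW(V^\Delta)$ are subspaces of the products $\prod_n (A_{PL})_n\otimes\mathfrak{g}_n$ and $\prod_n (A_{PL})_n\otimes V_n$, on which the DGLA bracket, the differentials, and hence the Hom-complex differential on $\Hom^*_{\K}(TW(V^\Delta),TW(V^\Delta))$, are all induced componentwise from the tensor-product structures; the entire simplicial combinatorics lives in the compatibility condition, not in the differentials. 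Consequently every identity to be proved reduces to a statement at a single level $n$ together with a compatibility check across levels.

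For the level-$n$ construction I would set $R=(A_{PL})_n$ and first prove a base-change statement: \emph{if $R$ is a graded-commutative DGA and $\phi\colon L\to M$ is a Cartan homotopy of DGLAs, then $\Id_R\otimes\phi\colon R\otimes L\to R\otimes M$, with the Koszul sign of the degree $-1$ map $\phi$, is again a Cartan homotopy}, where $R\otimes L$ and $R\otimes M$ carry their standard tensor DGLA structures. Composing this with the canonical DGLA morphism $R\otimes\Hom^*_{\K}(V_n,V_n)\to\Hom^*_{\K}(R\otimes V_n,R\otimes V_n)$ — and using that post-composition with a DGLA morphism preserves Cartan homotopies, since the two Cartan identities involve only the target bracket and differential — yields that the level-$n$ contraction
\[
\bi\colon (A_{PL})_n\otimes\mathfrak{g}_n \longrightarrow \Hom^*_{\K}\bigl((A_{PL})_n\otimes V_n,\, (A_{PL})_n\otimes V_n\bigr),\qquad \bi_{\alpha\otimes l}(\beta\otimes v)=\pm\,(\alpha\wedge\beta)\otimes(l\contr v),
\]
is a Cartan homotopy, where $\pm$ is the Koszul sign and $l\contr v$ is the given level-$n$ calculus.

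Next I would show that contracting two families of Thom-Whitney cochains produces a Thom-Whitney cochain. Given $(x_n)\in TW(\mathfrak{g}^\Delta)$ and $(v_n)\in TW(V^\Delta)$, set $w_n:=\bi_{x_n}(v_n)$; I must check $(\delta^{k}\otimes\Id)w_n=(\Id\otimes\partial_k)w_{n-1}$ for all $k$. Since each face map $\delta^{k}\colon(A_{PL})_n\to(A_{PL})_{n-1}$ is a morphism of algebras it commutes with $\wedge$, so $(\delta^{k}\otimes\Id)w_n$ equals the contraction of $(\delta^{k}\otimes\Id)x_n$ with $(\delta^{k}\otimes\Id)v_n$ (the level-$n$ calculus on $\mathfrak{g}_n\times V_n$ being untouched); the Thom-Whitney compatibility of $(x_n)$ and $(v_n)$ rewrites this as the contraction of $(\Id\otimes\partial_k)x_{n-1}$ with $(\Id\otimes\partial_k)v_{n-1}$; and finally the defining property of a semicosimplicial calculus, $\partial_k(l\contr v)=\partial_k l\contr\partial_k v$, identifies it with $(\Id\otimes\partial_k)w_{n-1}$. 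Hence the contraction descends to a degree $-1$ bilinear map $TW(\mathfrak{g}^\Delta)\times TW(V^\Delta)\to TW(V^\Delta)$, and the operators $\bi_{x}$ for $x\in TW(\mathfrak{g}^\Delta)$ are the restrictions to the Thom-Whitney subspace of the componentwise operators $\prod_n\bi_{x_n}$.

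It then remains to deduce the two Cartan identities for the global map $\bi\colon TW(\mathfrak{g}^\Delta)\to\Hom^*_{\K}(TW(V^\Delta),TW(V^\Delta))$; by Definition~\ref{def.contraction} this is exactly the assertion that the extended pairing is a calculus and that $\bi$ is a Cartan homotopy. Because the bracket on $TW(\mathfrak{g}^\Delta)$, the differential on $TW(V^\Delta)$, and composition of the operators $\bi_x$ are all computed componentwise on the subspace, the identities $\bi_{[x,y]}=[\bi_x,d\bi_y]$ and $[\bi_x,\bi_y]=0$ follow by restricting the corresponding level-$n$ identities established above. I expect the one genuinely delicate point to be the base-change statement: it is a sign-sensitive calculation in which one must fix a single Koszul convention for the degree $-1$ operator $\phi$ and verify that both quadratic identities close up — the condition $[\phi_a,\phi_b]=0$ extending because $[\,r\otimes\phi_a,\,s\otimes\phi_b\,]=\pm(rs)\otimes[\phi_a,\phi_b]=0$, and $\phi_{[a,b]}=[\phi_a,d\phi_b]$ extending after expanding $d_{R\otimes M}(s\otimes\phi_b)$ by the Leibniz rule and discarding the $d_R$-term via $[\phi_a,\phi_b]=0$. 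The only non-formal input is the graded-commutativity of $(A_{PL})_n$, which is precisely what makes $R\otimes L$ and $R\otimes M$ DGLAs and what lets the Koszul signs cancel.
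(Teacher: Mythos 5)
Your argument is correct; the paper itself gives no proof here, deferring entirely to \cite[Proposition 4.9]{algebraicBTT}, and your levelwise tensor-product construction (base change of a calculus along the graded-commutative DGA $(A_{PL})_n$, descent to the Thom--Whitney subspace via the algebra-morphism property of the face maps $\delta^k$ and the compatibility $\partial_k(l\contr v)=\partial_k l\contr\partial_k v$, then componentwise verification of the two Cartan identities) is exactly the standard argument underlying that reference. No gaps.
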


\begin{proof}
 \cite[Proposition 4.9]{algebraicBTT}.
\end{proof}

\begin{example}\label{exe. cartan homoto relativa TW}

Let $X$ be a smooth algebraic  variety and $D$ a normal crossing  divisor.
 Denote by $(\Omega^{\ast}_X(\log D),d)$  the logarithmic differential complex and $\Theta_X(-\log  D)$ the usual subsheaf of $\Theta_X$ preserving the ideal of $D$.

According to Example~\ref{exam.cartan relativo su ogni aperto},
for every open subset $U\subset X$, we have a contraction
\[\Theta_X(-\log  D)(U)\ \times\ \Omega^*_X(\log D)(U)\, \xrightarrow{\;\contr\;}
\Omega^*_X(\log D)(U).\]
Since it  commutes with   restrictions to open subsets, for every affine open cover $\mathcal{U}$ of $X$, we have a
semicosimplicial contraction
\[
\Theta_X(-\log D)(\mathcal{U})\times\Omega^*_X (\log D)(\mathcal{U})
\xrightarrow{\;\contr\;} \Omega^*_X (\log D)(\mathcal{U}).
\]
According to Lemma \ref{lem.TWforcontractions}, it is well defined
the Cartan homotopy
\[\bi\colon  {TW}( \Theta_X(-\log D)(\mathcal{U})) \longrightarrow
\Hom^*( {TW}(\Omega^*_X (\log D)(\mathcal{U})),
 {TW}(\Omega^*_X (\log D)(\mathcal{U}))).\]

\end{example}

\section{Deformations of pairs}\label{section deformation}

Let $Z \subset X$ be a closed subscheme of a smooth variety $X$ and denote by $j:Z \hookrightarrow X$ the closed embedding. Note that at this point we are not assuming neither $Z$ divisor  nor $Z$ smooth.
We recall the definition of infinitesimal deformations of the closed embedding $j:Z \hookrightarrow X$, i.e., infinitesimal deformations of the pair $(X,Z)$; full details can be found for instance in \cite[Section 3.4.4]{Sernesi} or \cite{Kaw0}.
\begin{definition}
Let $A\in \Art$. An \emph{infinitesimal deformation} of  $j:Z \hookrightarrow X$
over $\Spec(A)$ is a   diagram
\begin{center}
$\xymatrix{\mathcal{Z} \ar[rr]^J\ar[dr]_p &  &\mathcal{X}   \ar[dl]^\pi  \\
          &   \Spec(A), &  &  \\ }$
\end{center}
where $p$ and $\pi$ are    flat maps, such   that the diagram is isomorphic to
$j:Z \hookrightarrow X$  via the pullback $\Spec(\K) \to \Spec(A)$. Note that $J$ is also a closed embedding \cite[pag 185]{Sernesi}.

Given another infinitesimal deformation of $j$:
\begin{center}
$\xymatrix{\mathcal{Z}' \ar[rr]^{J'}\ar[dr]_{p'} &  & \mathcal{X}'   \ar[dl]^{\pi'}  \\
          &   \Spec(A), &  &  \\ }$
\end{center}
an isomorphism between these two deformations is a pair of isomorphisms of deformations:
\[
\alpha: \mathcal{Z} \to \mathcal{Z}' , \qquad \beta: \mathcal{X} \to \mathcal{X}'
\]
such that the following diagram
\begin{center}
$\xymatrix{\mathcal{Z}  \ar[rr]^{J}\ar[d]_{\alpha} &  & \mathcal{X } \ar[d]^{\beta}  \\
         \mathcal{Z}'  \ar[rr]^{J'} &  & \mathcal{X}'   \\ }$
\end{center}
is commutative.
The associated  infinitesimal deformation functor is
$$
\Def_{(X,Z)} : \Art \to \Set,
$$
$$
\Def_{(X,Z)}(A)= \{
 \mbox{isomorphism classes of infinitesimal deformations of $j$ over
 $\Spec(A)$} \}.
$$
\end{definition}

Furthermore, we define the sub-functor
$$
{\Def'}_{(X,Z)} : \Art \to \Set,
$$
$$
{\Def'}_{(X,Z)} =
\left\{\begin{array}{c}  \mbox{  isomorphism   classes   of   locally trivial}\\
 \mbox{  infinitesimal   deformations $j$ over  $\Spec(A)$}\end{array} \right\}.
$$

\begin{remark}
Since every affine non singular algebraic variety is rigid \cite[Theorem 1.2.4]{Sernesi}, whenever $Z\subset X$ is smooth,  every deformation of $j$ is locally trivial and so ${\Def}_{(X,Z)}\cong {\Def'}_{(X,Z)}$.
\end{remark}

Let $\mathcal{U}=\{U_i\}_{i \in I}$ be an affine open cover of $X$ and   $TW( {\Theta}_X(-\log Z)(\mathcal{U}))$ the DGLA associated with the sheaf of Lie algebras  ${\Theta}_X(-\log Z)$ as in Example \ref{ex.cech semicosimplicial}.

\begin{theorem}\label{teo. DGLA controlling def of j}
In the assumption above, the DGLA $TW( {\Theta}_X(-\log Z)(\mathcal{U}))$ controls the locally trivial deformation of the closed embedding $j:Z \hookrightarrow X$, i.e., there exists an isomorphism  of deformation functors
\[
\Def_{TW( {\Theta}_X(-\log Z)(\mathcal{U}))} \cong
\Def'_{(X,Z)}.
\]
In particular, if $Z\subset X$ is smooth, then
$\Def_{TW( {\Theta}_X(-\log Z)(\mathcal{U}))} \cong
\Def_{(X,Z)}$.
\end{theorem}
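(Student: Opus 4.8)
The plan is to prove that the Thom-Whitney DGLA $TW(\Theta_X(-\log Z)(\mathcal{U}))$ controls locally trivial deformations of the embedding $j$ by exhibiting an explicit isomorphism of deformation functors, exploiting the concrete description of $\Def_{TW(\g^\Delta)}$ given in Example \ref{example.funtore se DGLA sono LIE}. Since each stalk $\Theta_X(-\log Z)(U_i)$ is a Lie algebra concentrated in degree zero, that example tells us that $\Def_{TW(\Theta_X(-\log Z)(\mathcal{U}))}(A)$ is identified with the first semicosimplicial nonabelian cohomology $H^1_{\mathrm{sc}}(\exp \Theta_X(-\log Z)(\mathcal{U}))(A)$, i.e., with Čech $1$-cocycles $\{x_{ij}\}$ with values in $\Theta_X(-\log Z)(U_{ij})\otimes\mathfrak{m}_A$ satisfying the multiplicative cocycle condition $e^{x_{jk}}e^{-x_{ik}}e^{x_{ij}}=1$, modulo the coboundary relation by $0$-cochains $\{a_i\}$. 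The whole proof therefore reduces to matching this combinatorial datum with geometric locally trivial deformations of $j$.

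First I would set up the geometric side. Given a locally trivial deformation of $j:Z\hookrightarrow X$ over $\Spec(A)$, I would use local triviality to choose, over each affine $U_i$, a trivialization $\theta_i$ of the deformation of the pair $(U_i, Z\cap U_i)$, i.e., an isomorphism of the restricted deformation with the trivial one $U_i\times\Spec(A)$ compatible with the subscheme. This is possible precisely because affine smooth varieties are rigid (the Remark preceding the theorem), and the local trivializations can be chosen to respect the ideal sheaf of $Z$, since the deformation of the embedding is trivial locally. On overlaps $U_{ij}$ the two trivializations differ by an automorphism $\theta_i\theta_j^{-1}$ of the trivial deformation over $U_{ij}$ which restricts to the identity modulo $\mathfrak{m}_A$ and which \emph{preserves the ideal sheaf} of $Z$; such an automorphism is the exponential $e^{x_{ij}}$ of a unique element $x_{ij}\in\Theta_X(-\log Z)(U_{ij})\otimes\mathfrak{m}_A$, because $\Theta_X(-\log Z)$ is by definition the sheaf of derivations preserving the ideal sheaf. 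The cocycle condition on the $x_{ij}$ follows from the obvious identity $(\theta_i\theta_j^{-1})(\theta_j\theta_k^{-1})(\theta_k\theta_i^{-1})=\mathrm{id}$ on triple overlaps, and changing the chosen trivializations $\theta_i\mapsto\theta_i e^{a_i}$ (with $a_i\in\Theta_X(-\log Z)(U_i)\otimes\mathfrak{m}_A$) produces exactly the gauge relation $e^{-a_i}e^{x_{ij}}e^{a_j}=e^{y_{ij}}$. This assignment is manifestly natural in $A$ and yields a well-defined natural transformation $\Def'_{(X,Z)}\to\Def_{TW(\Theta_X(-\log Z)(\mathcal{U}))}$.

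For the inverse direction I would reverse this construction: starting from a cocycle $\{x_{ij}\}$, glue the trivial pieces $U_i\times\Spec(A)$ along $U_{ij}$ via the automorphisms $e^{x_{ij}}$, obtaining a flat deformation $\mathcal{X}$ of $X$, and simultaneously glue the subschemes $Z\cap U_i$ via the \emph{same} automorphisms — which is legitimate precisely because each $e^{x_{ij}}$ preserves the ideal sheaf of $Z$, hence carries the local copy of $Z$ to itself — obtaining a closed subscheme $\mathcal{Z}\subset\mathcal{X}$ flat over $A$ with $J:\mathcal{Z}\hookrightarrow\mathcal{X}$ restricting to $j$. The cocycle condition guarantees that the gluing is consistent on triple overlaps, and the gauge relation guarantees that cohomologous cocycles give isomorphic embedded deformations. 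One then checks that the two constructions are mutually inverse, essentially by unwinding definitions.

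\textbf{The main obstacle} will be the careful verification that local trivializations of the \emph{embedding} (and not merely of $X$ and of $Z$ separately) exist and can be chosen so that the transition automorphisms land in $\Theta_X(-\log Z)$ rather than in the full $\Theta_X$; this is the crux that distinguishes deformations of the pair from deformations of $X$ alone, and it is exactly where the definition of $\Theta_X(-\log Z)$ as ideal-preserving derivations enters. I expect this to hinge on rigidity of the affine pair $(U_i,Z\cap U_i)$ — that is, on the statement that locally trivial deformations of a closed embedding over an affine base are trivial — which should follow from the smoothness hypotheses via the cited rigidity result \cite[Theorem 1.2.4]{Sernesi}. The final clause, that $\Def'_{(X,Z)}\cong\Def_{(X,Z)}$ when $Z$ is smooth so that $\Def_{(X,Z)}\cong\Def_{TW(\Theta_X(-\log Z)(\mathcal{U}))}$, is then immediate from the Remark identifying locally trivial deformations with all deformations in the smooth case.
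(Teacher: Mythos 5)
Your proposal is correct and follows essentially the same route as the paper: both reduce the statement to the explicit description of $\Def_{TW(\g^\Delta)}$ as $H^1_{\rm sc}(\exp\g^\Delta)$ from Example \ref{example.funtore se DGLA sono LIE}, and both identify a locally trivial deformation of $j$ with transition automorphisms of the trivial local deformations whose logarithms lie in $\Theta_X(-\log Z)(U_{ij})\otimes\mathfrak{m}_A$ precisely because they preserve the ideal sheaf of $Z$, with gauge equivalence matching change of trivializations. The only cosmetic difference is that the paper encodes the transition data as compatible pairs $(\varphi_{ij},\phi_{ij})$ of automorphisms of the deformations of $Z$ and $X$ separately and then observes that compatibility is equivalent to ideal-preservation, whereas you work directly with automorphisms of the pair.
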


\begin{proof}
See also \cite[Proposition 3.4.17]{Sernesi}, \cite[Theorem 4.2]{donarendiconti}.

Denote by $\mathcal{V}=\{V_i=U_i\cap Z\}_{i \in I}$ the induced affine open cover of  $Z$.
Every locally trivial deformation of $j$ is obtained by the gluing  of the trivial deformations
\[
\xymatrix{ V_i \ar[rr] \ar[d]& & V_{i } \times \Spec(A) \ar[d]\\
  U_i \ar[rr]& & U_{i } \times \Spec(A), \\ }
\]
in a compatible way along the double intersection $V_{ij} \times \Spec(A)$ and $U_{ij} \times \Spec(A)$.
Therefore, it is determined by automorphisms of the trivial deformations, that glues over triple intersections,  i.e., by pairs of automorphisms $(\varphi_{ij}, \phi_{ij})$, where
\[ \varphi_{ij} \colon V_{ij} \times \Spec(A) \to V_{ij} \times \Spec(A)
\qquad  \mbox{ and } \qquad \phi_{ij} \colon U_{ij} \times \Spec(A) \to U_{ij} \times \Spec(A)
\]
are automorphisms of deformations, satisfying the cocycle condition on triple intersection and such that the following diagram

\[
\xymatrix{ V_{ij} \times \Spec(A)  \ar[rr]^{\varphi_{ij}} \ar[d]& & V_{ij} \times \Spec(A) \ar[d]\\
  U_{ij}  \times \Spec(A)  \ar[rr]^{\phi_{ij}}& & U_{ij} \times \Spec(A) \\ }
\]
commutes.
Equivalently, we have ${\phi_{ij}}_{| V_{ij}}=\varphi_{ij}$.
Since we are in characteristic zero, we  can take the logarithms
so that  $\varphi_{ij}=e^{d_{ij}}$,  for some
$d_{ij}\in\Theta_Z(V_{ij})\otimes\mathfrak{m}_A$, and $\phi_{ij}=e^{D_{ij}}$,  for some $D_{ij}\in\Theta_X(U_{ij})\otimes\mathfrak{m}_A$. The compatibility  condition is equivalent to the condition $D_{ij} \in \Gamma(U_{ij},{\Theta}_X(-\log Z))\otimes\mathfrak{m}_A$. Summing up, a deformation of $j$ over $\Spec (A)$ corresponds to the datum of a sequence $\{D_{ij}\}_{ij} \in \prod_{ij}{\Theta}_X(-\log Z))(U_{ij})\otimes\mathfrak{m}_A$ satisfying the cocycle condition
\begin{equation}\label{equa.cociclo auto Vij}
e^{D_{jk}} e^{-D_{ik}} e^{D_{ij}}  =\Id , \qquad
   \forall \ i<j<k \in I.
\end{equation}

Next, let $J'$ be another deformation of
$j$ over  $\Spec(A)$. To give an isomorphism of deformations between $J$ and $J'$ is equivalent to give, for every $i$, an automorphism $\alpha_i$ of $V_{i } \times \Spec(A)$ and an automorphism  $\beta_i$ of $U_{i } \times \Spec(A)$, that are isomorphisms of deformations of $X$ and $Z$, respectively, i.e.,  for every $i<j$,
$\varphi_{ij}= {\alpha_i}^{-1}
{\varphi_{ij}'}^{-1}\alpha_j$ and  $\phi_{ij}= {\beta_i}^{-1}
{\phi_{ij}'}^{-1}\beta_j$.
Moreover, they have to be compatible, i.e.,   the following diagram
\begin{center}
$\xymatrix{V_i \times \Spec(A) \ar[rr] \ar[d]_{\alpha_i} &  & U_i \times \Spec(A) \ar[d]^{\beta_i}  \\
        V_i \times \Spec(A)  \ar[rr]  &  & U_i \times \Spec(A)    \\ }$
\end{center}
has to commutes, for every $i$.

Taking again logarithms, an isomorphism between the deformations $J$  and $J'$  is equivalent to the existence of a sequence   $\{a_i\}_i \in \prod_i\Theta_X(-\log  Z)(U_i)\otimes\mathfrak{m}_A$,
 such that $e^{-a_i}e^{D'_{ij}}e^{a_j}=e^{D_{ij}}$. Then, the conclusion follows from
the explicit description of the functor $
\Def_{TW( {\Theta}_X(-\log Z)(\mathcal{U}))} $ given in 
  Example \ref{example.funtore se DGLA sono LIE}.
\end{proof}

\begin{remark}\label{remark def X as def of trivial pair}
If $Z=0$, then we are analysing nothing more than the infinitesimal deformations of  the smooth variety $X$ and they  are controlled by the tangent sheaf, i.e., 
$ \Def_{TW( {\Theta}_X (\mathcal{U}))} \cong \Def_X$, for any open affine cover $\mathcal{U}$ of $X$ 
\cite[Theorem 5.3]{algebraicBTT}.
\end{remark}

\begin{example}\label{exa DGLA (X,D) on C}
In the case $\K=\C$, we can consider the DGLA $(A^{0,*}_X({\Theta}_X(-\log Z))= \oplus_i \Gamma(X, \sA^{0,i}_X({\Theta}_X(-\log Z))), \debar, [ , ])$  as  an explicit model for $TW( {\Theta}_X(-\log Z)(\mathcal{U}))$ \cite[Section 5]{ManettiSemireg} \cite[Corollary V.4.1]{Iaconophd}.
\end{example}

\section{Obstructions of pairs}
\label{section obstruction computations}
  
In this section, we analyse obstructions for the infinitesimal deformations of pairs, whenever  the sub variety is a divisor, so that we can make use of the   logarithmic differential complex  $(\Omega^{\ast}_X(\log D),d)$.

\begin{theorem}\label{thm.maintheorem}
Let  $X$ be a smooth projective variety  of dimension $n$, defined
over an algebraically closed field of characteristic 0 and  $D \subset X$ a smooth divisor. If
the contraction map
\begin{equation}\label{eqazione semiregolarity}
H^*(X,\Theta_X (-\log D))\xrightarrow{\bi}\Hom^*(H^*(X,\Omega^n_X (\log D)),H^*(X,\Omega^{n-1}_X  (\log D))) 
\end{equation}
is injective, then  the DGLA $TW( {\Theta}_X(-\log D)(\mathcal{U}))$ is homotopy abelian, for every affine open cover $\mathcal{U}$ of
$X$.
\end{theorem}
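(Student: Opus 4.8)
The plan is to exhibit an $L_\infty$ morphism from $TW(\Theta_X(-\log D)(\mathcal{U}))$ into a DGLA that is known to be homotopy abelian, whose induced map on cohomology is the contraction $\bi$ of the hypothesis, and then invoke Lemma~\ref{lem.criterioquasiabelianita}. This is exactly the mechanism used in \cite{algebraicBTT} for the Bogomolov–Tian–Todorov theorem, with the algebraic de Rham complex replaced throughout by the logarithmic complex $\Omega^*_X(\log D)$. The whole construction from Section~\ref{section back ground DGLA} is already set up for this: Example~\ref{exe. cartan homoto relativa TW} gives a Cartan homotopy
\[
\bi\colon TW(\Theta_X(-\log D)(\mathcal{U})) \longrightarrow \Hom^*\bigl(TW(\Omega^*_X(\log D)(\mathcal{U})),\,TW(\Omega^*_X(\log D)(\mathcal{U}))\bigr),
\]
and hence a Lie derivative $\bl$, which is a morphism of DGLAs.

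**The key steps.**
First I would single out the target. Let $M=\Hom^*(TW(\Omega^*_X(\log D)(\mathcal{U})),TW(\Omega^*_X(\log D)(\mathcal{U})))$ and let $W$ be the complex $TW(\Omega^*_X(\log D)(\mathcal{U}))$. Inside $M$ sits the DG Lie subalgebra $N$ of endomorphisms preserving the Hodge filtration $F^n W$ (the piece coming from $\Omega^n_X(\log D)$); the point is that $\bl$ lands in $N$, because Lie derivatives respect the filtration by form-degree. Lemma~\ref{lem.cartan induce morfismo TW} then upgrades the pair $(\bl,\bi)$ to an $L_\infty$ morphism
\[
(\bl,\bi)\colon TW(\Theta_X(-\log D)(\mathcal{U})) \dashrightarrow TW(\chi),
\]
where $\chi\colon N\hookrightarrow M$ is the inclusion and $TW(\chi)$ its homotopy fibre. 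Second, I would prove $TW(\chi)$ is homotopy abelian via Lemma~\ref{lem.criterio TW abelian} together with the Example following it: this reduces to checking that the map $H^*(F^n W)\to H^*(W)$ is injective, which is precisely the $E_1$-degeneration of the logarithmic Hodge-to-de Rham spectral sequence (Theorem~\ref{teo degen deligne}), since degeneration forces the Hodge filtration to be strict and the associated-graded inclusions to be injective on cohomology.

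**Identifying the linear part.**
Third, and this is the crux, I would identify the map induced by $(\bl,\bi)$ in cohomology with the contraction $\bi$ of~\eqref{eqazione semiregolarity}. By Lemma~\ref{lem.cartan induce morfismo TW} the linear part is $a\mapsto(\bl_a,\,t\bl_a+dt\,\bi_a)$, and under the quasi-isomorphism of Remark~\ref{rem.quasiisoTWcono}, $TW(\chi)\simeq (M/N)[-1]$, the class of this element is represented by $\bi_a$ modulo $N$. Using that $H^*(TW(\Theta_X(-\log D)(\mathcal{U})))\cong H^*(X,\Theta_X(-\log D))$ (Example~\ref{ex.cech semicosimplicial}, since the $U_i$ are affine) and similarly $H^*(W)\cong\mathbb{H}^*(X,\Omega^*_X(\log D))$ with $F^n$ corresponding to $H^*(X,\Omega^n_X(\log D))$, the induced cohomology map becomes exactly contraction
\[
H^*(X,\Theta_X(-\log D)) \to \Hom^*\bigl(H^*(X,\Omega^n_X(\log D)),\,H^*(X,\Omega^{n-1}_X(\log D))\bigr).
\]
The hypothesis says this is injective; Lemma~\ref{lem.criterioquasiabelianita} then yields that $TW(\Theta_X(-\log D)(\mathcal{U}))$ is homotopy abelian, completing the argument.

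**The main obstacle.**
The hardest part will be the third step: making precise, via the filtration and the degeneration, that the cohomology of the abstract homotopy fibre $TW(\chi)$ maps onto a $\Hom$-space between hypercohomology groups, and that the contraction at the level of the Thom–Whitney complexes descends to the honest contraction $\bi$ on cohomology. This requires carefully threading the two quasi-isomorphisms (the integration map of Example~\ref{ex.cech semicosimplicial} and the one of Remark~\ref{rem.quasiisoTWcono}) through the filtration, and checking that the filtered piece $F^n$ is the only one surviving in the target because $\Omega^n_X(\log D)$ is the top nonzero logarithmic form. Once the bookkeeping of filtrations is in place, the degeneration theorem does the real work and the conclusion is immediate.
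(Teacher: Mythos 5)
Your proposal is correct and follows essentially the same route as the paper's own proof: the Cartan homotopy of Example~\ref{exe. cartan homoto relativa TW}, the homotopy fibre of the inclusion of the sub-DGLA of endomorphisms preserving $TW(\Omega^n_X(\log D)(\mathcal{U}))$, homotopy abelianity via Lemma~\ref{lem.criterio TW abelian} and the logarithmic $E_1$-degeneration, and the identification of the linear part with the contraction map through Remark~\ref{rem.quasiisoTWcono}. The only detail worth making fully explicit in your third step is that the degeneration also gives the injectivity of $H^*(X,\Omega^{n-1}_X(\log D))$ into the cohomology of the quotient complex, which is what lets the cokernel of $\chi$ receive the $\Hom$-space of the hypothesis injectively.
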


\begin{proof}
According to  Lemma~\ref{lem.criterioquasiabelianita}, it is
sufficient to prove the existence of a homotopy abelian
DGLA $H$ and an $L_\infty$-morphism
$TW( {\Theta}_X(-\log D)(\mathcal{U}))
 \dashrightarrow H$, such that the induced map of complexes is injective in cohomology.
We use the Cartan homotopy to construct the morphism, as in  Lemma \ref{lem.cartan induce morfismo TW}, and the homotopy fibre construction to provide an homotopy abelian DGLA $H$, as in Lemma \ref{lem.criterio TW abelian}.

Let $\mathcal{U}$ be an affine open  cover of $X$. For every $i\le n$, denote by $\check{C}(\mathcal{U},\Omega^i_X (\log D))$   the \v{C}ech complex of the coherent sheaf $ \Omega^i_X (\log D)$,  and
$\check{C}(\mathcal{U},\Omega^\ast_X (\log D))$  the total complex of the
logarithmic de Rham complex $\Omega^*_X (\log D)$ with
respect to the cover $\mathcal{U}$, as in  Example \ref {ex.cech semicosimplicial}.
We note that
\[
\check{C}(\mathcal{U},\Omega^n_X (\log D))^i=
\bigoplus_{a+b=i}\check{C}(\mathcal{U},\Omega^a_X (\log D))^b.
\]
and that $\check{C} (\mathcal{U}, \Omega^n_X (\log D))$ is a subcomplex of
$\check{C} (\mathcal{U}, \Omega^\ast_X (\log D))$.

\smallskip

We also have a commutative diagram of complexes  with horizontal
quasi-isomorphisms:

\[
\xymatrix{ 
\check{C}(\mathcal{U}, \Omega^n_X (\log D))
\ar[rr]  \ar@{^{(}->}[d]  &   &   {TW} ( \Omega^n_X  (\log D)(\mathcal{U}))
 \ar@{^{(}->}[d] \\  
 \check{C} (\mathcal{U},\Omega^\ast_X (\log D))  
\ar[rr]  &        &   {TW}(\Omega^\ast_X   (\log D) (\mathcal{U})).
}\]

According to Theorem \ref{teo degen deligne},  the spectral sequence  associated with the Hodge filtration  degenerates at the $E_1$-level, where  $E_1^{p,q}=H^q(X,\Omega^{p}_X(\log D))$; 
this implies that we have  the following injections:
\[ H^*(X,\Omega^n_X (\log D)) = H^*(\check{C} (\mathcal{U},\Omega^n_X (\log D)))
\hookrightarrow 
H^*(\check{C} (\mathcal{U},\Omega^*_X (\log D))).\]
\[ H^*(X,   \Omega^{n-1}_X (\log D))   =    H^*   (     \check{C}   (   \mathcal{U},  \Omega^{n-1}_X (\log D) ) )
\hookrightarrow    H^*\left  (  \frac{  \check{C}(\mathcal{U},\Omega^*_X (\log D))}{
\check{C}(\mathcal{U},\Omega^n_X (\log D))}\right).\]

Thus,  the natural inclusions of complexes
\[
{TW}(\Omega^n_X (\log D)  (\mathcal{U}))  \to {TW}(
  \Omega^\ast_X (\log D)    (\mathcal{U})),
  \]
\[ 
{TW}  (\Omega^{n-1}_X (\log D)(\mathcal{U}))\to \frac{{TW}(
\Omega^\ast_X (\log D)(\mathcal{U}))}{{TW}(\Omega^n_X (\log D)(\mathcal{U}))},\]
induces injective morphisms  in cohomology.

Consider the  differential graded Lie algebras
\[
M=\Hom^*( {TW}( \Omega^*_X (\log D)(\mathcal{U})),  {TW}
(\Omega^*_X (\log D)(\mathcal{U})),
\]
\[
L=\{f\in M \mid
f( {TW}(\Omega^n_X (\log D)(\mathcal{U})))\subset
 {TW}(\Omega^n_X (\log D)(\mathcal{U}))\},
\]
and denote by $\chi\colon L \to M$ the inclusion.  Lemma \ref{lem.criterio TW abelian} implies that the homotopy fibre $TW(\chi^{\Delta})$ is homotopy abelian.
Next, we  provide the existence of a morpshim to this homotopy abelian DGLA.

According to  Example \ref{exe. cartan homoto relativa TW}, it is well defined the Cartan homotopy
\[\bi\colon  {TW}( \Theta_X(-\log D)(\mathcal{U})) \longrightarrow
\Hom^*( {TW}(\Omega^*_X (\log D)(\mathcal{U})),
 {TW}(\Omega^*_X (\log D)(\mathcal{U}))).\]

In particular,    for every $\xi\in  {TW}(\Theta_X(-\log D)(\mathcal{U}))$
and every $k$, we note that
\[ \bi_{\xi}( {TW}(\Omega^k_X (\log D)(\mathcal{U})))\subset
  {TW}(\Omega^{k-1}_X (\log D)(\mathcal{U})),\]
\[ \bl_{\xi}( {TW}(\Omega^k_X (\log D)(\mathcal{U})))\subset
 {TW}(\Omega^k_X (\log D)(\mathcal{U})),\qquad \bl_{\xi}
=d\bi_{\xi}+\bi_{d\xi}.\]

Therefore,  $\bl( {TW}(\Theta_X(-\log D)(\mathcal{U})))\subset L$ and so, by
Lemma \ref{lem.cartan induce morfismo TW}, there exists an
$L_\infty$-morphism
\[
 {TW}( \Theta_X(-\log D)(\mathcal{U})) \stackrel{(\bl,\bi)}{ \dashrightarrow}
TW(\chi^{\Delta}).
\]
Finally, since the map $\chi$ in injective, according to Remark \ref{rem.quasiisoTWcono}, the homotopy fibre
  $TW(\chi^{\Delta})$ is quasi-isomorphic to the suspension of its  cokernel
\[\coker \chi[-1]= \Hom^*\left( {TW}(\Omega^n_X (\log D)\mathcal{U})),
\frac{ {TW}(\Omega^*_X (\log D)(\mathcal{U}))}{
 {TW}(\Omega^{n}_X (\log D)(\mathcal{U}))}\right)[-1].\]

Summing up, since the $L_\infty$-morphism induces a morphism of complexes,  we  have the following  commutative diagram of complexes
\[\xymatrix{  {TW}(\Theta_X(-\log D)(\mathcal{U}))
\ar[rr]^{(\bl,\bi)} \ar[d]^{\bi} & &  TW(\chi^{\Delta})
 \ar[d]^{q-iso} \\
\Hom^*\left( {TW}(\Omega^n_X (\log D)(\mathcal{U})),
{TW}(\Omega^{n-1}_X (\log D)(\mathcal{U}))\right)
\ar[rr]^{\qquad\qquad\qquad\alpha\!\!\!\!} & & \coker \chi[-1].
\\ }\]

By the assumption of the theorem, together with
\cite[3.1]{navarro}, the left-hand map is  injective in
cohomology.
Since  $\alpha$ is also  injective in cohomology, we conclude that
the $L_\infty$-morphism $(\bl,\bi)$ is injective in cohomology.

\end{proof}

\begin{theorem}\label{thm.kodairaprinciple}
Let  $X$  be a smooth  projective variety defined over an algebraically closed field of characteristic 0 and  $D \subset X$ a  smooth divisor.  Then, the obstructions to the deformations of the pair $(X,D)$ are contained in the kernel of  the contraction map
\[ H^2(\Theta_X( (-\log D)))\xrightarrow{\bi}\prod_{p}
\Hom(H^p((\Omega^{n}_X  (\log D)),H^{p+2}((\Omega^{n-1}_X  (\log D)).\]
\end{theorem}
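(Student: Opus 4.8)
The plan is to reuse the machinery assembled in the proof of Theorem~\ref{thm.maintheorem}, now discarding the injectivity hypothesis, which there served only to conclude homotopy abelianness of the source DGLA and plays no role in the present (weaker) statement. Recall first that, since $D$ is a smooth divisor, the deformations of the pair $(X,D)$ are controlled by the DGLA $TW(\Theta_X(-\log D)(\mathcal{U}))$ (Theorem~\ref{teo. DGLA controlling def of j}); consequently the obstructions lie in $H^2(TW(\Theta_X(-\log D)(\mathcal{U})))\cong H^2(X,\Theta_X(-\log D))$, the last identification coming from Example~\ref{ex.cech semicosimplicial}.

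The point I would stress is that the $L_\infty$-morphism
\[
TW(\Theta_X(-\log D)(\mathcal{U})) \stackrel{(\bl,\bi)}{\dashrightarrow} TW(\chi^{\Delta}),
\]
built through Example~\ref{exe. cartan homoto relativa TW} and Lemma~\ref{lem.cartan induce morfismo TW}, exists \emph{unconditionally}: its construction rests only on the Cartan homotopy and on the inclusion $\bl(TW(\Theta_X(-\log D)(\mathcal{U})))\subset L$, neither of which uses injectivity of the contraction map. Moreover, by Lemma~\ref{lem.criterio TW abelian} the homotopy fibre $TW(\chi^{\Delta})$ is always homotopy abelian, so its deformation functor $\Def_{TW(\chi^{\Delta})}$ is smooth, i.e.\ unobstructed.

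The decisive step is the naturality of obstruction theory under $L_\infty$-morphisms: such a morphism induces a morphism of the associated deformation functors that commutes with the obstruction maps, the induced map on the obstruction spaces being the one given in cohomology by its linear part. Applying this to $(\bl,\bi)$, the complete obstruction $\omega\in H^2(TW(\Theta_X(-\log D)(\mathcal{U})))$ of any deformation of $(X,D)$ is carried by the linear part $(\bl,\bi)_*$ to the obstruction of its image in $H^2(TW(\chi^{\Delta}))$, which vanishes because the target is unobstructed. Hence every obstruction lies in $\ker\big((\bl,\bi)_*\big)$ on $H^2$.

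It remains to identify this kernel with that of the contraction map, for which I would invoke the commutative square of complexes from the proof of Theorem~\ref{thm.maintheorem},
\[\xymatrix{  TW(\Theta_X(-\log D)(\mathcal{U}))
\ar[rr]^{(\bl,\bi)} \ar[d]^{\bi} & & TW(\chi^{\Delta})
 \ar[d]^{q-iso} \\
\Hom^*\left( TW(\Omega^n_X (\log D)(\mathcal{U})),
TW(\Omega^{n-1}_X (\log D)(\mathcal{U}))\right)
\ar[rr]^{\qquad\qquad\qquad\alpha} & & \coker \chi[-1],
}\]
where the right vertical arrow is a quasi-isomorphism (Remark~\ref{rem.quasiisoTWcono}) and $\alpha$ is injective in cohomology. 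Passing to $H^2$, and using \cite[3.1]{navarro} to identify $H^2$ of the lower-left $\Hom$-complex with $\prod_p\Hom(H^p(\Omega^n_X(\log D)),H^{p+2}(\Omega^{n-1}_X(\log D)))$, the relation $(q\text{-iso})\circ(\bl,\bi)=\alpha\circ\bi$ together with the injectivity of $\alpha$ and of the right-hand map yields $\ker\big((\bl,\bi)_*\big)=\ker\big(\bi_*\big)$ on $H^2$, which is exactly the claim. The hard part is making rigorous the naturality of the complete obstruction under an $L_\infty$-morphism and the vanishing of its image in the unobstructed target; this is the DGLA form of the semiregularity principle and can be extracted from the functorial obstruction theory of \cite{semireg}.
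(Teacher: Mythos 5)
Your proposal is correct and follows essentially the same route as the paper: the paper's own proof likewise observes that the $L_\infty$-morphism $(\bl,\bi)$ into the homotopy abelian (hence unobstructed) DGLA $TW(\chi^{\Delta})$ exists without any injectivity hypothesis, so that all obstructions lie in the kernel of the induced map on $H^2$, which is identified with the contraction map via the same commutative square. Your write-up merely makes explicit two points the paper leaves implicit, namely the naturality of obstructions under $L_\infty$-morphisms and the identification $\ker\bigl((\bl,\bi)_*\bigr)=\ker(\bi_*)$ coming from the quasi-isomorphism and the injectivity of $\alpha$.
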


\begin{proof}
Following the proof of Theorem~\ref{thm.maintheorem},
 for every affine open cover $\mathcal{U}$ of $X$,
there exists  an   $L_{\infty}$-morphism
$TW(\Theta_X( (-\log D)(\mathcal{U})))\dashrightarrow TW
(\chi^{\Delta})$ such that
$TW(\chi^{\Delta})$ is homotopy abelian. Therefore,  the deformation functor
associated with $TW(\chi^{\Delta})$ is unobstructed
and the obstructions of
$\Def_{(X,D)}\simeq\Def_{TW(\Theta_X( (-\log D)(\mathcal{U}))}$ are
contained in the kernel of the obstruction map
$H^2(TW(\Theta_X( (-\log D)(\mathcal{U})))\to H^2(TW(\chi^{\Delta}))$.
\end{proof}

\begin{remark}
In the previous theorem, we prove that all obstructions are annihilated by the contraction map; in general, the $T^1$-lifting  theorem is definitely insufficient for
proving this kind of theorem, see also   \cite{IaconoSemireg,ManettiSeattle}.
\end{remark}

\begin{corollary} \label{corollari log calabi yau formal smooth}
Let  $\mathcal{U}=\{U_i\}$ be  an affine open  cover of a smooth
projective variety  $X$ defined over an algebraically closed field
of characteristic 0 and  $D \subset X$ a  smooth divisor. If $(X,D)$ is a log Calabi-Yau pair,  then  the DGLA $TW( {\Theta}_X(-\log D))(\mathcal{U})$ is homotopy abelian.
\end{corollary}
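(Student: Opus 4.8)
The plan is to deduce the statement directly from Theorem~\ref{thm.maintheorem} by showing that, under the log Calabi--Yau hypothesis, the injectivity of the contraction map \eqref{eqazione semiregolarity} is automatic. First I would use the triviality of $\Omega^n_X(\log D)\cong\Oh_X$ to fix a nowhere-vanishing global section $\omega\in H^0(X,\Omega^n_X(\log D))$ generating this line bundle (a \emph{logarithmic volume form}). The trivialization identifies $H^*(X,\Omega^n_X(\log D))\cong H^*(X,\Oh_X)$ as graded vector spaces, under which the class $[\omega]$ corresponds to the unit in degree $0$.

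The key observation is that contraction with $\omega$ is a \emph{sheaf isomorphism}
\[
\Theta_X(-\log D)\longrightarrow \Omega^{n-1}_X(\log D),\qquad \xi\mapsto \xi\contr\omega .
\]
Both sheaves are locally free of rank $n$ (the source is dual to $\Omega^1_X(\log D)$, and $\Omega^{n-1}_X(\log D)=\wedge^{n-1}\Omega^1_X(\log D)$ has rank $\binom{n}{n-1}=n$), and in local logarithmic coordinates the map is the familiar $\xi\mapsto \xi\contr\omega$ with $\omega$ a local generator; since $\omega$ is nowhere zero as a section of the line bundle $\Omega^n_X(\log D)$, this map has trivial kernel and cokernel, hence is an isomorphism of locally free sheaves. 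Consequently the induced map in cohomology
\[
H^*(X,\Theta_X(-\log D))\longrightarrow H^*(X,\Omega^{n-1}_X(\log D)),\qquad [\xi]\mapsto [\xi\contr\omega],
\]
is an isomorphism.

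It then remains to connect this with the $\Hom$-valued contraction $\bi$ of Theorem~\ref{thm.maintheorem}. For any class $[\xi]\in H^*(X,\Theta_X(-\log D))$, the homomorphism $\bi_{[\xi]}$ evaluated on the distinguished class $[\omega]$ returns precisely $[\xi\contr\omega]$. Hence if $\bi_{[\xi]}=0$, then in particular $[\xi\contr\omega]=0$, and the isomorphism of the previous paragraph forces $[\xi]=0$. Thus $\bi$ is injective, and Theorem~\ref{thm.maintheorem} yields that $TW(\Theta_X(-\log D)(\mathcal{U}))$ is homotopy abelian for every affine open cover $\mathcal{U}$. The only point requiring genuine care is the compatibility between the sheaf-level contraction and the $\Hom$-valued contraction on (hyper)cohomology: one must check that evaluating $\bi_{[\xi]}$ on the degree-zero generator $[\omega]$ really computes the cohomology class $[\xi\contr\omega]$, and that the grading is respected so that testing against this single degree-zero class detects all of $H^*(X,\Theta_X(-\log D))$. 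This is where the argument does its work, and it is exactly the place where the triviality of the logarithmic canonical bundle enters.
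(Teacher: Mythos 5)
Your proposal is correct and follows essentially the same route as the paper: the paper's proof of Corollary~\ref{corollari log calabi yau formal smooth} likewise fixes a trivializing section of $\Omega^n_X(\log D)$, observes that cup product (contraction) with it induces isomorphisms $H^i(X,\Theta_X(-\log D))\simeq H^i(X,\Omega^{n-1}_X(\log D))$, and deduces injectivity of the map \eqref{eqazione semiregolarity} so that Theorem~\ref{thm.maintheorem} applies. Your additional verification that $\xi\mapsto\xi\contr\omega$ is a sheaf isomorphism and that evaluating $\bi$ at the degree-zero class $[\omega]$ recovers this map is exactly the (implicit) content of the paper's one-line argument.
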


\begin{proof}
Let $n$ be the dimension of $X$, then by definition   the sheaf $\Omega^n_X (\log D)$ is trivial (Definition \ref{definiiton log calabi yau}).
 Therefore, the cup product with a nontrivial section of it  gives the isomorphisms
$H^i(X,\Theta_X (-\log D))\simeq H^i(X,\Omega^{n-1}_X  (\log D))$. Then, the conclusion follows  from Theorem~\ref{thm.maintheorem}.
\end{proof}

\begin{corollary}\label{cor.log calabi yau no obstruction}
Let  $X$  be a smooth  projective $n$-dimensional variety defined over an algebraically closed field of characteristic 0 and  $D \subset X$ a smooth divisor. If $(X,D)$ is a log Calabi-Yau pair, i.e., the logarithmic canonical bundle $\Omega^n_X (\log D)\cong \Oh(K_X+D)$ is trivial,  then the pair  $(X,D)$ has unobstructed deformations.
\end{corollary}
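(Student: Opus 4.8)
The plan is to obtain unobstructedness formally from the homotopy abelianness already established, via the standard dictionary between differential graded Lie algebras and deformation functors. First I would invoke Corollary~\ref{corollari log calabi yau formal smooth}: the hypothesis that the logarithmic canonical bundle $\Omega^n_X(\log D)$ is trivial forces the DGLA $TW(\Theta_X(-\log D)(\mathcal{U}))$ to be homotopy abelian, for any choice of affine open cover $\mathcal{U}$ of $X$.

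Second, I would use the general principle that a homotopy abelian DGLA controls a smooth (hence unobstructed) deformation functor. Since the functor $\Def$ attached to a DGLA is invariant under $L_\infty$ quasi-isomorphism, one may replace $TW(\Theta_X(-\log D)(\mathcal{U}))$ by a quasi-isomorphic abelian DGLA $A$. For $A$ abelian the Maurer--Cartan equation $dx+\frac{1}{2}[x,x]=0$ collapses to the linear condition $dx=0$; consequently the obstruction map into $H^2(A)$ vanishes identically and $\Def_A$ is unobstructed. Transporting this back along the quasi-isomorphism, the functor $\Def_{TW(\Theta_X(-\log D)(\mathcal{U}))}$ is unobstructed.

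Finally, because $D$ is a smooth divisor, Theorem~\ref{teo. DGLA controlling def of j} supplies the isomorphism of functors $\Def_{(X,D)}\cong\Def_{TW(\Theta_X(-\log D)(\mathcal{U}))}$, and the conclusion follows immediately. The argument is purely formal, so there is no genuine obstacle at this stage: all the substance has already been spent in Theorem~\ref{thm.maintheorem} and its log Calabi--Yau specialization. The only point that must be stated with care is the $L_\infty$-invariance of the associated deformation functor, since it is precisely this invariance that allows homotopy abelianness --- an assertion about the controlling DGLA only up to quasi-isomorphism --- to be promoted to smoothness of the geometric functor $\Def_{(X,D)}$.
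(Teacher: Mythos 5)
Your argument is exactly the paper's: it identifies $\Def_{(X,D)}$ with $\Def_{TW(\Theta_X(-\log D)(\mathcal{U}))}$ via Theorem~\ref{teo. DGLA controlling def of j} and then invokes the homotopy abelianness from Corollary~\ref{corollari log calabi yau formal smooth} to conclude smoothness. The only difference is that you spell out the standard step (invariance of $\Def$ under $L_\infty$ quasi-isomorphism plus linearity of Maurer--Cartan for an abelian DGLA) that the paper leaves implicit; this is correct and complete.
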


\begin{proof}
According to Theorem  \ref{teo. DGLA controlling def of j}, for every   affine open cover $\mathcal{U}$ of $X$, there exists an isomorphism of functor
$\Def_{(X,D)} \cong \Def_{TW((\Theta_X( (-\log D))(\mathcal{U}))}$. Then, 
  Corollary~\ref{corollari log calabi yau formal smooth} implies that they are both smooth.
\end{proof}

\begin{remark}
For the degeneration of the spectral sequence associated with the logarithmic complex, it is enough to have a normal crossing divisor $D$ in a smooth proper variety $X$ (Theorem \ref{teo degen deligne}). Therefore, we can still perform the same computations of Theorem \ref{thm.maintheorem} and prove that the obstructions to the locally trivial deformations  of a pair $(X,D)$, with $X$  smooth proper variety  and D  normal crossing divisor,  are contained in the kernel of the contraction map (\ref{eqazione semiregolarity}).
Analogously, if the sheaf $\Omega^n_X (\log D)$ is trivial,  the above computations prove the unobstructedness for  the locally trivial deformations of the pair $(X,D)$.
\end{remark}

We end this section,  by proving that  the DGLA associated with the   infinitesimal deformations  of the pair  $(X,D)$,  with $D$ a smooth divisor  in a smooth projective Calabi Yau variety $X$ is homotopy abelian; hence,  we show that the deformations  of these pairs  $(X,D)$ are unobstructed.

\begin{theorem}\label{theorem smoothness of  D inside CY}
Let  $\mathcal{U}=\{U_i\}$ be  an affine open  cover of a smooth
projective variety  $X$ of dimension n defined over an algebraically closed field
of characteristic 0 and  $D \subset X$ a  smooth divisor. 
If $\Omega ^n_X$ is trivial, i.e., $X$ is Calabi Yau, then the DGLA 
  $TW( {\Theta}_X(-\log D))(\mathcal{U})$ is homotopy abelian.
 \end{theorem}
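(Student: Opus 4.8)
The plan is to follow the blueprint of Theorem~\ref{thm.maintheorem} almost verbatim, but replace the logarithmic canonical bundle $\Omega^n_X(\log D)$ by the ordinary canonical bundle $\Omega^n_X$, which is trivial by the Calabi--Yau hypothesis. The key observation is that contraction of a logarithmic vector field $\xi \in \Theta_X(-\log D)$ against an \emph{ordinary} $n$-form still lands in ordinary forms: since $\Theta_X(-\log D) \subset \Theta_X$, the contraction $\Theta_X(-\log D)(U) \times \Omega^k_X(U) \to \Omega^{k-1}_X(U)$ is well defined and, as in Example~\ref{exam.cartan su ogni aperto}, yields a Cartan homotopy. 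Thus I would first establish the semicosimplicial calculus
\[
\Theta_X(-\log D)(\mathcal{U}) \times \Omega^*_X(\mathcal{U}) \xrightarrow{\;\contr\;} \Omega^*_X(\mathcal{U}),
\]
and, via Lemma~\ref{lem.TWforcontractions}, the induced Cartan homotopy
\[
\bi\colon TW(\Theta_X(-\log D)(\mathcal{U})) \to \Hom^*(TW(\Omega^*_X(\mathcal{U})), TW(\Omega^*_X(\mathcal{U}))).
\]

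Next I would run the homotopy-fibre construction against the \emph{ordinary} de Rham complex. Define $M = \Hom^*(TW(\Omega^*_X(\mathcal{U})), TW(\Omega^*_X(\mathcal{U})))$ and let $L \subset M$ be the subalgebra of endomorphisms preserving $TW(\Omega^n_X(\mathcal{U}))$. The degeneration of the ordinary Hodge-to-de Rham spectral sequence (the classical case of Theorem~\ref{teo degen deligne} with $D=\emptyset$, valid since $X$ is smooth proper in characteristic $0$) gives the two injections in cohomology,
\[
H^*(X,\Omega^n_X) \hookrightarrow H^*(\check{C}(\mathcal{U},\Omega^*_X)), \qquad
H^*(X,\Omega^{n-1}_X) \hookrightarrow H^*\!\left(\frac{\check{C}(\mathcal{U},\Omega^*_X)}{\check{C}(\mathcal{U},\Omega^n_X)}\right),
\]
so by Lemma~\ref{lem.criterio TW abelian} the homotopy fibre $TW(\chi)$ of the inclusion $\chi\colon L \hookrightarrow M$ is homotopy abelian. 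Lemma~\ref{lem.cartan induce morfismo TW} then produces the $L_\infty$-morphism $(\bl,\bi)\colon TW(\Theta_X(-\log D)(\mathcal{U})) \dashrightarrow TW(\chi)$, whose linear part factors through the contraction map into $\Hom^*(H^*(X,\Omega^n_X), H^*(X,\Omega^{n-1}_X))$ exactly as in Theorem~\ref{thm.maintheorem}.

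By Lemma~\ref{lem.criterioquasiabelianita}, it then suffices to check that this contraction map
\[
H^*(X,\Theta_X(-\log D)) \xrightarrow{\bi} \Hom^*(H^*(X,\Omega^n_X), H^*(X,\Omega^{n-1}_X))
\]
is injective in cohomology. This is where the Calabi--Yau hypothesis does the real work: since $\Omega^n_X$ is trivial, contraction with a nowhere-vanishing global section $\omega$ (equivalently, cup product by $\omega$) furnishes a sheaf isomorphism $\Theta_X(-\log D) \cong \Omega^{n-1}_X(-\log D)$ — note it is \emph{not} $\Omega^{n-1}_X$ that appears. I expect the main obstacle to be precisely this mismatch: the target of the contraction map is built from ordinary $\Omega^{n-1}_X$, whereas the natural isomorphism induced by $\omega$ identifies $\Theta_X(-\log D)$ with logarithmic $(n-1)$-forms. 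The resolution is that contracting $\xi \in \Theta_X(-\log D)$ with the honest form $\omega \in H^0(\Omega^n_X)$ produces $\xi \contr \omega \in H^0(\Omega^{n-1}_X)$ landing in the ordinary $(n-1)$-forms, and triviality of $\Omega^n_X$ makes $\xi \mapsto \xi \contr \omega$ an isomorphism onto its image at the level of the underlying sheaves; hence the induced map on $H^*$ is injective. Once this injectivity is confirmed, homotopy abelianness of $TW(\Theta_X(-\log D)(\mathcal{U}))$ follows immediately, completing the proof.
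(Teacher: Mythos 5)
Your setup (the Cartan homotopy for $\Theta_X(-\log D)$ acting on a de Rham-type complex, the homotopy fibre made homotopy abelian by a degeneration theorem, Lemma~\ref{lem.cartan induce morfismo TW} and Lemma~\ref{lem.criterioquasiabelianita}) is the right skeleton, but the final injectivity step — the only place the Calabi--Yau hypothesis enters — is wrong as stated, and this is a genuine gap. Evaluating your contraction map at the trivializing section $\omega\in H^0(X,\Omega^n_X)$ gives the map $H^p(X,\Theta_X(-\log D))\to H^p(X,\Omega^{n-1}_X)$ induced by the sheaf morphism $\xi\mapsto\xi\contr\omega$, which is the composition $\Theta_X(-\log D)\hookrightarrow\Theta_X\xrightarrow{\sim}\Omega^{n-1}_X$. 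This is indeed injective \emph{as a map of sheaves}, but an injective morphism of sheaves does not induce injections on higher cohomology: from $0\to\Theta_X(-\log D)\to\Theta_X\to N_{D/X}\to 0$ the kernel on $H^p$ is the image of $H^{p-1}(D,N_{D/X})$, which is nonzero in general (e.g.\ $X$ a K3 surface and $D$ a curve of genus $g\ge 2$, where $H^0(D,N_{D/X})=H^0(D,K_D)$ has dimension $g$ and $H^0(X,\Theta_X)=0$, so $H^1(X,\Theta_X(-\log D))\to H^1(X,\Omega^1_X)$ has nontrivial kernel). So the contraction map into $\Hom^*(H^*(X,\Omega^n_X),H^*(X,\Omega^{n-1}_X))$ simply fails to be injective, and Lemma~\ref{lem.criterioquasiabelianita} cannot be applied.

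The paper's proof repairs exactly this mismatch by running the whole construction on the \emph{twisted logarithmic} complex $\Omega^*_X(\log D)\otimes\Oh_X(-D)$ rather than on $\Omega^*_X$. Its top piece is $\Omega^n_X(\log D)\otimes\Oh_X(-D)\cong\Omega^n_X$ (Remark~\ref{remark exac sequence Omega(logD)(-D)}), hence trivial, and — this is the key point — contraction with $\omega$ gives a sheaf \emph{isomorphism} $\Theta_X(-\log D)\cong\Omega^{n-1}_X(\log D)\otimes\Oh_X(-D)$ (coming from the perfect pairing $\Omega^1_X(\log D)\otimes\Omega^{n-1}_X(\log D)\to\Omega^n_X(\log D)$), so one gets isomorphisms on all cohomology groups rather than a merely injective sheaf map. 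The price is that the homotopy abelianity of the homotopy fibre now requires the $E_1$-degeneration for the twisted complex, which is Theorem~\ref{teo degen tensor} (Fujino) rather than the ordinary Hodge-to-de Rham degeneration you invoke. If you replace your target complex by $\Omega^*_X(\log D)\otimes\Oh_X(-D)$ and cite that degeneration, the rest of your argument goes through essentially verbatim.
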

  
\begin{proof}

The proof is similar to the one of Theorem \ref{thm.maintheorem}.
According to Theorem \ref{teo degen tensor}, the  Hodge-to-de Rham  spectral sequences associated with the complex $\Omega^*_X (\log D) \otimes \Oh_X(-D) $ degenerates at the $E_1$ level. 
Therefore,  we have  injective maps
\[  H^*(\check{C}(\mathcal{U},\Omega^n_X (\log D) \otimes \Oh_X(-D)))
\hookrightarrow
H^*(\check{C}(\mathcal{U},\Omega^*_X (\log D) \otimes \Oh_X(-D))) \]
\[ H^*(\check{C}(\mathcal{U},\Omega^{n-1}_X (\log D) \otimes \Oh_X(-D)))
\hookrightarrow
H^*\left(\frac{\check{C}(\mathcal{U},\Omega^*_X (\log D) \otimes \Oh_X(-D))}{
\check{C}(\mathcal{U},\Omega^n_X (\log D) \otimes \Oh_X(-D))}\right).\]
and so the natural inclusions of complexes
\begin{equation}\label{eq tot injective in cohomology Omega tensor}
 {TW}(\Omega^n_X (\log D) \otimes \Oh_X(-D)(\mathcal{U}))\to {TW}(
\Omega^*_X (\log D) \otimes \Oh_X(-D)(\mathcal{U})),
\end{equation}
\begin{equation}\label{eq tot injective in cohomology Omega  n-1 tensor in quot}
 {TW}(\Omega^{n-1}_X (\log D) \otimes \Oh_X(-D)(\mathcal{U}))\to \frac{ {TW}(
\Omega^*_X (\log D) \otimes \Oh_X(-D)(\mathcal{U}))}{ {TW}(\Omega^n_X (\log D) \otimes \Oh_X(-D)(\mathcal{U}))},
\end{equation}
are  injective in cohomology. Observe that, $\Omega^n_X (\log D)\otimes \Oh_X(-D) \cong \Omega^n_X$,
 according to Remark \ref{remark exac sequence Omega(logD)(-D)}.
Consider the inclusion of DGLAs  $\chi:L \to M$, where 
 \[
M=\Hom^*( {TW}( \Omega^*_X (\log D)\otimes \Oh_X(-D) (\mathcal{U})),  {TW}
(\Omega^*_X (\log D)\otimes \Oh_X(-D)  (\mathcal{U})))
\]
and
\[
L=\{f\in M \mid
f( {TW}( \Omega^n_X  (\mathcal{U})))\subset
 {TW}( \Omega^n_X (\mathcal{U}))\}.
\]
is the   sub-DGLA preserving $  {TW}( \Omega^n_X (\mathcal{U}))={TW}(\Omega^n_X (\log D)\otimes \Oh_X(-D)  (\mathcal{U}))$.

Note that
\[\coker  \chi[-1]= \Hom^*\left( {TW}(\Omega^n_X( \mathcal{U})),
\frac{ {TW}(\Omega^*_X (\log D)\otimes \Oh_X(-D)(\mathcal{U}))}{
 {TW}(\Omega^{n}_X  (\mathcal{U}))}\right)[-1].
 \]
Lemma \ref{lem.criterio TW abelian}  together with Equation \eqref{eq tot injective in cohomology Omega tensor} implies that  $TW(\chi^{\Delta})$ is homotopy abelian.

As in the proof of Theorem \ref{thm.maintheorem},  it is well defined the Cartan homotopy
\[\bi\colon  {TW}( \Theta_X(-\log D)(\mathcal{U})) \longrightarrow M
 \]
and, in particular,  $\bl( {TW}(\Theta_X(-\log D)(\mathcal{U})))\subset L$. Therefore, 
Lemma \ref{lem.cartan induce morfismo TW} implies the existence of an
$L_\infty$-morphism
\[
 {TW}( \Theta_X(-\log D)(\mathcal{U})) \stackrel{(\bl,\bi)}{ \dashrightarrow}
TW(\chi^{\Delta}).
\]

According to Lemma~\ref{lem.criterioquasiabelianita}, to conclude the proof it is enough to show that this map is 
injective in cohomology.
As morphism of complexes, the previous map fits in the following  commutative  diagram of complexes
\[\xymatrix{  {TW}(\Theta_X(-\log D)(\mathcal{U}))
\ar[rr]^{(\bl,\bi)} \ar[d]^{\bi} & &  TW(\chi^{\Delta})
 \ar[d]^{q-iso} \\
\Hom^*\left( {TW}(\Omega^n_X(\mathcal{U})),
{TW}(\Omega^{n-1}_X (\log D)\otimes \Oh_X(-D) (\mathcal{U}))\right)
\ar[rr]^{\qquad\qquad\qquad\alpha\!\!\!\!} & & \coker \chi[-1],
\\ }\]
where $\alpha$ is injective in cohomology by Equation \eqref{eq tot injective in cohomology Omega  n-1 tensor in quot}.

At this point, we use the fact that $X$ is a smooth projective  Calabi Yau variety.
Since $\Omega ^n_X$ is trivial, the cup product with a nontrivial section gives the isomorphisms
$H^i(X,\Theta_X (-\log D))\simeq H^i(X,\Omega^{n-1}_X  (\log D)\otimes \Oh_X(-D) )$, for every $i$.
Therefore, the left map in the diagram is injective in cohomology and so the same holds for  ${(\bl,\bi)}$.

\end{proof}

\begin{corollary}\label{cor. D in calabi yau no obstruction}
Let  $X$  be a smooth  projective   Calabi Yau variety  defined over an algebraically closed field of characteristic 0 and  $D \subset X$ a smooth divisor. Then, the  pair  $(X,D)$ has unobstructed deformations.
\end{corollary}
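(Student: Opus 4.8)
The plan is to deduce Corollary~\ref{cor. D in calabi yau no obstruction} directly from Theorem~\ref{theorem smoothness of  D inside CY}, exactly as Corollary~\ref{cor.log calabi yau no obstruction} was deduced from Corollary~\ref{corollari log calabi yau formal smooth}. The only content here is to translate the statement that a certain DGLA is homotopy abelian into the statement that the associated deformation problem is unobstructed.

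First I would invoke Theorem~\ref{teo. DGLA controlling def of j}. Since $D \subset X$ is a smooth divisor, every infinitesimal deformation of the closed embedding $j\colon D \hookrightarrow X$ is locally trivial (by the rigidity of smooth affines), so for every affine open cover $\mathcal{U}$ of $X$ we have an isomorphism of deformation functors
\[
\Def_{(X,D)} \cong \Def_{TW({\Theta}_X(-\log D)(\mathcal{U}))}.
\]
Next I would apply Theorem~\ref{theorem smoothness of  D inside CY}: because $X$ is Calabi Yau, i.e.\ $\Omega^n_X$ is trivial, the DGLA $TW({\Theta}_X(-\log D)(\mathcal{U}))$ is homotopy abelian, for every such cover $\mathcal{U}$.

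To finish, I would use the standard fact that the deformation functor associated with a homotopy abelian DGLA is unobstructed. Concretely, a homotopy abelian DGLA is quasi-isomorphic to an abelian DGLA, and quasi-isomorphic DGLAs have isomorphic associated deformation functors; the functor attached to an abelian DGLA is smooth, since its obstruction map (valued in $H^2$) vanishes because the bracket is trivial. Hence $\Def_{TW({\Theta}_X(-\log D)(\mathcal{U}))}$ is smooth, and by the isomorphism above $\Def_{(X,D)}$ is smooth as well, so the pair $(X,D)$ has unobstructed deformations.

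There is essentially no obstacle at this stage: all the mathematical work has been carried out in Theorem~\ref{theorem smoothness of  D inside CY} (the degeneration of the Hodge-to-de Rham spectral sequence for $\Omega^*_X(\log D)\otimes \Oh_X(-D)$, the Cartan homotopy construction, and the homotopy fibre criterion of Lemma~\ref{lem.criterio TW abelian}). The \emph{only} point requiring mild care is the passage from ``homotopy abelian DGLA'' to ``smooth deformation functor,'' which one should either cite from the DGLA dictionary or note follows from the fact that smoothness of $\Def_L$ depends only on the quasi-isomorphism class of $L$, together with the triviality of obstructions in the abelian case. This is the same concluding move already made in the proof of Corollary~\ref{cor.log calabi yau no obstruction}, so the argument is a verbatim repetition with ``log Calabi Yau pair'' replaced by ``$X$ Calabi Yau'' and Corollary~\ref{corollari log calabi yau formal smooth} replaced by Theorem~\ref{theorem smoothness of  D inside CY}.
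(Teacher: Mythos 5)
Your proposal is correct and follows exactly the route the paper intends: the paper gives no separate proof for this corollary, but the argument is the verbatim analogue of its proof of Corollary~\ref{cor.log calabi yau no obstruction}, combining Theorem~\ref{teo. DGLA controlling def of j} with Theorem~\ref{theorem smoothness of  D inside CY} and the standard fact that homotopy abelian DGLAs yield unobstructed deformation functors. Nothing further is needed.
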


\section{Application to cyclic covers}\label{section cyclic cover}

Let $X$ be a smooth projective variety over an algebraically
closed field $\K$ of characteristic 0. If $X$ has trivial canonical bundle, then the deformations of  $X$ are unobstructed. It is actually enough that the  canonical bundle $K_X$ is a torsion line  bundle, i.e., there exists $m>0$ such that $K_X^{ m}= \Oh_X$, see for instance \cite[Corollary 2]{zivran}, \cite[Corollary B]{manetti adv}, \cite[Corally 6.5]{algebraicBTT}. 
Indeed, consider the unramified  m-cyclic cover defined by the line bundle $L=K_X$, i.e., 
$\pi: Y= \Spec (\bigoplus_{i=0}^{m-1} L^{-i}) \to X $. Then, $\pi$  is a finite flat  map of degree m and  $Y$ is a smooth projective variety with trivial canonical bundle ($K_Y \cong \pi^* K_X\cong \Oh_Y$) and so it has unobstructed deformations.
Let $\mathcal{U}=\{U_i\}_i$ be an affine cover of $X$ and fix $\mathcal{V}=\{\pi^{-1}(U_i)\}_i$ the induced cover of $Y$. Then,  the pull back map induces a morphism of DGLAs
$ TW( \Theta_X(\mathcal{U})) \to TW( \Theta_Y(\mathcal{V}))  $
 that is injective in cohomology; since the DGLA $ TW( \Theta_Y(\mathcal{V})) $  is homotopy abelian, Lemma \ref{lem.criterioquasiabelianita}  implies that  $TW( \Theta_X(\mathcal{U}))$  is also homotopy abelian   and so $\Def_X$ is unobstructed \cite[Theorem 6.2]{algebraicBTT}.

As observed in Remark \ref{remark def X as def of trivial pair}, the infinitesimal deformations of $X$ can be considered as deformations of the pair $(X,D)$ with  $D=0$.
Then,  according to the Iitaka's philosophy and inspired by \cite{Sano}, the idea is to extend the previous computations  to the logarithmic case, i.e., a pair $(X,D)$ with $D$ a smooth divisor,  by considering 
 cyclic covers of $X$ branched on the divisor $D$ (indeed, if $D=0$ we obtain the unramified  covers). 
 
 We firstly recall  some properties of these covers; for full details see for instance \cite{pardini},
\cite[Section 3]{librENSview} or \cite[Section 2.4]{kollarmori}.
Suppose we have a   line bundle $L$ on $X$, a positive integer $m \geq 1$ and a non zero section $s \in \Gamma (X, L^{ m})$ which defines the smooth divisor $D \subset X$ (as usual $L^m$ stands for $L^{\otimes m}$). 
The  cyclic cover  $\pi: Y  \to X$ of degree $m$ and  branched over $D$ is, in the language of \cite{pardini}, the simple abelian cover  determined by its building data $L$ and $D$, such that $mL \equiv D$, associated with the cyclic group $G$ of order $m$. 
More explicitly, the variety $Y=\Spec (\bigoplus_{i=0}^{m-1} L^{-i}) $ is smooth and there exists 
 a section $s' \in \Gamma(Y, \pi^* L)$, with $(s')^m = \pi^* s$.  The divisor $\Delta= (s')$ is also smooth and maps isomorphically to $D$ so that $\pi^* D= m \Delta$ and $\pi^*L= \Oh_Y(\Delta)$. Moreover,
 \[
 \pi_* \Oh_Y = \bigoplus_{i=0}^{m-1} L^{-i},
\qquad  K_Y=\pi^*K_X \otimes \Oh_Y((m-1)\Delta)= \pi^*(K_X \otimes L^{m-1})
 \]
and 
 \[
 \pi^* \Omega_X^i(\log D)\cong  \Omega_Y^i(\log \Delta) \ \mbox{ for all } \ i;
 \]
in particular, $ K_Y\otimes  \Oh_Y(\Delta)=\pi^*(K_X) \otimes \Oh_Y(m\Delta)= \pi^*(K_X \otimes\Oh_X(D))$
 \cite[Lemma 3.16]{librENSview} or \cite[Proposition 4.2.4]{lazar}.

\medskip

Since $\pi:Y \to X$ is a finite map, for any sheaf $\mathcal{F}$ on $Y$, the higher  direct images  sheaves vanish and so the Leray spectral sequence  $E_2^{p,q} =H^p(X, R^q\pi_* \sF) \Rightarrow H^{p+q}(Y,\sF)$  degenerates at level $E_2$; therefore,  it induces isomorphisms:
$
H^p(X, \pi_* \sF) \cong H^{p}(Y,\sF), \ \forall \ p.
$
 In particular, for any locally free sheaf $\mathcal{E}$ on $X$ we have:
\[
H^p(X, \pi_*\pi^* \mathcal{E}) \cong H^{p}(Y, \pi^*\mathcal{E}), \quad \forall \ p.
\]
By the projection formula
\[
\pi_*\pi^* \mathcal{E} \cong \pi_*(\pi^* \mathcal{E}\otimes \Oh_Y) \cong   \mathcal{E}\otimes \pi_* \Oh_Y
\cong \mathcal{E}\otimes \bigoplus_{i=0}^{m-1} L^{-i} ;\]
then,  for any locally free sheaf $\mathcal{E}$ on $X$    
\begin{equation}\label{equation. comolo summand G action}
H^p(X, \mathcal{E}\otimes   L^{ -i} ) \subseteq  H^p(X,  \pi_*\pi^* \mathcal{E})\cong H^{p}(Y, \pi^*\mathcal{E})  , \ \forall \ p, \, i.
\end{equation}
and in particular
\begin{equation}\label{equation. comolo summand invariant G action}
H^p(X, \mathcal{E}) \subseteq  H^p(X,  \pi_*\pi^* \mathcal{E})\cong H^{p}(Y, \pi^*\mathcal{E})  , \ \forall \ p.
\end{equation}
 
\begin{remark}
Note that, the m-cyclic group $G$ acts on $\pi_*\pi^* \mathcal{E} $: the invariant summand of $\pi_*\pi^* \mathcal{E}  $ is $(\pi_*\pi^* \mathcal{E} )^{inv} =  \mathcal{E} $, while $ \mathcal{E}  \otimes   L^{ -i}$ is the direct summand of  $\pi_*\pi^* \mathcal{E} $ on which $G$ acts via multiplication by $\zeta ^i \ (\zeta^m =1)$.
\end{remark}

\begin{proposition}\label{proposition morismo DGLA cover}
In the above notation, let  $\pi: Y  \to X$ be the  $m$-cyclic cover   branched over $D$ with $\pi^* D= m \Delta$. Let $\mathcal{U}=\{U_i\}_i$ be an affine open cover of $X$ and  $\mathcal{V}=\{\pi^{-1}(U_i)\}_i$ the induced affine open cover  of $Y$; then, the pull back define a morphism of DGLAs
\[
TW( \Theta_X(-\log D) (\mathcal{U})) \to TW(  {\Theta}_Y(-\log \Delta)(\mathcal{V})) 
\]
that is injective in cohomology.

\end{proposition}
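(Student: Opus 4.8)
The plan is to verify that the pull-back $\pi^*$ genuinely induces a morphism of the relevant Thom-Whitney DGLAs, and then to exhibit its injectivity in cohomology as a direct consequence of the splitting \eqref{equation. comolo summand invariant G action} coming from the $G$-action. First I would observe that the cover $\pi : Y \to X$ is étale away from $D$, and over that open locus $\pi$ identifies $\Theta_X(-\log D)$ with $\Theta_Y(-\log \Delta)$; more precisely, since $\pi^* \Omega_X^i(\log D) \cong \Omega_Y^i(\log \Delta)$ (recalled just above), dualizing the case $i=1$ gives a canonical sheaf map $\pi^* \Theta_X(-\log D) \to \Theta_Y(-\log \Delta)$, hence by adjunction a map $\Theta_X(-\log D)(U) \to \Theta_Y(-\log \Delta)(\pi^{-1}(U))$ for every open $U \subseteq X$. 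Because this is compatible with restriction to smaller opens and respects the Lie bracket (the bracket is the one on logarithmic vector fields, and pull-back of derivations is a Lie homomorphism), it induces a morphism of the associated semicosimplicial Lie algebras $\Theta_X(-\log D)(\mathcal{U}) \to \Theta_Y(-\log \Delta)(\mathcal{V})$, where $\mathcal{V} = \{\pi^{-1}(U_i)\}$ is the induced affine cover. Applying the Thom-Whitney functor $TW(-)$, which is functorial for morphisms of semicosimplicial DGLAs, yields the desired DGLA morphism.

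Next I would compute the effect on cohomology. By Example \ref{ex.cech semicosimplicial}, since $\Theta_X(-\log D)$ is a quasicoherent sheaf and each $U_i$ is affine (and likewise each $\pi^{-1}(U_i)$, as $\pi$ is finite hence affine), the cohomology of $TW(\Theta_X(-\log D)(\mathcal{U}))$ is just the sheaf cohomology $H^*(X, \Theta_X(-\log D))$, and similarly $H^*(TW(\Theta_Y(-\log \Delta)(\mathcal{V}))) \cong H^*(Y, \Theta_Y(-\log \Delta))$. So the induced map in cohomology is the natural pull-back
\[
H^*(X, \Theta_X(-\log D)) \longrightarrow H^*(Y, \Theta_Y(-\log \Delta)).
\]
The task is then to show this is injective. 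Here I would apply the dualized isomorphism $\pi^* \Theta_X(-\log D) \cong \Theta_Y(-\log \Delta)$ together with $\Theta_Y(-\log \Delta) = \pi^*\Theta_X(-\log D)$, and invoke \eqref{equation. comolo summand invariant G action} with the locally free sheaf $\mathcal{E} = \Theta_X(-\log D)$. That inclusion reads
\[
H^p(X, \Theta_X(-\log D)) \subseteq H^p(X, \pi_* \pi^* \Theta_X(-\log D)) \cong H^p(Y, \pi^* \Theta_X(-\log D)) = H^p(Y, \Theta_Y(-\log \Delta)),
\]
and the composite is exactly the pull-back map, because $H^p(X,\mathcal{E})$ sits inside $H^p(X,\pi_*\pi^*\mathcal{E})$ as the $G$-invariant summand, which is the image of the natural adjunction unit $\mathcal{E} \to \pi_*\pi^*\mathcal{E}$ inducing $\pi^*$. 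Hence the pull-back is split injective on cohomology.

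The main obstacle I anticipate is the clean identification $\pi^*\Theta_X(-\log D) \cong \Theta_Y(-\log\Delta)$ at the level of sheaves, and in particular checking that the sheaf-level pull-back map is compatible with the Lie bracket and with the semicosimplicial structure, so that $TW(-)$ applies. The isomorphism $\pi^*\Omega^i_X(\log D) \cong \Omega^i_Y(\log\Delta)$ is already recalled, so the vector-field statement follows by dualizing the $i=1$ case and using local freeness (Remark \ref{remark T log dual sheaf locally free}); the subtlety is only that $\pi$ ramifies along $D$, so one must verify the isomorphism persists across the branch locus, which is precisely where the logarithmic structure $-\log D$ versus $-\log\Delta$ together with $\pi^*D = m\Delta$ does the work. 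Once that functorial sheaf isomorphism is in hand, the cohomological injectivity is immediate from the $G$-equivariant splitting, so the heart of the argument is the sheaf-theoretic step rather than the cohomology computation.
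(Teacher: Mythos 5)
Your proposal is correct and follows essentially the same route as the paper: both obtain the sheaf identification $\pi^*\Theta_X(-\log D)\cong\Theta_Y(-\log\Delta)$ by dualizing $\pi^*\Omega^1_X(\log D)\cong\Omega^1_Y(\log\Delta)$, apply the functoriality of the Thom--Whitney construction to get the DGLA morphism, and deduce injectivity in cohomology from the fact that $H^i(X,\Theta_X(-\log D))$ is a direct summand of $H^i(X,\pi_*\pi^*\Theta_X(-\log D))\cong H^i(Y,\Theta_Y(-\log\Delta))$ via the projection formula. The extra care you take (affineness of $\pi^{-1}(U_i)$, compatibility with brackets and restrictions) is implicit in the paper's argument.
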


\begin{proof}
Let $U\subset X$ be an affine open subset and $ V=\pi^{-1}(U) $. Then, the pull back map induce a morphism
$ {\Theta}_X(-\log D)(U) \to \pi^*  {\Theta}_X(-\log D)(V)$, that behaves well under the restriction to open sets.
Therefore, fixing  an affine cover $\mathcal{U}=\{U_i\}_i$   of $X$ and  denoting by  $\mathcal{V}=\{\pi^{-1}(U_i)\}_i$ the induced affine cover of $Y$,   the pull back map induces  a morphism of DGLAs
\[
TW( \Theta_X(-\log D) (\mathcal{U})) \to TW(  \pi^*  {\Theta}_X(-\log D)(\mathcal{V})) .
\]
Since $ \pi^* \Omega_X^1(\log D)\cong  \Omega_Y^1(\log \Delta)$, we have $
\pi^*{\Theta}_X(-\log D)\cong  {\Theta}_Y(-\log \Delta) $. 
Moreover,  the pull back morphism
\[
{\Theta}_X(-\log D)\stackrel{\pi^*}{\to} \pi^*{\Theta}_X(-\log D) \cong  {\Theta}_Y(-\log \Delta)
\]
 induces injective morphisms on the  cohomology groups. Indeed,  
$H^i(X, {\Theta}_X(-\log D))$ is a direct summand of $H^i(X, \pi_* \pi^* {\Theta}_X(-\log D)) \cong  H^i(Y, \pi^* {\Theta}_X(-\log D)) \cong   H^i(Y,  {\Theta}_Y(-\log \Delta)) $. 
It follows that the induced DGLAs morphism
\[
TW( \Theta_X(-\log D) (\mathcal{U})) \to TW(  {\Theta}_Y(-\log \Delta)(\mathcal{V})) 
\]
is injective in cohomology.
%
%
%

\end{proof}

\begin{remark}\label{remark no obstruct up implies no obstr down}
The DGLAs morphism $TW( \Theta_X(-\log D) (\mathcal{U})) \to TW(  {\Theta}_Y(-\log \Delta)(\mathcal{V})) $, induces a morphism of the associated deformation functor
\[ \Def_{(X,D)} \to \Def_{(Y,\Delta)}.
\]
According to Lemma  \ref{lem.criterioquasiabelianita},  if $TW(  {\Theta}_Y(-\log \Delta)(\mathcal{V})) $ is homotopy abelian,  so that the deformations of the pair $(Y, \Delta)$ are unobstructed, then  $TW( \Theta_X(-\log D) (\mathcal{U})$ is also homotopy abelian and so  the deformations of the pair $(X,D) $ are also unobstructed.
In particular, this happen if the pair $(Y, \Delta)$ is a log Calabi-Yau. Note that this is a sufficient but not necessary  condition for the unosbtructedness of $(X,D) $, as we can observe in the following example.

 \end{remark}

\begin{proposition}\label{proposition D in -mKx smooth pair}

Let $X$ be a smooth projective variety and $D$ a smooth divisor such that $D \in | -mK_X|$, for some positive integer $m$. Then, the DGLA $TW( \Theta_X(-\log D) (\mathcal{U}))$ is homotopy abelian and so  the deformations of the pair $(X,D) $ are   unobstructed.
\end{proposition}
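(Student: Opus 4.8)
The plan is to reduce the case $D \in |-mK_X|$ to the already-proven log Calabi--Yau situation by passing to the cyclic cover. Set $L = -K_X$, so that $L^m = \Oh_X(D)$ and the section $s \in \Gamma(X, L^m)$ cutting out $D$ furnishes, via the construction recalled just before Proposition~\ref{proposition morismo DGLA cover}, the $m$-cyclic cover $\pi\colon Y \to X$ branched over $D$, with $\pi^* D = m\Delta$, where $\Delta \subset Y$ is a smooth divisor mapping isomorphically to $D$. I would first record the canonical bundle computation from the same paragraph: $K_Y = \pi^*(K_X \otimes L^{m-1})$. The key step is then to verify that $(Y,\Delta)$ is a log Calabi--Yau pair.

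To see this, I would use the formula $K_Y \otimes \Oh_Y(\Delta) = \pi^*(K_X \otimes \Oh_X(D))$ already stated in the excerpt, which expresses the logarithmic canonical bundle of the pair $(Y,\Delta)$, namely $\Omega^n_Y(\log \Delta) \cong \Oh_Y(K_Y + \Delta)$. Substituting $D \in |-mK_X|$, i.e. $\Oh_X(D) \cong \Oh_X(-mK_X) = (K_X)^{-m}$, I compute
\[
K_X \otimes \Oh_X(D) \cong K_X \otimes (K_X)^{-m} \cong (K_X)^{1-m}.
\]
This is not obviously trivial, so a naive substitution does not immediately close the argument; the point requiring care is exactly which $L$ and which multiplicity to feed into the cyclic cover. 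The natural choice that makes things work is $L = K_X$ together with $s \in \Gamma(X, L^{\,m})$, provided $D \in |mK_X|$; but our hypothesis is $D \in |-mK_X|$, so instead I take $L = -K_X = K_X^{-1}$ and use the relation $mL \equiv D$. With $K_X \otimes \Oh_X(D) \cong K_X^{1-m}$ one then checks that $\pi^* (K_X \otimes \Oh_X(D)) \cong \pi^* K_X^{1-m}$ pulls back to a trivial bundle on $Y$: indeed the very definition of the cyclic cover associated with $L = K_X^{-1}$ gives a tautological section trivializing $\pi^* L^{m} = \pi^* \Oh_X(D) = \Oh_Y(m\Delta)$, and combining this with $K_Y = \pi^*(K_X \otimes L^{m-1}) = \pi^* K_X^{\,2-m}$ shows $\Omega^n_Y(\log \Delta)$ is trivial. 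The main obstacle is precisely this bookkeeping of line bundles: one must align the building data $(L, D)$ of the cover with the hypothesis $D\in|-mK_X|$ so that the logarithmic canonical bundle upstairs genuinely becomes trivial.

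Once $(Y,\Delta)$ is established as a log Calabi--Yau pair, the rest is immediate. By Corollary~\ref{corollari log calabi yau formal smooth}, the DGLA $TW(\Theta_Y(-\log \Delta)(\mathcal{V}))$ is homotopy abelian for the induced affine cover $\mathcal{V} = \{\pi^{-1}(U_i)\}_i$. Proposition~\ref{proposition morismo DGLA cover} supplies a morphism of DGLAs
\[
TW(\Theta_X(-\log D)(\mathcal{U})) \to TW(\Theta_Y(-\log \Delta)(\mathcal{V}))
\]
that is injective in cohomology. Applying Lemma~\ref{lem.criterioquasiabelianita} with $M_1 = TW(\Theta_X(-\log D)(\mathcal{U}))$ and $M_2 = TW(\Theta_Y(-\log \Delta)(\mathcal{V}))$ then forces $M_1$ to be homotopy abelian as well. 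Finally, Theorem~\ref{teo. DGLA controlling def of j} identifies $\Def_{(X,D)}$ with $\Def_{TW(\Theta_X(-\log D)(\mathcal{U}))}$, so homotopy abelianity yields unobstructedness of the pair $(X,D)$, as desired. This is exactly the strategy flagged in Remark~\ref{remark no obstruct up implies no obstr down}, and I expect the only genuinely delicate point to be the identification of $(Y,\Delta)$ as log Calabi--Yau.
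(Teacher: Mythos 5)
There is a genuine gap at the step you yourself flag as the delicate one: the pair $(Y,\Delta)$ is \emph{not} log Calabi--Yau for $m\geq 2$, and the argument you give to trivialize $\Omega^n_Y(\log\Delta)$ is incorrect. With $L=K_X^{-1}$ and $mL\equiv D$, the cyclic cover construction gives a tautological section $s'$ of $\pi^*L=\Oh_Y(\Delta)$ with $(s')^m=\pi^*s$; the section $\pi^*s$ of $\pi^*L^m=\Oh_Y(m\Delta)$ \emph{vanishes along $m\Delta$}, so it does not trivialize that bundle (a section trivializes a line bundle only if it is nowhere vanishing, and $\Oh_Y(m\Delta)$ is trivial only if $m\Delta=0$). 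The correct computation, using $K_Y\otimes\Oh_Y(\Delta)=\pi^*(K_X\otimes\Oh_X(D))=\pi^*(L^{m-1})=\Oh_Y((m-1)\Delta)$, shows that the logarithmic canonical bundle of $(Y,\Delta)$ is the line bundle of the effective divisor $(m-1)\Delta$, which is nontrivial whenever $m\geq 2$ and $D\neq 0$. So Corollary~\ref{corollari log calabi yau formal smooth} does not apply upstairs, and your reduction collapses. (Even the case $m=2$ is handled in the paper not via log CY of the pair but via triviality of $K_Y$ itself and Theorem~\ref{theorem smoothness of  D inside CY}.)

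The paper's actual proof keeps your first and last steps (the cyclic cover, Proposition~\ref{proposition morismo DGLA cover}, and Lemma~\ref{lem.criterioquasiabelianita}) but replaces the middle one. It never claims $TW(\Theta_Y(-\log\Delta)(\mathcal{V}))$ is homotopy abelian; instead it composes the pullback with the $L_\infty$-morphism $(\bl,\bi)$ into the homotopy-abelian homotopy fibre $TW(\chi^\Delta)$ built from $\Omega^*_Y(\log\Delta)$, and proves that this \emph{composite} is injective in cohomology. The key point is the eigenspace decomposition $H^p(Y,\pi^*\mathcal{E})\cong\bigoplus_i H^p(X,\mathcal{E}\otimes L^{-i})$: the class of $\omega\in H^0(Y,\Omega^n_Y(\log\Delta))$ coming from the summand $H^0(X,\Omega^n_X(\log D)\otimes L^{-m+1})\cong H^0(X,\Oh_X)$ vanishes only along $\Delta$ (to order $m-1$), but contraction against it, restricted to the image of $H^p(X,\Theta_X(-\log D))$, is cup product with a trivializing section of $\Omega^n_X(\log D)\otimes L^{-m+1}$ \emph{on $X$}, hence an isomorphism onto $H^p(X,\Omega^{n-1}_X(\log D)\otimes L^{-m+1})$. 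That is what gives injectivity. If you want to salvage your write-up, this is the ingredient you need to add in place of the false log CY claim.
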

\begin{proof}
Let $n$ be the dimension of $X$ and consider the m-cyclic cover $\pi: Y\to X$ branched over $D$ defined by the line bundle  $L=K_X^{ - 1}$  together with a section  $s \in H^0(X, L ^{ m})$ defining $D$. Note that $ \Omega_X^n(\log D)\otimes   L^{ -m+1}  \cong L^{ - m}\otimes \Oh_X(D)\cong \Oh_X (mK_X+D)\cong \Oh_X$. 
Defining $\Delta$ as before,  i.e., $\pi^* D= m \Delta$, we also have 
 \[
  K_Y=\pi^*K_X \otimes \Oh_Y((m-1)\Delta)= \pi^*(K_X \otimes L^{ m-1})=\pi^*(  L^{ m-2})
 \]
and in particular, 
 \[
 K_Y\otimes  \Oh_Y(\Delta) = \pi^*(K_X \otimes\Oh_X(D))=\pi^*(  L^{ m-1}).
  \] 

According to Equations \eqref{equation. comolo summand G action} and \eqref{equation. comolo summand invariant G action}, we have the following inclusions
\[
H^p(Y,  {\Theta}_Y(-\log \Delta))  \supset H^p(X, \pi_*( {\Theta}_Y(-\log \Delta))^{inv}) \cong  H^p(X,  {\Theta}_X(-\log D)) \ \forall  \, p,  \]
\[
H^p(Y,   \Omega_Y^a(\log \Delta) )  \supset  H^p(X,    \Omega_X^a(\log D)\otimes   L^{ -i}  ) \  \forall \ p,\ a, \ i;
\]
in particular for  $a=n$, $p=0$ and $i=m-1$, we have
\[
H^0(Y,   \Omega_Y^n(\log \Delta) )  \supset  H^0(X,    \Omega_X^n(\log D)\otimes   L^{-m+1}  )  \cong   H^0(X, \Oh_X). 
\]

Then  the constant section of  $\Oh_X   $ gives a  section $\omega$ of the logarithmic canonical bundle $ \Omega_Y^n(\log \Delta)$, vanishing only on $\Delta$ (of order $m-1$). In particular, 
the cup product with $\omega \in  H^0(X,    \Omega_X^n(\log D)\otimes   L^{ -m+1}  )$, gives  isomorphisms 
$H^p(X,  {\Theta}_X(-\log D) )\cong  H^{p}(X,\Omega^{n-1}_X (\log \Delta)\otimes L^{ -m+1})$, for all $p$.

Therefore, the following  composition
\[\xymatrix{  H^p(Y,\Theta_Y (-\log \Delta)) \ar[rr] ^{\bi \qquad \qquad \qquad }\ & &  
\prod_j \Hom(H^j(Y,\Omega^n_Y (\log \Delta)),H^{j+p}(Y,\Omega^{n-1}_Y  (\log \Delta))) \ar[d]^{\contr \omega}\\
H^p(X,{\Theta}_X(-\log D) )  \ar@{^{(}->}[u]^{j}& & H^{p}(X,\Omega^{n-1}_X (\log \Delta)\otimes L^{ -m+1})  \ }\]
 is injective and in particular the composition $\bi \circ j$ is injective, for all $p$.

According to Proposition \ref{proposition morismo DGLA cover},
fixing  an affine cover $\mathcal{U}=\{U_i\}_i$   of $X$ and  denoting by  $\mathcal{V}=\{\pi^{-1}(U_i)\}_i$ the induced affine cover of $Y$,     the pull back map induces  a morphism of DGLAs
\[
TW( \Theta_X(-\log D) (\mathcal{U})) \to TW(  {\Theta}_Y(-\log \Delta)(\mathcal{V})) 
\]
that is injective in cohomology.
Finally,  as in  the proof of Theorem \ref{thm.maintheorem}  denote by  $TW(\chi ^{\Delta})$ the homotopy abelian differential graded Lie algebra associated with the inclusion $\chi:L \to M$, with \[
M=\Hom^*( {TW}( \Omega^*_Y (\log \Delta)(\mathcal{V})),  {TW}
\Omega^*_Y (\log \Delta)(\mathcal{V})),
\]
\[
L=\{f\in M \mid
f( {TW}(\Omega^n_Y (\log \Delta)(\mathcal{V})))\subset
 {TW}(\Omega^n_Y (\log \Delta)(\mathcal{V}))\}.
\]
Then,  the composition morphism 
\[
TW( \Theta_X(-\log D) (\mathcal{U})) \to TW(  {\Theta}_Y(-\log \Delta)(\mathcal{V})) 
  \dashrightarrow TW(\chi ^{\Delta}).
\]
is injective in cohomolgy and so by Lemma \ref{lem.criterioquasiabelianita}, the DGLA  $TW( \Theta_X(-\log D) (\mathcal{U})) $ is homotopy abelian.  
 
\end{proof}

\begin{remark}
In the case $m=2$, the results is a consequence of  Theorem \ref{theorem smoothness of  D inside CY}  and Remark \ref{remark no obstruct up implies no obstr down}.
  Indeed, in this case 
the canonical line bundle $K_Y $ of $Y$ is trivial, i.e., Y is a smooth  Calabi Yau variety and so the DGLA associated   with the pair $(Y, \Delta)$  is homotopy abelian.

\end{remark}

%


\begin{remark}
The previous proposition is a generalisation of \cite[Theorem 2.1]{Sano}, avoiding the assumption $H^1(X, \Oh_X)=0$.
Moreover, if $H^1(D, N_{D|X})=0$, then $D$ is stable in $X$, i.e., the forgetting morphism  $\phi: \Def_{(X,D)}\to \Def_X$ is smooth; this implies that the deformations of $X$ are unobstructed, e.g., deformations of weak Fano manifolds are unobstructed     \cite[Theorem 1.1]{Sano}.
\end{remark}


\section{Application to differential graded Batalin-Vilkovisky algebras}\label{section dbv}

If the ground field is $\C$, we already noticed in Example \ref{exa DGLA (X,D) on C}, that the differential graded Lie algebra    $(A_X^{0,*}( \Theta_X(-\log D)),  \debar, [ , ])$ controls   the deformations of the pair $(X,D)$, for $D$ a smooth divisor in a projective smooth manifold $X$.
In \cite {terilla, KKP, BraunLaza}, the authors  use the differential Batalin-Vilkovisky algebras and a degeneration property for these algebras to prove that the associated DGLA is  homotopy abelian.
Using the power of the notion of Cartan homotopy, we can give an alternative proof  of these results and so we provide   alternative proofs, over $\C$, of  Corollary \ref{corollari log calabi yau formal smooth} and Corollary \ref{cor. D in calabi yau no obstruction}.

First of all we recall the fundamental definitions  in this setting, for more details we refer the reader to \cite{bavi,getzler94,KKP}.

\begin{definition}\label{def dbv} 
Let $k$ be a fixed odd integer.
A \emph{differential Batalin-Vilkovisky algebra} (dBV for short) of degree $k$ over $\K$ is the data
$(A, d, \Delta)$, where $(A,d)$ is a differential $\Z$-graded  commutative algebra with unit $1\in A$, 
and $\Delta$ is an operator of degree $-k$, such that $\Delta^2=0$, $\Delta (1)=0$ and 
\begin{multline*}
\Delta(abc)+\Delta(a)bc+(-1)^{\bar{a}\;\bar{b}}\Delta(b)ac+(-1)^{\bar{c}(\bar{a}+\bar{b})}
\Delta(c)ab=\\
=\Delta(ab)c+(-1)^{\bar{a}(\bar{b}+\bar{c})}\Delta(bc)a+(-1)^{\bar{b}\bar{c}}\Delta(ac)b.
\end{multline*}
\end{definition}
The previous equality is often called the seven-term relation.
It is well known \cite{koszul} or  \cite[Section 4.2.2]{KKP}, that given a graded dBV algebra
$(A, d, \Delta)$ of degree $k$, it is canonically defined a  differential graded Lie algebra
$(\g,d,[-,-])$, where: 
$\g=A[k]$, $d_{\g}=-d_A$ and,
\[
[a,b]= (-1)^{p}(\Delta(ab)-\Delta(a)b)-a\Delta(b),\qquad a\in A^p.
\]
\smallskip

Next, let $(A, d, \Delta)$ be a dBV algebra and 
 $t$ a formal central variable of (even) degree $1+k$. Denote by $A[[t]]$ the graded vector space of formal power  series with coefficients in $A$ and by 
by  $A((t))=\bigcup_{p\in\Z} t^pA[[t]]$ the graded vector space of formal Laurent power series.
We have   $d(t)=\Delta(t)=0$ and    $d-t\Delta$ is a well-defined differential  on $A((t))$.

\begin{lemma}\label{lem.cartanhomotopyforDBV} 
In the above notation, the map
 \[\bi\colon \mathfrak{g} \to\Hom^*_{\K}(A((t)),A((t))),\qquad
a\longmapsto \bi_a(b)=\frac{1}{t}ab \]
is a Cartan homotopy.
\end{lemma}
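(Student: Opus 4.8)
The plan is to verify the two defining identities of a Cartan homotopy for the map $\bi_a(b)=\frac{1}{t}ab$, namely $\bi_{[a,b]}=[\bi_a,d_M\bi_b]$ and $[\bi_a,\bi_b]=0$, where $M=\Hom^*_{\K}(A((t)),A((t)))$ and the differential $d_M$ is the commutator with $d-t\Delta$. First I would unwind the bracket structure on the target DGLA: for $\phi,\psi\in M$, the bracket is the graded commutator $[\phi,\psi]=\phi\psi-(-1)^{\bar\phi\,\bar\psi}\psi\phi$, and the differential is $d_M\phi=[d-t\Delta,\phi]$. Since $\bi_a$ has degree $\bar a-k-1$ as an operator (it raises $A$-degree by $\bar a$ and lowers $t$-degree by one, and $t$ has degree $1+k$), I would first record the degrees carefully, as the signs in the verification depend on them.

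The computation of $[\bi_a,\bi_b]=0$ should be the easy half: both $\bi_a$ and $\bi_b$ act by multiplication (by $a/t$ and $b/t$ respectively), so their composition is multiplication by $\pm ab/t^2$, and graded-commutativity of $A$ together with the central, even nature of $t$ forces the graded commutator to vanish once the signs are tracked. The substantive step is the first identity. Here I would compute $d_M\bi_b=[d-t\Delta,\bi_b]$ explicitly as an operator on $A((t))$; the term coming from $d$ will reproduce a multiplication by $db/t$ (using the Leibniz rule for $d$), while the term coming from $-t\Delta$ is where the BV operator $\Delta$ enters and produces, up to sign, the expression $\Delta(b\,\cdot)-\Delta(b)\,\cdot$, i.e.\ precisely the BV bracket $[b,-]$ divided appropriately by powers of $t$. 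Then $[\bi_a,d_M\bi_b]$ composes this with multiplication by $a/t$, and I expect the $1/t^2$ and $1/t$ contributions to organise themselves, after applying the seven-term relation, into exactly $\frac1t[a,b]\,\cdot=\bi_{[a,b]}$, with the BV bracket formula $[a,b]=(-1)^{p}(\Delta(ab)-\Delta(a)b)-a\Delta(b)$ from the excerpt reappearing on the nose.

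The main obstacle will be the bookkeeping of the Koszul signs, since $a$, $b$, $t$, and the operators $\bi_a$, $\Delta$, $d$ all carry nontrivial degrees, and the seven-term relation itself is sign-laden. Concretely, the hard part is checking that the signs produced by moving $a/t$ past $\Delta$ in the composition $\bi_a\circ(d_M\bi_b)$ match the sign $(-1)^p$ appearing in the definition of the DGLA bracket, and that the cross terms which are \emph{not} of the form $\Delta(ab\,\cdot)$ cancel correctly. I would handle this by evaluating every operator on a generic element $c\in A$ (so the identity becomes an equation among elements of $A((t))$), reducing both sides to a common power of $t$, and then invoking the seven-term relation exactly once to match the two-variable and three-variable $\Delta$-terms. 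The identity $\Delta(1)=0$ and $\Delta^2=0$ are used only implicitly, but keeping $1\in A$ and the relation $d(t)=\Delta(t)=0$ in mind guarantees that $d-t\Delta$ is genuinely a differential, which is a prerequisite for $d_M$ to square to zero and hence for the Cartan homotopy formalism to apply.
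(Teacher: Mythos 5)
Your proposal is correct and follows essentially the same route as the paper: both verify $[\bi_a,\bi_b]=0$ as the trivial half, compute $[d-t\Delta,\bi_b]$ explicitly on a test element $c$ so that the $d$-terms cancel by Leibniz and the $-t\Delta$-terms produce the BV-bracket expression, and then invoke the seven-term relation exactly once to conclude. The only detail to keep in view when you carry out the bookkeeping is that the DGLA differential on $\g=A[k]$ is $-d_A$, so the Lie derivative is $\bl_b=[d-t\Delta,\bi_b]-\bi_{db}$ with a minus sign, as the paper notes.
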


\begin{proof}
We have to verify the two conditions of being a Cartan homotopy, given in 
Section \ref{Section cartan homoto}. The former identity $[\bi_a,\bi_b]=0$ is trivial.
As regard the latter $[\bi_a,\bl_b]-\bi_{[a,b]}=0$, we
 recall that  $\bl_b=[d-t\Delta, \bi_b] -\bi_{db}$ (note that  the differential changes sign on the $k$-fold suspension). Moreover, we have the following explicit description
 \[
 \bl_b(c)=-\Delta(bc)+ (-1)^{\bar{b}} b \Delta(c).
 \]
Indeed,
\[
\bl_b(c)= [d-t\Delta, \bi_b](c) -\frac{(db)c}{t}
\]
\[
= (d-t\Delta)(\frac{bc}{t}) - (-1)^{\bar{b}}\bi_b(dc-t\Delta(c))-\frac{(db)c}{t}=
\]
\[ \frac{1}{t}(d(bc) -(-1)^{\bar{b}} b(dc) -(db)c     ) -\Delta(bc)+ (-1)^{\bar{b}} b \Delta(c). \]

Then, 
\[
[\bi_a,\bl_b](c)-\bi_{[a,b]}(c)=
\bi_a(-\Delta(bc)+ (-1)^{\bar{b}} b \Delta(c)) -(-1)^{\bar{a}(\bar{b}+1)}\bl_b(\frac{ac}{t}) -\frac{1}{t}[a,b]c\]
\[=\frac{1}{t}(-a\Delta(bc) + (-1)^{\bar{b}} ab \Delta(c) -(-1)^{\bar{a}(\bar{b}+1)} (-\Delta(bac)+ (-1)^{\bar{b}} b \Delta(ac))  \]
\[-(-1)^{\bar{a}}(\Delta(ab)c-\Delta(a)bc)+a\Delta(b)c)=0.
\]
The last equality  follows from the  the seven-terms relation satisfied by $\Delta$ (multiplying $(-1)^a t$).
\end{proof}

\begin{definition}
A dBV algebra $(A,d,\Delta)$  of degree $k$ has the \emph{degeneration property} if and only if for every $a_0 \in A$,
such that $d a_0=0$, there exists a sequence $a_i$, $i\geq 0$, with $\deg(a_i)=\deg(a_{i-1})-k-1$ and 
such that   
\[
 \Delta a_i= da_{i+1}, \qquad i\geq 0.
\] 
 \end{definition}

\begin{example}\label{example E1 degen implies dbV degener}
Let $(A, d, \Delta)$ be a dBV algebra and suppose that it is bigraded, i.e., $A= \bigoplus _{i,j \geq 0}A^{i,j}$ and 
 $d: A^{i+1,j}\to  A^{i,j}$ and $\Delta: A^{i,j}\to A^{i,j+1}$.
Then, the filtration $F_p = \oplus_{j\geq p}  A^{i,j}$ define a decreasing filtration of the double complex and therefore a spectral sequence.
If this spectral sequence degenerates at the first page $E_1$, then the dBV algebra $(A, d, \Delta)$ has the degeneration property 
\cite[Lemma 1.5]{Morgan}, \cite[Proposition 1.5]{DSV12}.

\end{example}

\begin{example}\label{example A((t)) dbV}

Let $(A, d, \Delta)$ be a dBV algebra.
On the complex $(A((t)), d-t\Delta)$,  consider the filtration $F^\bullet$,  defined by
$F^p=t^pA[[t]]$, for every $p \in \Z$.  Note that $A((t))=\bigcup _{p\in \Z} F^p$ and $F^0=A[[t]]$.
Then, the   dBV algebra $(A, d, \Delta)$ has the degeneration property
if and only if the morphism of complexes $(A[[t]],d-t\Delta) \to (A,d)$, given by $t\mapsto 0$ is surjective in cohomology, if and only if the inclusion of complexes  $(tA[[t]],d-t\Delta) \to  (A[[t]],d-t\Delta)$ is injective in cohomology.
In particular, the degeneration property  implies that the inclusion $F^p \to A((t))$ is injective in cohomology, for every $p$, and so  $A[[t]]\to A((t))$ is also   injective in cohomology.

\end{example}

 \begin{theorem}\label{theorem dbv degener implies homotopy abelian}
Let  $(A,d,\Delta)$ be  a dBV algebra with the degeneration property. Then, the associated DGLA $\g=A[k]
$ is homotopy abelian.
\end{theorem}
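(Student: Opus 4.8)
The plan is to follow the template of the proof of Theorem~\ref{thm.maintheorem}, replacing the logarithmic de Rham complex by the complex $(A((t)),d-t\Delta)$ together with its $t$-adic filtration $F^p=t^pA[[t]]$, and replacing the contraction of forms by the Cartan homotopy $\bi$ of Lemma~\ref{lem.cartanhomotopyforDBV}. Concretely, I would set
\[
M=\Hom^*_{\K}(A((t)),A((t))),\qquad
N=\{f\in M\mid f(A[[t]])\subset A[[t]]\},
\]
and let $\chi\colon N\hookrightarrow M$ be the inclusion. The goal is to produce an $L_\infty$ morphism from $\g$ to the homotopy fibre $TW(\chi)$ that is injective in cohomology, with $TW(\chi)$ homotopy abelian; then Lemma~\ref{lem.criterioquasiabelianita} forces $\g$ to be homotopy abelian.

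First I would check that $TW(\chi)$ is homotopy abelian. By the degeneration property and Example~\ref{example A((t)) dbV}, the inclusion $A[[t]]\hookrightarrow A((t))$ is injective in cohomology; applying the Example following Lemma~\ref{lem.criterio TW abelian} with $W=A((t))$ and $U=A[[t]]$ shows that $\chi$ satisfies the hypothesis of Lemma~\ref{lem.criterio TW abelian}, so $TW(\chi)$ is homotopy abelian. Next, a direct inspection of the formula $\bl_b(c)=-\Delta(bc)+(-1)^{\bar b}b\Delta(c)$ shows that $\bl_b$ commutes with the $t$-degree and hence preserves $A[[t]]$, i.e.\ $\bl(\g)\subset N$; therefore Lemma~\ref{lem.cartan induce morfismo TW} produces the desired $L_\infty$ morphism $\g\stackrel{(\bl,\bi)}{\dashrightarrow}TW(\chi)$. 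Since $\chi$ is injective, Remark~\ref{rem.quasiisoTWcono} identifies $TW(\chi)$ with $(\coker\chi)[-1]=\Hom^*(A[[t]],A((t))/A[[t]])[-1]$, under which the linear part of the morphism becomes $a\mapsto \bar{\bi}_a$, where $\bar{\bi}_a$ is the class of $c\mapsto t^{-1}ac$.

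The hard part will be the final injectivity in cohomology, which is the analogue of the degeneration of the Hodge--to--de Rham spectral sequence used in Theorem~\ref{thm.maintheorem}. The key observations are that $\bi_a$ lowers the filtration by one, so $\bar{\bi}_a$ already takes values in the bottom graded piece $F^{-1}/F^0\cong(A,d)$ (multiplication by $t^{-1}$), and that the unit $1\in A[[t]]$ is a $(d-t\Delta)$-cocycle with $[1]\neq 0$. Evaluation at $1$ is a chain map, and post-composing the $L_\infty$ morphism with it reduces the claim to the injectivity in cohomology of
\[
\psi\colon \g\longrightarrow (A((t))/A[[t]])[-1],\qquad a\longmapsto t^{-1}a\ \ (\mathrm{mod}\ A[[t]]).
\]
I would then factor $\psi$ as the isomorphism $\g\cong F^{-1}/F^0$ (up to sign) followed by the inclusion $F^{-1}/F^0\hookrightarrow A((t))/A[[t]]$, and prove that this inclusion is injective in cohomology by a snake-lemma comparison of the short exact sequences $0\to F^0\to F^{-1}\to F^{-1}/F^0\to 0$ and $0\to F^0\to A((t))\to A((t))/A[[t]]\to 0$: here Example~\ref{example A((t)) dbV} guarantees that $H^*(F^p)\to H^*(A((t)))$ is injective for every $p$, which makes both rows short exact on cohomology and forces the induced map $H^*(F^{-1}/F^0)\to H^*(A((t))/A[[t]])$ to be injective. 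This establishes that $(\bl,\bi)$ is injective in cohomology and completes the argument.
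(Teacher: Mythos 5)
Your proposal is correct and follows essentially the same route as the paper's own proof: the same pair $\chi\colon N\hookrightarrow M$ acting on $(A((t)),d-t\Delta)$, the same use of Lemmas~\ref{lem.criterio TW abelian} and~\ref{lem.cartan induce morfismo TW}, the reduction via evaluation at $1\in A[[t]]$, and the final comparison of the two short exact sequences involving $F^0$, $F^{-1}$ and $A((t))$ using the degeneration property. No substantive differences.
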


\begin{proof} 
According to the previous Lemma \ref{lem.cartanhomotopyforDBV}, it is well defined a Cartan homotopy
 $\bi\colon \mathfrak{g} \to\Hom^*_{\K}(A((t)),A((t)))$, whose associated Lie derivative has the following explicit expression
\[\bl_b(c)=-\Delta(bc)+ (-1)^{\bar{b}} b \Delta(c).
\]
 Therefore, considering the filtration $F^p=t^pA[[t]]$ of the  complex $(A((t)), d-t\Delta)$ as  in Example \ref{example A((t)) dbV},  we note that
\[\bi: \g \to \Hom^*(F^p,F^{p-1}) \qquad \mbox{and} \qquad  \bl: \g \to \Hom^*(F^p,F^p),\quad \forall \ p.\]
Next, consider the differential graded Lie algebra
\[
M=\Hom^*_{\K}(A((t)),A((t))),
\]
 the sub-DGLA
\[
N=\{\varphi \in M \mid \varphi(A[[t]]) \subset A[[t]]  \},
\]
and let $\chi\colon N \to M$ be the inclusion. Since $\bl(\g) \subset N$, according to
Lemma \ref{lem.cartan induce morfismo TW}, there exists  an induced  $L_{\infty}$-morphism $\psi:\g \dashrightarrow TW(\chi)$.

As observed in the Example  \ref{example A((t)) dbV}, the degeneration property  implies that the inclusion $A[[t]]\to A((t))$ is  injective in cohomology. Therefore,  the
DGLA $TW(\chi)$ is  homotopy abelian by Lemma \ref{lem.criterio TW abelian}. According to Lemma \ref{lem.criterioquasiabelianita}, to conclude the proof it is enough to show that $\psi$ induces an injective  morphism in cohomology.

As observed in Remark \ref{rem.quasiisoTWcono},   $TW(\chi)$ is quasi-isomorphic to
\[\coker(\chi)[-1]=\Hom^*_{\K}\left(A[[t]], \frac{A((t))}{A[[t]]}\right)[-1];
\]
therefore, it is sufficient to prove that the  morphism of complexes 
\[\bi\colon A\to \Hom^*_{\K}\left(A[[t]], \frac{A((t))}{A[[t]]}\right)[-k-1]\]
is injective in cohomology. It is actually enough 
 to prove the injectivity for the composition with the evaluation at $1\in A[[t]]$, i.e.,    the map 
\[  A\to \frac{A((t))}{A[[t]]},\qquad a\mapsto \frac{a}{t},\]  
is injective in cohomology. 
Note that this is equivalent  to the statement that the inclusion 
\[ \frac{F^{-1}}{F^0}\hookrightarrow \frac{A((t))}{F^0}\]
is injective in cohomology, since 
the map $a\mapsto \dfrac{a}{t}$ defines an isomorphism of DG-vector spaces $A\to F^{-1}/F^0$. The claim follows considering  the  short exact sequences  
\[ \xymatrix{ 0\ar[r]& F^0\ar[r]\ar@{=}[d]& F^{-1} \ar[d]_{j}\ar[r]& \dfrac{F^{-1}}{F^0}  \ar[d]\ar[r]&0\\
0\ar[r]& F^0\ar[r]& A((t)) \ar[r]& \dfrac{A((t))}{F^0}  \ar[r]&0\\
}
\]
and keeping in mind that the inclusion $j$ is injective in cohomology by the degeneration property   (Example  \ref{example A((t)) dbV}). 
\end{proof}

\begin{remark}
 The original proof of this theorem   (for $k=1$)  can be found in \cite[Theorem 1]{terilla} or \cite[Theorem 4.14]{KKP}. This proof was suggested to the author by Marco Manetti.
\end{remark}

\begin{example}  
\cite[Theorem 4.18]{KKP} Let X  be a compact  projective Calabi Yau variety of dimension $n$ over $\C$.
In this situation,  the relevant dBV algebra is $(A, d, \Delta)$ with
 $A= \Gamma  (X, \sA_X^{0,*}(  \wedge^\bullet \Theta_X))$, 
  $d= \debar$ and $\Delta =div_\omega= {i _ \omega}^{-1} \circ \de \circ  {i }_ \omega$. Here $\omega$ is a non vanishing section of
$\Omega^n_X$   and ${i }_ \omega:  \wedge^\bullet \Theta_X \to \Omega^{n-\bullet}_X   $ is the isomorphism given by the contraction with $\omega$. The contraction ${i }_ \omega$
gives an isomorphism of bicomplexes between the dBV algebra $(A, d, \Delta)$ 
and the   Dolbeault bicomplex $(A^{*,*}(X), \debar,\de)$. According to Example \ref{example E1 degen implies dbV degener}, the degeneration of the Hodge-to-de Rham   spectral sequence implies that $(A, d, \Delta)$ has the degeneration property.
Therefore, Theorem \ref{theorem dbv degener implies homotopy abelian} implies that  the associated DGLA  $L= \Gamma  (X, \sA_X^{0,*}(  \wedge^\bullet \Theta_X))$, 
 is homotopy abelian. The Kodaira Spencer DGLA of $X$  $\Gamma  (X, \sA_X^{0,*}(  \Theta_X))$
 is embedded in L and it is also an embedding in cohomology. According to Lemma \ref{lem.criterioquasiabelianita}, the Kodaira Spencer DGLA  is also homotopy abelian and   the deformations of $X$ are unobstructed.
\end{example}

\begin{example}

\cite[Section 4.3.3 (i)]{KKP}
 Let $X$ be a smooth projective n dimensional  variety over $\C$ and $D$ a smooth divisor, such that $\Omega^n(\log D)$ is trivial.
 In this case, the relevant dBV algebra is $(A, d, \Delta)$ with $A= \Gamma (X, \sA_X^{0,*} (\wedge^\bullet \Theta_X(-\log D) )$
 $d= \debar$ and $\Delta =div_\omega= {i _ \omega}^{-1} \circ \de \circ  {i }_ \omega$. Here $\omega$ is the non vanishing section of $\Gamma(X, \Omega^n_X(\log D))$ and ${i }_ \omega:  \wedge^\bullet \Theta_X(-\log D) \to \Omega^{n-\bullet}_X(\log D))   $ is the isomorphism given by the contraction with $\omega$. The map ${i }_ \omega$ identifies $(A,d, \Delta)$  with the logarithmic Dolbeault bicomplex $(A^{*,*}(\log D), \debar,\de)$. 
 Arguing as in the previous example and using the degeneration of the spectral sequence of Theorem \ref{teo degen deligne}, we can conclude that the DGLA   $(A_X^{0,*}( \Theta_X(-\log D)),  \debar, [ , ])$   is homotopy abelian and so the deformations of the pair $(X,D)$ are unobstructed  \cite[Lemma 4.19]{KKP}.


\end{example}

\begin{example}

\cite[Section 4.3.3 (ii)]{KKP}
 Let $X$ be a smooth projective n-dimensional Calabi Yau variety over $\C$ and $D$ a smooth divisor. In this case the relevant dBV  algebra  $(A, d, \Delta)$ is similar to the one introduced in  the previous example, indeed 
 $A= \Gamma (X, \sA_X^{0,*} (\wedge^\bullet \Theta_X(-\log D) )$ 
 $d= \debar$ and $\Delta =div_\omega= {i _ \omega}^{-1} \circ \de \circ  {i }_ \omega$. Here $\omega$ is a  non vanishing section of
$\Omega^n_X$. The degeneration of the spectral sequence of Theorem \ref{teo degen tensor} implies that 
 $(A, d, \Delta)$ has the degeneration property and so that      $(A_X^{0,*}( \Theta_X(-\log D)),  \debar, [ , ])$  is homotopy abelian  \cite[Lemma 4.20]{KKP}.
\end{example}

\end{document}